\documentclass[12pt,reqno]{amsart}

\usepackage[T1]{fontenc}
\usepackage{newpxtext}                     
\usepackage[varg,bigdelims]{newpxmath}     
\usepackage[mathscr]{eucal}                
\usepackage{bm}
\usepackage{microtype}

\usepackage[a4paper,textwidth=16cm,textheight=22cm]{geometry}
\usepackage{setspace}

\usepackage{amsmath,mathtools}     
\usepackage{graphicx}
\usepackage{xcolor}
\usepackage{ytableau}
\usepackage{stackengine}
\usepackage{enumitem}
\usepackage{tikz-cd}
\usetikzlibrary{arrows.meta}

\usepackage[backend=biber,style=alphabetic,maxbibnames=99, isbn=false, doi=false, url=false]{biblatex}
\usepackage[colorlinks,
            linkcolor={red!55!black},
            citecolor={green!40!black},
            urlcolor={blue!60!black}]{hyperref}
\usepackage[capitalise,noabbrev]{cleveref}

\tikzset{
  bulletepi/.style={
    {Circle[fill=black, scale=0.7]}-{Straight Barb[] Straight Barb[]}
  }
}

\theoremstyle{plain}
\newtheorem{theorem}{Theorem}[section]
\newtheorem{corollary}[theorem]{Corollary}

\newtheorem{lemma}[theorem]{Lemma}
\newtheorem{proposition}[theorem]{Proposition}

\newtheorem*{lemma*}{Technical Lemma}

\theoremstyle{definition}
\newtheorem{defn}[theorem]{Definition}
\newtheorem{example}[theorem]{Example}

\theoremstyle{remark}
\newtheorem{rem}[theorem]{Remark}

\theoremstyle{plain}

\newcommand{\crys}{\mathcal{B}}
\newcommand{\mcdw}{\mathcal{D}_{\!\lambda\mu}^{\nu}(w)}
\newcommand{\mcd}{\mathcal{D}_{\!\lambda\mu}^{\nu}}
\newcommand{\mcmw}[1][w]{\mathcal{M}_{\!\lambda\mu}^{\nu}(#1)}
\newcommand{\posreal}[1][+]{\Delta_{re}^{#1}}
\newcommand{\posimag}{\Delta_{im}^+}

\newcommand{\cato}{\mathcal{O}}
\newcommand{\catoint}{\mathcal{O}^{\mathrm{int}}}
\newcommand{\lie}{\mathfrak}
\newcommand{\twoheadrightarrowtail}{%
  \mathrel{\mathrlap{\scalebox{0.75}{\textbullet}}\twoheadrightarrow}}
\newcommand{\canmor}{\twoheadrightarrowtail}
\newcommand{\lrw}{c_{\lambda\mu}^\nu(w)}
\newcommand{\lr}{c_{\lambda\mu}^\nu}
\newcommand{\hwcrys}[1][]{{b}_{#1}}

\DeclareMathOperator{\ann}{ann}
\DeclareMathOperator{\res}{res}
\DeclareMathOperator{\Res}{Res}
\DeclareMathOperator{\Hom}{Hom}

\begin{document}

\title{Kostant--Kumar modules: presentation and multiplicities}

\author{Manika Gupta}
\address{The Institute of Mathematical Sciences, Chennai, India}
\address{Homi Bhabha National Institute, Training School Complex, Anushakti Nagar, Mumbai 400094, India}
\email{manikag@imsc.res.in}

\author{K.~N. Raghavan}
\address{Krea University, Sri City, Andhra Pradesh, India}
\email{raghavan.komaranapuram@krea.edu.in}

\author{Sankaran Viswanath}
\address{The Institute of Mathematical Sciences, Chennai, India}
\address{Homi Bhabha National Institute, Training School Complex, Anushakti Nagar, Mumbai 400094, India}
\email{svis@imsc.res.in}

\keywords{Kostant-Kumar module, tensor products, Shapovalov form, presentation, Schur positivity}
\subjclass[2020]{17B10,17B67}
\thanks{MG and SV acknowledge partial support through a DAE Apex grant to the Institute of Mathematical Sciences.}

\begin{abstract}
Kostant--Kumar modules $K(\lambda,w,\mu)$ are submodules of a tensor
product $V(\lambda)\otimes V(\mu)$ of irreducible highest weight modules
over a symmetrizable Kac--Moody algebra, indexed by Weyl group elements
$w$; their decomposition numbers $c^\nu_{\lambda\mu}(w)$ refine ordinary
tensor product multiplicities. We study them module-theoretically. We show
that $c^\nu_{\lambda\mu}(w)$ is computed by a natural quotient of the
Kostant--Parthasarathy--Ranga Rao--Varadarajan multiplicity space, via
orthogonal projection onto a Demazure module. For $\mathfrak{g}$
finite-dimensional semisimple or symmetric Kac--Moody, we present
$K(\lambda,w,\mu)$ by generators and relations, extending the presentation
of Demazure modules due to Joseph, Polo and Mathieu. 
We apply the presentation to obtain upper bounds on
$c^\nu_{\lambda\mu}(w)$ and to study Schur positivity.
\end{abstract}

\maketitle

\section{Introduction}

Let $\lie g$ be a symmetrizable Kac--Moody Lie algebra over $\mathbb{C}$. 
For a dominant integral weight $\lambda$, let $V(\lambda)$ denote the
irreducible highest weight $\lie g$-module of highest weight $\lambda$. 
Consider the tensor product $V(\lambda) \otimes V(\mu)$ of two irreducible modules. Kostant--Kumar modules are submodules of this tensor product, indexed by elements $w$ of the Weyl group of $\lie g$:
\[
    K(\lambda,w,\mu)\;:=\;\lie{Ug}\,(v_\lambda\otimes v_{w\mu})
    \;\subseteq\; V(\lambda)\otimes V(\mu),
\]
where $v_\lambda$ and $v_{w\mu}$ are nonzero vectors of weights $\lambda$
and $w\mu$ in $V(\lambda)$ and $V(\mu)$ respectively. These first arose in the context of the Parthasarathy-Ranga Rao-Varadarajan (PRV) conjecture and its strengthening due to Kostant which asserted that the irreducible representation $V(\overline{\lambda+w\mu})$ occurs with multiplicity 1 in $K(\lambda,w,\mu)$; here $\overline{\lambda+w\mu}$ denotes the unique dominant integral weight in the Weyl group orbit of $\lambda+w\mu$. We refer the reader to \cite{apoorva-prv} for a lucid survey.

Kostant--Kumar modules form a natural family interpolating between the
Cartan component and the full tensor product. At the two extremes one has
\[
    K(\lambda,1,\mu)=V(\lambda+\mu)
    \qquad\text{and}\qquad
    K(\lambda,w_0,\mu)=V(\lambda)\otimes V(\mu),
\]
the latter in finite type, with $w_0$ being the longest Weyl group element. 
The submodules $\{K(\lambda,w,\mu)\}_{w \in W}$ form an increasing filtration of
$V(\lambda)\otimes V(\mu)$ indexed by the Bruhat poset on the double coset space
$W_\lambda\backslash W/W_\mu$, where $W_\lambda, W_\mu$ denote the stabilizers of $\lambda, \mu$ respectively. Writing $\lrw$ (resp. $\lr$) for the multiplicity of
$V(\nu)$ in $K(\lambda,w,\mu)$ (resp. $V(\lambda) \otimes V(\mu)$), one obtains a family of \emph{$w$-refined}
tensor product multiplicities. The $\lrw$ are monotone along the
Bruhat order and recover the classical $\lr$ for all $w$ of large enough length.

The refined coefficients $\lrw$ have so far been studied chiefly through
combinatorics. Formulas in terms of Littelmann's path model and
Kashiwara's crystals go back to Littelmann \cite{L1,L2} and
Joseph \cite{J2}, and were investigated in detail in \cite{KRV}. Our
aim in this paper is to develop the complementary \emph{module-theoretic} approach to
 Kostant--Kumar modules: to describe their multiplicity spaces
intrinsically, to present them by generators and relations, and to study
the natural maps between them, with applications to Schur positivity.
While many of our results are valid for all symmetrizable
Kac--Moody algebras, the presentation theorem and its consequences require
$\lie g$ to be finite-dimensional semisimple or a
Kac--Moody algebra with symmetric generalized Cartan matrix; this restriction is inherited from the
"existence of excellent-filtrations" results on which the arguments rest, and is expected
to be removable.

We now describe the main results in more detail, referring the reader to the main text of the paper for unexplained notation.

\medskip
\noindent\textbf{I. Multiplicity spaces.}
In finite type, a classical construction due to Parthasarathy-Ranga Rao-Varadarajan and Kostant realizes tensor-product multiplicities inside a single
irreducible: the space
\[
    \mcd \;=\; \bigl\{\,v\in V(\mu)_{\nu-\lambda}\;:\;
    e_i^{\,\langle\lambda,\alpha_i^\vee\rangle+1}\,v=0\ \text{ for all simple roots } \alpha_i\,\bigr\} \;\subseteq V(\mu)
\]
has dimension $\lr$. This was extended to all symmetrizable Kac--Moody algebras
by Jeralds and Kumar \cite{jeralds-kumar}.

Our first main result identifies an analogous space computing the refined
multiplicity $\lrw$. Let $V_w(\mu)=\lie{Ub}\,v_{w\mu}\subseteq V(\mu)$ be the
Demazure module, and let $\pi_w\colon V(\mu)\twoheadrightarrow V_w(\mu)$ be
the orthogonal projection with respect to the Shapovalov (contravariant)
form. Then
$
    \mcdw \;:=\; \pi_w(\mcd)
    \qquad\text{satisfies}\qquad
    \dim\mcdw=\lrw
$ 
(Theorem~\ref{thm:shap-proj-w}). It is worth emphasizing that $\mcdw$ is
realized as a \emph{quotient} of $\mcd$, not as a subspace: the naive
guess $\mcdw\stackrel{?}{=}\mcd\cap V_w(\mu)$, suggested by the crystal-side
picture, is already false for $\mathfrak{sl}_3$. 

\medskip
\noindent\textbf{II. A presentation by generators and relations.}
Our second main result is an explicit presentation of Kostant--Kumar
modules. Its natural predecessor is the presentation of Demazure modules
due to Joseph \cite{J1}, Polo \cite{P1} and Mathieu \cite{M1},
which realizes $V_w(\mu)$ as $\lie{Ub}/I$ for an explicit left ideal $I$ of
$\lie{Ub}$ generated by suitable elements of the Cartan subalgebra and powers of raising operators. Passing from the Borel to the full algebra --- and from the Demazure module to the
Kostant--Kumar module --- amounts to adjoining lowering relations. We prove
(Theorem~\ref{thm:gen-rel}) that, for $\lie g$ finite-dimensional semisimple
or symmetric Kac--Moody,
\[
    K(\lambda,w,\mu)\;\cong\;\lie{Ug}/J,
\]
where $J=\ann_{\lie{Ug}}(v_\lambda\otimes v_{w\mu})$ is generated by the
explicit raising relations $e_\alpha^{\,p_\alpha(w\mu)+1}$
($\alpha\in\posreal$) and ${\lie g}_\beta$ ($\beta\in\posimag$), the lowering relations
$f_\alpha^{\,\langle\lambda,\alpha^\vee\rangle+q_\alpha(w\mu)+1}$
($\alpha\in\posreal$) and the
Cartan relations $h-\langle\lambda+w\mu,h\rangle\,1$. At the extremes $w=1$
and $w=w_0$ (finite type) these recover, respectively, the standard presentation of the
Cartan component $V(\lambda+\mu)$ and the classical
  relations for the tensor product \cite{GE}, \cite{PRV}.

The key tool, of independent interest, is the isomorphism
\[
    \res\colon\ \Hom_{\lie{Ug}}\bigl(K(\lambda,w,\mu),V(\nu)\bigr)
    \;\xrightarrow{\ \sim\ }\;
    \Hom_{\lie{Ub}}\bigl(v_\lambda\otimes V_w(\mu),V(\nu)\bigr)
\]
(Theorem~\ref{thm:UbUg}). For
finite-dimensional $\lie g$, this is a theorem of Kumar \cite{Kum1}; we extend it to
symmetric Kac--Moody algebras using the Polo-Mathieu-Joseph excellent filtrations. In the 
symmetric finite-dimensional case (i.e., the ADE types) this gives an
algebraic alternative to Kumar's geometric argument.

\medskip
\noindent\textbf{III. Tensor envelopes, canonical morphisms, and Schur
positivity.}
Retaining only the simple-root generators of $J$ yields a smaller ideal
$J_s\subseteq J$ and a module $T(\lambda,w,\mu):=\lie{Ug}/J_s$, the
\emph{tensor envelope} of $K(\lambda,w,\mu)$. In finite type it is again a
tensor product of irreducibles,
\[
    T(\lambda,w,\mu)\;\cong\;V\!\bigl(\lambda+(w\mu)^+\bigr)\otimes V\!\bigl((w\mu)^-\bigr)^{*},
\]
and surjects canonically onto $K(\lambda,w,\mu)$
(Proposition~\ref{prop:tensenv}). This yields a $w$-dependent upper bound
on refined multiplicities by ordinary tensor-product multiplicities which is often
sharper than the naive bound $\lrw\le\lr$.

The maps between Kostant--Kumar modules that send cyclic generator to
cyclic generator --- which we call \emph{canonical morphisms} -- are governed by a
simple criterion in terms of the presenting ideals
(Proposition~\ref{prop:canmor}), from which we recover and extend a
surjection of Mathieu \cite{M2}. Finally we turn to Schur positivity in the
sense of the existence of a $\lie{Ug}$-surjection
$K(\lambda_1,w_1,\mu_1)\twoheadrightarrow K(\lambda_2,w_2,\mu_2)$. We give a
necessary condition (Proposition~\ref{prop:schurpos-kk-nec}) and a
sufficient condition of ``$\sigma$-twisted'' type (Proposition~\ref{prop:sigma}). These
Weyl-twisted maps do not, however, account for all known scenarios in which Schur positivity is known: for
instance in type $A$, the ``sort'' inequalities of Lam, Postnikov and
Pylyavskyy \cite{LPP} produce Schur-positive differences with no
corresponding surjection of twisted type.

\medskip
\noindent\textbf{Organization of the paper:}
Section~\ref{sec:prelims} fixes notation and recalls the Shapovalov form and its
basic properties.
The multiplicity results of Part~I are proved in
Section~\ref{sec:multlrw}. Part~II occupies Section~\ref{sec:genrelkk}: the
presentation theorem is stated in Section~\ref{sec:mainthmgenrel} and proved through a
sequence of reductions culminating in Section~\ref{sec:proofends}. Tensor envelopes,
canonical morphisms and the Schur-positivity results of Part~III are
developed in Sections~\ref{sec:tensenvelope}--\ref{sec:canmors}.

\medskip
\noindent\textbf{Tool and computational resource disclosure:} LLMs were used during the writing process to polish sentences, find typos and check references. The mathematics in this paper was entirely human-produced, with no involvement of AI tools at any stage. Sagemath \cite{sagemath} was employed to compute examples and build intuition for Theorem~\ref{sec:mainthmshap}.

\section{Preliminaries}\label{sec:prelims}
Unless stated otherwise, the notations of this section will be used throughout.

\subsection{} Let $\mathfrak g$ be a symmetrizable Kac--Moody Lie algebra, with triangular decomposition $\mathfrak{g}=\mathfrak{n}^{-}\oplus \mathfrak{h} \oplus\mathfrak{n}^{+}$. Let $\mathfrak{b}=\mathfrak h\oplus \mathfrak{n}^+$ be its Borel subalgebra. Let $\alpha_i, \,i=1,2,\ldots,n$ be the simple roots of $\mathfrak{g}$. Let $\Delta^+$ and $\Delta^-$ be the sets of positive and negative roots respectively and $\posreal, \,\posimag$ denote the sets of positive real and imaginary roots. For each root $\beta$, let ${\lie g}_\beta$ denote the corresponding root space of ${\lie g}$. Given $\alpha \in \posreal$, we let $e_\alpha, f_\alpha$ denote nonzero elements of ${\lie g}_\alpha$ and ${\lie g}_{-\alpha}$ respectively.

Let $Q^+ = \mathbb{Z}_{\geq 0} (\Delta^+)$ denote the positive root lattice. Let $W$ denote the Weyl group of $\lie g$, generated by the simple reflections $s_i$ for $i=1, \ldots, n$. The set of weights is denoted by $P$ and the set of dominant weights by $P^+$. The set $X = WP^+$ comprising $W$-conjugates of dominant weights is the Tits cone of $\lie g$ (or more precisely the set of integral weights in the Tits cone). Given a weight $\gamma$ in $X$, we let $\overline{\gamma}$ denote the unique dominant element in $W\gamma$ (its $W$-orbit). For a dominant weight $\lambda,$ let $V(\lambda)$ be the irreducible integrable module of highest weight $\lambda$.

Let $\geq$ denote the usual partial order on $P$, defined via $\alpha \geq \beta$ if $\alpha - \beta \in Q^+$. We will also let $\sigma \geq \tau$ to denote the Bruhat order on $W$ for $\sigma, \tau \in W$. When $\lie g$ is finite dimensional, we let $w_0$ denote the longest element of $W$.

The Demazure module $V_w(\mu)$ is defined to be the $\mathfrak{Ub}$-submodule of $V(\mu)$ generated by a vector $v_{w\mu}$ of weight $w\mu$. If the generalized Cartan matrix (GCM) of $\lie g$ is symmetric, we will refer to $\lie g$ as a {\em symmetric} Kac--Moody algebra.

\subsection{}
Let $\lie g$ be a symmetrizable Kac--Moody algebra and let $\lambda, \mu \in P^+$. Kostant--Kumar modules are defined as $\mathfrak{Ug}$-submodules of the tensor product $V=V(\lambda)\otimes V(\mu)$, generated by a single vector of the form $v_\lambda\otimes v_{w\mu},$ where $w\in W,$ the Weyl group of the Lie algebra, and $v_\lambda$ is a non-zero vector of weight $\lambda$ in $V(\lambda)$ and $v_{w\mu}$ is a non-zero vector of weight $w\mu$ in $V(\mu).$ They are denoted by \[K(\lambda,w,\mu)=\mathfrak{Ug}(v_{\lambda}\otimes v_{w\mu}).\]

For $\lambda \in P^+$, let $W_\lambda$ denote the stabilizer of $\lambda$ in $W$. We recall the following properties from \cite{KRV}, \cite{KRV2}:
\begin{proposition}\label{prop:kkprops}
Let $w, \sigma, \tau \in W$ and  $\lambda, \mu \in P^+$.
\begin{enumerate}
\item $K(\lambda,w,\mu) = \mathfrak{Ug}(v_{\lambda}\otimes V_w(\mu))$.
\item $K(\lambda, w, \mu) = \lie{Ug}(v_{\sigma \lambda} \otimes v_{\sigma w\mu})$ for all $\sigma \in W$.
\item  $K(\lambda,\sigma,\mu) = K(\lambda,\tau,\mu)$ if $W_\lambda \sigma W_\mu = W_\lambda \tau W_\mu$.
\item $K(\lambda,\sigma,\mu) \subseteq K(\lambda,\tau,\mu)$  if $W_\lambda \sigma W_\mu \leq W_\lambda \tau W_\mu$ in the Bruhat poset  $W_\lambda \backslash W /W_\mu$ of double cosets.
\item $K(\lambda,1,\mu) \cong V(\lambda+\mu)$ and when $\lie g$ is finite-dimensional, $K(\lambda,w_0,\mu) = V(\lambda) \otimes V(\mu) $.
\end{enumerate}
\end{proposition}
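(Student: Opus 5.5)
We prove the five items in order, each resting on two facts. First, for $\sigma\in W$ the extremal weight space $V(\mu)_{\sigma\mu}$ is one-dimensional. Second, the lift $\dot\sigma$ of $\sigma$ to the integrable group (a product of $\exp(f_i)\exp(-e_i)\exp(f_i)$) acts on every integrable module, is compatible with tensor products, and maps $V(\mu)_{\gamma}$ onto $V(\mu)_{\sigma\gamma}$.

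For (1), the inclusion $\supseteq$ is the only one needing proof. Since $v_\lambda$ is a highest weight vector, for $x\in\lie n^+$ we have $x(v_\lambda\otimes u)=v_\lambda\otimes xu$, and $\lie h$ acts diagonally. Hence $v_\lambda\otimes \lie{Ub}\,v_{w\mu}=\lie{Ub}(v_\lambda\otimes v_{w\mu})\subseteq K(\lambda,w,\mu)$, which gives (1). For (2), $K(\lambda,w,\mu)$ is an integrable $\lie{Ug}$-submodule, so it is stable under $\dot\sigma$. Moreover $\dot\sigma(v_\lambda\otimes v_{w\mu})=\dot\sigma v_\lambda\otimes\dot\sigma v_{w\mu}$ is a nonzero multiple of $v_{\sigma\lambda}\otimes v_{\sigma w\mu}$, by one-dimensionality of extremal weight spaces. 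This gives one inclusion. The reverse inclusion follows by applying the same argument with $\sigma^{-1}$, so $\dot\sigma$ carries one cyclic module onto the other and they coincide.

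For (3), write $\tau=u\sigma u'$ with $u\in W_\lambda$ and $u'\in W_\mu$. Then $v_{\tau\mu}$ is proportional to $v_{u\sigma\mu}$ and $v_\lambda$ is proportional to $v_{u\lambda}$, so (2) with $u$ in place of $\sigma$ gives $K(\lambda,\tau,\mu)=\lie{Ug}(v_{u\lambda}\otimes v_{u\sigma\mu})=K(\lambda,\sigma,\mu)$.

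For (4), first reduce with (3) to $\sigma\le\tau$ in the Bruhat order on $W$, by choosing suitable representatives; this is the standard property of the Bruhat order on double cosets, via minimal representatives. Then use the Demazure inclusion $V_\sigma(\mu)\subseteq V_\tau(\mu)$ for $\sigma\le\tau$. This inclusion holds because a reduced word for $\tau$ contains a reduced subword for $\sigma$, and $V_{s_iw}(\mu)=\lie{U}(\lie{sl}_2^{(i)})V_w(\mu)$-type stability shows Demazure modules increase along subwords. Combining this with (1) gives $K(\lambda,\sigma,\mu)=\lie{Ug}(v_\lambda\otimes V_\sigma(\mu))\subseteq K(\lambda,\tau,\mu)$.

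For (5), the case $w=1$ goes as follows. The vector $v_\lambda\otimes v_\mu$ is a highest weight vector of weight $\lambda+\mu$, and the module it generates is integrable. An integrable highest weight module over a symmetrizable Kac--Moody algebra is irreducible, by complete reducibility in category $\mathcal O^{\mathrm{int}}$. Hence $K(\lambda,1,\mu)\cong V(\lambda+\mu)$.

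For $w=w_0$, by (1) we have $K=\lie{Ug}(v_\lambda\otimes V(\mu))$, since $V_{w_0}(\mu)=V(\mu)$. We show $V(\lambda)\otimes u\subseteq K$ for all $u\in V(\mu)$ by induction on $\lie{Un}^-$-degree in the first factor. If $y\in\lie n^-$, then $y(v\otimes u)=yv\otimes u+v\otimes yu$, so $yv\otimes u$ lies in $K$ whenever $v\otimes u$ and $v\otimes yu$ do. Inducting on the length of a monomial applied to $v_\lambda$, for all $u$ simultaneously, gives $V(\lambda)\otimes V(\mu)\subseteq K$.

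The main obstacle is (4). It needs both the reduction from the double coset order to the Bruhat order on representatives and the monotonicity of Demazure modules. Both are standard, and I would cite them from \cite{KRV} rather than reprove them.
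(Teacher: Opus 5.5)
Your proof is correct. The paper does not prove this proposition itself; it only recalls the five properties from \cite{KRV}, \cite{KRV2}. So there is no in-paper argument to compare against, and what you have written supplies the standard proofs.

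Items (1), (2), (3) and both halves of (5) are complete as written. For $w=1$ you implicitly use that a highest weight module is indecomposable, since its one-dimensional top weight space generates it. Together with complete reducibility in $\catoint$, this gives irreducibility.

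Your induction for $w=w_0$ uses finite-dimensionality only through $V_{w_0}(\mu)=V(\mu)$. It therefore shows $\mathfrak{Ug}(v_\lambda\otimes V(\mu))=V(\lambda)\otimes V(\mu)$ for every symmetrizable $\mathfrak g$. This is exactly the step asserted as ``easy to see'' in the proof of Proposition~\ref{prop:pi0-image}.

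The only loosely worded step is the Demazure monotonicity in (4). The precise fact you want is
\[
V_{s_iw}(\mu)=\sum_{n\ge 0} f_i^n\,V_w(\mu)=\mathfrak U(\mathfrak p_i)\,V_w(\mu)\qquad\text{whenever } s_iw>w,
\]
where $\mathfrak p_i=\mathfrak b\oplus\mathbb C f_i$. The paper itself uses this identity in proving the double-coset invariance of $\mcd/(\mcd\cap V_w(\mu)^\perp)$ in Section~\ref{sec:multlrw}. Iterating along a reduced word $\tau=s_{i_1}\cdots s_{i_k}$ gives $V_\tau(\mu)=\mathfrak U(\mathfrak p_{i_1})\cdots\mathfrak U(\mathfrak p_{i_k})\,v_\mu$. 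Each factor contains $1$, so a reduced subword for $\sigma$ yields $V_\sigma(\mu)\subseteq V_\tau(\mu)$.

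Your reduction, via (3), from the double-coset order to the Bruhat order on $W$ works under either standard definition of the double-coset order: via minimal representatives, or via existence of comparable representatives.
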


The following is Kostant's strengthening of the PRV conjecture and was established independently by Kumar \cite{Kum1} and Mathieu \cite{M1}.
\begin{theorem}\label{thm:kprv} (Kumar, Mathieu) 
Let $\lie g$ be a symmetrizable Kac--Moody algebra and $\lambda, \mu \in P^+$, $w \in W$. Then $V(\overline{\lambda+w\mu})$ occurs with multiplicity 1 in the irreducible decomposition of $K(\lambda,w,\mu)$. 
\end{theorem}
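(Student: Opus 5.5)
Since this theorem of Kumar and Mathieu is quoted rather than proved in the paper, I will outline the algebraic route in Mathieu's spirit. The goal has two parts: show that $V(\nu)$, with $\nu=\overline{\lambda+w\mu}$, occurs in $K(\lambda,w,\mu)$ at least once, and show that it occurs at most once.

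\emph{Reduction.} First I use Proposition~\ref{prop:kkprops}(2) to replace the generator by $v_{\sigma\lambda}\otimes v_{\sigma w\mu}$. I choose $\sigma\in W$ with $\sigma(\lambda+w\mu)=\nu$ dominant. The key point is that $\nu$ is an extremal weight of $V(\lambda)\otimes V(\mu)$. Indeed, $\sigma\lambda$ and $\sigma w\mu$ are extremal weights of the two factors. Moreover, for every real root $\alpha$ with $\langle\nu,\alpha^\vee\rangle\ge 0$, both $\langle\sigma\lambda,\alpha^\vee\rangle$ and $\langle\sigma w\mu,\alpha^\vee\rangle$ can be arranged to be $\ge 0$; this is the standard choice of $\sigma$ minimal in its coset, which one checks via the Tits cone. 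Consequently $v_{\sigma\lambda}\otimes v_{\sigma w\mu}$ is killed by every $e_i$, because each $e_i$ kills both extremal vectors $v_{\sigma\lambda}$ and $v_{\sigma w\mu}$ when their weights pair nonnegatively with $\alpha_i^\vee$. The generator is therefore a highest weight vector of weight $\nu$. Since $V(\lambda)\otimes V(\mu)$ lies in the integrable category $\mathcal O$ and is completely reducible, this vector generates a copy of $V(\nu)$ inside $K(\lambda,w,\mu)$. This proves the lower bound, multiplicity $\ge 1$.

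\emph{Upper bound.} In the completely reducible module $K(\lambda,w,\mu)$, the multiplicity of $V(\nu)$ equals the dimension of the space of $\mathfrak n^+$-invariants of weight $\nu$ in $K$. It therefore suffices to show that $\dim K(\lambda,w,\mu)_\nu=1$, or at least that the highest weight vectors of weight $\nu$ form a one-dimensional space. Write $v=v_{\sigma\lambda}\otimes v_{\sigma w\mu}$. By PBW, $K=\mathfrak{Un}^-\,\mathfrak{Ub}\,v$. A vector of weight $\nu$ in $K$ is a sum of terms $y\,b\,v$, with $y\in\mathfrak{Un}^-$ of weight $-\gamma$ and $b\in\mathfrak{Ub}$ of weight $\gamma\ge 0$. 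Now $\mathfrak{Ub}\,v\subseteq V_{\sigma}(\lambda)\otimes V_{\sigma w}(\mu)$, and $\nu$ is the top weight of $\mathfrak{Ub}\,v$ in the sense that $v$ is $\mathfrak n^+$-invariant. Hence $\mathfrak{Ub}\,v=\mathbb C v$, and so $K=\mathfrak{Un}^-v$ is a highest weight module that is integrable. It is then irreducible, equal to $V(\nu)$. (I am wary that this argument proves too much, since it would make every $K(\lambda,w,\mu)$ irreducible.) The flaw is precisely that $\sigma$ in general cannot make both components dominant-facing simultaneously. The correct statement is weaker: $e_i v=0$ holds only for simple roots $\alpha_i$ with $\langle\nu,\alpha_i^\vee\rangle>0$ or with a compatibility condition.

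\emph{Main obstacle.} The real content is therefore the upper bound, and for it I would follow Kumar's route via Proposition~\ref{prop:kkprops}(1). Using Frobenius reciprocity on the $\mathfrak b$-side, $\Hom_{\lie{Ug}}(K(\lambda,w,\mu),V(\nu)^{})$ embeds into $\Hom_{\lie{Ub}}(\mathbb C_\lambda\otimes V_w(\mu),V(\nu))$. The latter is computed by a Demazure character identity: the multiplicity of the $\nu$-isotypic part is bounded by the multiplicity of the weight $\nu-\lambda$ in a suitable Demazure-type structure, which is extremal and hence one-dimensional. I expect the hardest step to be justifying this Frobenius-type bound, that is, the surjectivity or injectivity of restriction to the Demazure module. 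I would attempt it using Mathieu's Demazure character formula and the vanishing of higher cohomology of line bundles on Schubert varieties.
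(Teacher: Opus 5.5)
The paper does not prove this theorem; it is cited from Kumar and Mathieu. Judged on its own, your outline has a genuine gap: it assigns the difficulty to the wrong half, and as a result the existence half ends up with no valid argument.

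\textbf{The lower bound.} Your lower bound rests on choosing $\sigma$ so that $v_{\sigma\lambda}\otimes v_{\sigma w\mu}$ is a highest weight vector. That is false except in a trivial case. Since $\nu$ is dominant, your claim would force $\sigma\lambda$ and $\sigma w\mu$ to be dominant. Then $\sigma\lambda=\lambda$, $\sigma w\mu=\mu$, and $\nu=\lambda+\mu$. For $\lie{sl}_2$ with $\lambda=\mu=\Lambda$ and $w=s$ we get $\nu=0$, yet no $v_{\pm\Lambda}\otimes v_{\mp\Lambda}$ is killed by $e$. The invariant is $v_\Lambda\otimes fv_\Lambda-fv_\Lambda\otimes v_\Lambda$, not the generator. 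Your weakened fallback also fails. Take $\lambda=2\Lambda$, $\mu=\Lambda$, $w=s$: then $\nu=\Lambda=\lambda+w\mu$ and $\sigma=1$, but $e(v_{2\Lambda}\otimes v_{-\Lambda})\neq 0$. You noticed the claim ``proves too much'', but you withdrew only its use in the upper bound. The lower bound, which used the same claim, is left unsupported.

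\textbf{The upper bound.} This is the part you call the main obstacle, and it is elementary. Take $Z=\lie{Ub}(v_{\sigma\lambda}\otimes v_{\sigma w\mu})$ with $\sigma(\lambda+w\mu)=\nu$. By Proposition~\ref{prop:kkprops}(2), $Z$ generates $K(\lambda,w,\mu)$. All weights of $Z$ are $\ge\nu$, so $\nu$ is its bottom weight, not its top, and $Z_\nu$ is the line through the generator. Lemma~\ref{lem:gould-injectivity} embeds the highest weight vectors of weight $\nu$ in $K(\lambda,w,\mu)$ into $Z_\nu$, so the multiplicity is at most $1$.

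Equivalently, restriction to $\Hom_{\lie{Ub}}(v_\lambda\otimes V_w(\mu),V(\nu))$ is injective simply because $v_\lambda\otimes V_w(\mu)$ generates. That space has dimension at most one, since a $\lie{Ub}$-map is determined by the image of $v_\lambda\otimes v_{w\mu}$ in the extremal weight space $V(\nu)_{\lambda+w\mu}$. No Demazure character formula or cohomology vanishing is needed here.

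\textbf{What is actually missing.} The deep content is existence, i.e.\ the PRV conjecture in Kostant's form. In the paper's framework it can be organized as follows. The extremal vector of weight $\lambda+w\mu$ in $V(\nu)$ is killed by the generators of $J'$ in Proposition~\ref{prop:ugj-ubjprime}. This holds because $p_\alpha(\lambda+w\mu)\le p_\alpha(w\mu)$, and imaginary root vectors kill extremal vectors. So the $\lie{Ub}$-Hom space is exactly one-dimensional. What is then needed is \emph{surjectivity} of $\res$, not injectivity. That is Kumar's theorem in finite type, and Theorem~\ref{thm:UbUg} here in the symmetric case via excellent filtrations. It turns the nonzero $\lie{Ub}$-map into a nonzero $\lie{Ug}$-map $K(\lambda,w,\mu)\to V(\nu)$.

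For general symmetrizable $\lie g$ you would need Mathieu's methods or another genuine input, such as a path or crystal model argument. Your proposal supplies none of these, and the geometric tools you mention belong to this existence step rather than to the upper bound.
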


\subsection{}
Let $\omega_0$ denote the anti-linear automorphism of $\mathfrak g$ defined by mapping the Chevalley generators as follows $e_i\mapsto f_i, \,f_i\mapsto e_i, \;i=1,2,\ldots,n$ and $\omega_0(h)=h, \;h\in \mathfrak{h}_{\mathbb{R}}$ where ${\lie h}_\mathbb{R}$ is the real form of the Cartan subalgebra $\lie h$.

Let $\catoint$ denote the category of integrable $\lie g$-modules in category $\cato$. An integrable highest weight module $V(\lambda)$ of $\lie g$ admits a positive definite Hermitian form $\langle \cdot, \cdot \rangle$ (the {\em Shapovalov form}) satisfying the $\omega_0$-contravariance property:
 \[ \langle Xv, v' \rangle = \langle v, \omega_0(X) v' \rangle \]
    for all $v, v' \in V$ and $X \in \lie g$.
This form on $V(\lambda)$ is unique up to scaling. A module $V \in \catoint$ is a direct sum of highest weight irreducible modules:
\[ V \cong \bigoplus_{\lambda \in P^+} V(\lambda)^{\oplus k_\lambda}.\]
We let $[V:V(\lambda)] :=k_\lambda$ denote the multiplicity of $V(\lambda)$ in this decomposition.

A positive definite Hermitian $\omega_0$-contravariant form can be defined on $V$ by defining it on its components, though it is no longer unique up to scaling. We will call any such form a Shapovalov form on $V$.
See Sections 2.7, 11.5 of \cite{Kac} and Section 3.14 of \cite{Hum2} for more details.

We record below the key properties of this form that will be used in the sequel:
\begin{lemma}\label{lem:shap-props}
    Let $\lie g$  be a symmetrizable Kac--Moody algebra, $V \in \catoint$ and fix  a Shapovalov form $\langle \cdot, \cdot \rangle$ on $V$. Let $U$ be a submodule of $V$. Then:
    \begin{enumerate}
        \item Distinct weight spaces of $V$ are mutually orthogonal.
        \item $U^\perp$ is also a submodule of $V$ and 
        the orthogonal projection $\pi: V \twoheadrightarrow U$ is $\lie{Ug}$-linear. 
        \item For $\gamma \in P^+$, let $U^+_\gamma$ and $V^+_\gamma$ denote the spaces of highest weight vectors of weight $\gamma$ in $U, \, V$ respectively. Then $\pi(V^+_\gamma) = U^+_\gamma$.
    \end{enumerate}
\end{lemma}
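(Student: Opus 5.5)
The plan is to prove the three parts in order, working throughout with the weight space decomposition of $V$ and the fact that the form restricts to a positive definite Hermitian form on each finite-dimensional weight space $V_\gamma$.

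For (1), let $v\in V_\gamma$ and $v'\in V_{\gamma'}$ with $\gamma\neq\gamma'$, and choose $h\in\lie h_{\mathbb R}$ with $\gamma(h)\neq\gamma'(h)$. Contravariance and $\omega_0(h)=h$ give
\[
\gamma(h)\langle v,v'\rangle=\langle hv,v'\rangle=\langle v,hv'\rangle=\overline{\gamma'(h)}\,\langle v,v'\rangle=\gamma'(h)\langle v,v'\rangle .
\]
The last equality uses that integral weights take real values on $\lie h_{\mathbb R}$. Hence $\langle v,v'\rangle=0$.

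For (2), first show that $U^\perp$ is a submodule. If $u'\in U^\perp$, $X\in\lie g$ and $u\in U$, then
\[
\langle u, Xu'\rangle=\langle \omega_0^{-1}(X)u,u'\rangle=0,
\]
because $U$ is stable under $\omega_0^{-1}(X)\in\lie g$. Here we use that $\omega_0$ is an involution, so contravariance can be applied in either slot. Next we need $V=U\oplus U^\perp$, which is the main subtlety, since $V$ is infinite-dimensional and the form is not complete.
1. Since $U$ is an $\lie h$-submodule of a weight module, it is a weight module: $U=\bigoplus_\gamma U_\gamma$ with $U_\gamma=U\cap V_\gamma$.
2. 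Each $V_\gamma$ is finite-dimensional with a positive definite form, so $V_\gamma=U_\gamma\oplus(U_\gamma^{\perp}\cap V_\gamma)$.
3. By (1), $U^\perp\cap V_\gamma$ equals the orthogonal complement of $U_\gamma$ inside $V_\gamma$.
4. Summing over $\gamma$ gives $V=U\oplus U^\perp$.

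The projection $\pi$ along $U^\perp$ is then $\lie{Ug}$-linear, because both summands are submodules. It is also weight-preserving.

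For (3), recall that a highest weight vector of weight $\gamma$ is a vector of weight $\gamma$ killed by $\lie n^+$. Take $v\in V^+_\gamma$. Then $\pi(v)$ has weight $\gamma$, and $e_i\pi(v)=\pi(e_iv)=0$ for all $i$, so $\pi(V^+_\gamma)\subseteq U^+_\gamma$. Conversely, $\pi$ restricts to the identity on $U$, and $U^+_\gamma\subseteq V^+_\gamma$, so $U^+_\gamma=\pi(U^+_\gamma)\subseteq\pi(V^+_\gamma)$. This gives equality.

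The only step needing care is the decomposition $V=U\oplus U^\perp$ in (2). It reduces to finite-dimensional linear algebra precisely because of (1) and the finite-dimensionality of weight spaces in $\catoint$.
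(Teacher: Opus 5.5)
Your proof is correct. The paper gives no proof of this lemma, only references (Kac \S\S 2.7, 11.5 and Humphreys \S 3.14), and your weight-space-by-weight-space argument, including the reduction of $V=U\oplus U^\perp$ to finite-dimensional linear algebra, is the standard argument found there.

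One small point concerns (1). The reality of $\gamma(h)$ for $h\in\lie h_{\mathbb R}$ is best deduced from positivity of the form: for $0\neq v\in V_\gamma$,
\[
\gamma(h)\langle v,v\rangle=\langle hv,v\rangle=\langle v,hv\rangle=\overline{\gamma(h)}\langle v,v\rangle .
\]
Calling $\gamma$ integral is not enough, because the conditions $\langle\gamma,\alpha_i^\vee\rangle\in\mathbb Z$ only constrain $\gamma$ on the span of the coroots.
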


\section{Multiplicities in tensor products and Kostant--Kumar modules}\label{sec:multlrw}

\subsection{}
When $\lie g$ is a finite-dimensional semisimple Lie algebra, the following proposition is a result independently due to Kostant and Parthasarathy-Ranga Rao-Varadarajan (see \cite[\S 2.2]{PRV}, \cite[Remark 4]{kostant-tensmult}, \cite{Kum2}). We shall therefore refer to the space $\mcd$ below as the KPRV multiplicity space. For arbitrary symmetrizable Kac-Moody algebras $\lie g$, this was established by Jeralds and Kumar \cite[Proposition 4.1]{jeralds-kumar}.

\begin{proposition}\label{prop:kashi-glob}
    Let $\mathfrak{g}$ be a symmetrizable Kac--Moody algebra. For $\lambda, \mu, \nu \in P^+$,  let
    $$\mcd=\{v\in V(\mu)_{\nu-\lambda}:\, e_i^{\langle \lambda, \,\alpha_i^{\vee}\rangle +1}\,v=0 \;\text{ for } i=1,2, \ldots, n\}.$$ Then, $\dim \mcd =c_{\lambda\mu}^{\nu},$ the multiplicity of $V(\nu)$ in $V(\lambda)\otimes V(\mu).$
\end{proposition}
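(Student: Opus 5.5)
We will identify $\mcd$ with the space of highest weight vectors of weight $\nu$ in $V(\lambda)\otimes V(\mu)$, which has dimension $\lr$ by complete reducibility in $\catoint$. The natural map is
\[
\Phi\colon \Hom_{\lie g}\bigl(V(\lambda)\otimes V(\mu),V(\nu)\bigr)\;\cong\;\Hom_{\lie g}\bigl(V(\nu)^{\vee}\otimes V(\lambda),V(\mu)^{\vee}\bigr)\;\longrightarrow\;V(\mu)_{\nu-\lambda},
\]
but in the Kac--Moody setting duals are lowest weight modules, so we work directly with the tensor product and the Shapovalov form instead. Fix Shapovalov forms on $V(\lambda)$ and $V(\mu)$. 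We use the product form on $V(\lambda)\otimes V(\mu)$, which is positive definite Hermitian and $\omega_0$-contravariant for the diagonal action. Let $v_\lambda$ be a unit highest weight vector.

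Define $\phi\colon (V(\lambda)\otimes V(\mu))_\nu\to V(\mu)_{\nu-\lambda}$ by contracting the first factor against $v_\lambda$, i.e.\ $\phi(x)=(\langle\cdot ,v_\lambda\rangle\otimes \mathrm{id})(x)$. We restrict $\phi$ to the space $H_\nu$ of highest weight vectors of weight $\nu$ and show that it is injective with image exactly $\mcd$.

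\textbf{Image lies in $\mcd$.} Suppose $x\in H_\nu$, so $e_i x=0$ for all $i$. Take any $u\in V(\lambda)_{\lambda-k\alpha_i}$ and contract $x$ against $u$; we express $e_i$ acting on the second factor in terms of $e_i$ acting on the first. Note that $\langle e_i u',v\rangle=\langle u',f_iv\rangle$. Iterating, we get
\[
e_i^{m}\phi(x)=(-1)^m(\langle\cdot ,f_i^{m}v_\lambda\rangle\otimes\mathrm{id})(x)
\]
for every $m$. This is a routine check via the coproduct: contract against $v_\lambda$ and move each $e_i$ from the second factor to the first. Since $f_i^{\langle\lambda,\alpha_i^\vee\rangle+1}v_\lambda=0$ in the integrable module $V(\lambda)$, the right-hand side vanishes for $m=\langle\lambda,\alpha_i^\vee\rangle+1$, so $\phi(x)\in\mcd$.

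\textbf{Injectivity.} If $\phi(x)=0$, then the same identity shows that contracting $x$ against $f_{i_1}\cdots f_{i_r}v_\lambda$ gives $\pm e_{i_r}\cdots e_{i_1}\phi(x)=0$. These vectors span $V(\lambda)$, so $x=0$.

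\textbf{Surjectivity: the main obstacle.} Given $v\in\mcd$, we must build $x\in H_\nu$ with $\phi(x)=v$. The plan is to use the standard presentation $V(\lambda)=\lie{Un^-}/\langle f_i^{\langle\lambda,\alpha_i^\vee\rangle+1}\rangle$, which holds for symmetrizable Kac--Moody algebras by Gabber--Kac. Define a linear functional-valued map
\[
\psi\colon V(\lambda)\to V(\mu),\qquad \psi(Yv_\lambda)=S(Y)^{\ast}v,
\]
where $Y\mapsto S(Y)^\ast$ is the anti-automorphism of $\lie{Un^-}\to\lie{Un^+}$ sending $f_i\mapsto -e_i$. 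This is well defined precisely because the left ideal $\lie{Un^-}f_i^{\langle\lambda,\alpha_i^\vee\rangle+1}$ maps into the left annihilator side, which kills $v$; this is where the defining condition of $\mcd$ is used.

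Using the form to identify $\psi$ with a vector $x$ in the weight-$\nu$ space of the completed tensor product, we then need two facts:
\begin{enumerate}
\item only finitely many weights contribute, since $V(\mu)_{\nu-\lambda+\beta}$ with $\beta$ the weight drop in $V(\lambda)$ is nonzero only for finitely many $\beta$, so $x$ lies in the honest tensor product;
\item $x$ is annihilated by every $e_i$, which follows by reversing the computation above.
\end{enumerate}
The delicate point is checking carefully that the anti-automorphism is compatible with the Gabber--Kac relations. We expect this to go through by direct computation.
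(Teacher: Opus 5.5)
The paper does not prove this proposition. It is quoted from Kostant, Parthasarathy--Ranga Rao--Varadarajan and Jeralds--Kumar, and is used only in Proposition~\ref{prop:pi0-image}. There, the injectivity of $\pi_0$ on $V^+_\nu$ (Lemma~\ref{lem:gould-injectivity}) and the containment $\pi_0(V^+_\nu)\subseteq v_\lambda\otimes\mcd$ are upgraded to an isomorphism by the dimension count this proposition supplies. Your argument is a correct proof, self-contained modulo the Gabber--Kac presentation, and it runs this logic the other way.

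Your $\phi$ is the paper's $\pi_0$: indeed $\pi_0(x)=v_\lambda\otimes\phi(x)$ for unit $v_\lambda$. Your containment and injectivity steps are a hands-on version of the paper's two ingredients. Write $c_x(u):=(\langle\cdot,u\rangle\otimes\mathrm{id})(x)$. For $e_jx=0$ you use the identity $c_x(f_ju)=-e_j\,c_x(u)$, where the paper instead uses $\pi_0N(1-\pi_0)=0$ with $N=f_i^{\langle\nu,\alpha_i^\vee\rangle+1}$ followed by Lemma~\ref{lem:sl2string}.

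The genuinely new ingredient is surjectivity. You obtain it from $V(\lambda)\cong\lie{Un}^-/\sum_i\lie{Un}^-f_i^{\langle\lambda,\alpha_i^\vee\rangle+1}$ (Gabber--Kac, \cite[Corollary 10.4]{Kac}), which holds for all symmetrizable $\lie g$. This is the usual restricted-dual proof, in which highest weight vectors of weight $\nu$ correspond to $\lie b$-maps $\mathbb{C}_\nu\otimes V(\lambda)^\vee\to V(\mu)$; the Shapovalov form stands in for the restricted dual. It makes Proposition~\ref{prop:pi0-image} independent of the cited result and writes down the inverse of $\pi_0|_{V^+_\nu}$ explicitly.

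One thing must be corrected, and it is precisely your ``delicate point''. Read $Y\mapsto S(Y)^*$ as the antipode followed by the adjoint $X\mapsto X^*$ coming from $\omega_0$, so that $\langle Xu,u'\rangle=\langle u,X^*u'\rangle$ for $X\in\lie{Ug}$. It is then a composite of two anti-automorphisms, hence an antilinear algebra \emph{homomorphism} $\lie{Un}^-\to\lie{Un}^+$ with $f_{i_1}\cdots f_{i_r}\mapsto(-1)^r e_{i_1}\cdots e_{i_r}$. It is not an anti-automorphism, and this orientation is what you need. An order-reversing map would send $\lie{Un}^-f_i^m$ into the right ideal $e_i^m\lie{Un}^+$, which need not kill $v$, and it would also break the intertwining used in (2).

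With the correct map nothing is delicate. Since $S(Zf_i^m)^*v=S(Z)^*(-e_i)^m v=0$ for $m=\langle\lambda,\alpha_i^\vee\rangle+1$, $\psi$ is well defined. It is antilinear, matching $u\mapsto c_x(u)$.

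For (2), no completion is needed, because $(V(\lambda)\otimes V(\mu))_\nu$ is finite-dimensional. Set $x=\sum_k u_k\otimes\psi(u_k)$, with $\{u_k\}$ orthonormal bases of the finitely many relevant weight spaces of $V(\lambda)$, so that $c_x=\psi$. Contravariance gives $c_{e_ix}(u)=\psi(f_iu)+e_i\psi(u)$, which is $0$ because $S(f_iY)^*=-e_iS(Y)^*$.

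Finally, in your injectivity step the contraction against $f_{i_1}\cdots f_{i_r}v_\lambda$ equals $(-1)^r e_{i_1}\cdots e_{i_r}\phi(x)$; you reversed the order, which is harmless there.
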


Our objective is to establish the corresponding result for the $w$-refined tensor product multiplicities $\lrw$. This is accomplished in Theorem~\ref{thm:shap-proj-w} below.

\subsection{}

We state a generalization of a result of Gould-Edwards \cite[Lemma 1]{GE} which plays a key role in the sequel. 
\begin{lemma}\label{lem:gould-injectivity}
Let $\mathfrak{g}$ be a symmetrizable Kac--Moody algebra and $V \in \catoint$. Let $Z \subseteq V$ be a $\lie{Ub}$-submodule such that $Z$ generates $V$ as a $\lie{Ug}$-module. Let $\pi: V \twoheadrightarrow Z$ be the orthogonal projection with respect to a Shapovalov form on $V$. Then $\pi$ is injective on $V^+:=\{v \in V: e_i v=0\; \text{ for } i=1, \ldots, n\}$. 
\end{lemma}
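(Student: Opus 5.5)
We argue by contradiction, using the contravariance of the Shapovalov form. Suppose $v \in V^+$ satisfies $\pi(v) = 0$. Then $v \in Z^\perp$, and the goal is to show that $v$ is orthogonal to all of $V$. Positive definiteness of the form then forces $v=0$.

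Since $Z$ generates $V$ as a $\lie{Ug}$-module and $Z$ is $\lie{Ub}$-stable, the PBW decomposition $\lie{Ug} = \lie{U}\mathfrak{n}^- \otimes \lie{Ub}$ gives
\[ V = \lie{Ug}\, Z = \lie{U}\mathfrak{n}^-\, Z. \]
It therefore suffices to show that $\langle v, y z\rangle = 0$ for every monomial $y = f_{i_1}\cdots f_{i_k}$ in the Chevalley lowering generators and every $z \in Z$.

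By $\omega_0$-contravariance, applied repeatedly,
\[ \langle v, f_{i_1}\cdots f_{i_k} z\rangle = \langle e_{i_k}\cdots e_{i_1} v, z\rangle. \]
Here we use that the form is Hermitian and that $\omega_0$ sends $f_i$ to $e_i$. Strictly, contravariance is stated as $\langle Xv,v'\rangle=\langle v,\omega_0(X)v'\rangle$. The adjoint of $f_i$ is $\omega_0(f_i)=e_i$, because $\omega_0$ is an involution: $\langle v, f_i v'\rangle = \langle \omega_0(f_i) v, v'\rangle$, obtained by applying the contravariance identity with $X = e_i$ and conjugating.

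For $k \ge 1$, the right-hand side vanishes because $e_{i_1} v = 0$, as $v \in V^+$. For $k=0$, we get $\langle v, z\rangle = 0$ directly from $v \in Z^\perp$. Hence $v \perp V$, so $\langle v,v\rangle = 0$ and therefore $v = 0$. This proves injectivity.

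The only delicate point is the adjointness bookkeeping for the anti-linear automorphism $\omega_0$. In the argument this is used only on Chevalley generators, and there it is immediate from the definition. We also need the fact that the orthogonal projection $\pi$ onto $Z$ is well defined, with $\ker \pi = Z^\perp$, even though $Z$ is merely a $\lie{Ub}$-submodule. This holds weight space by weight space. Weight spaces of $V\in\catoint$ are finite-dimensional and mutually orthogonal by Lemma~\ref{lem:shap-props}(1), and $Z$, being $\lie{h}$-stable, is the direct sum of its weight components $Z\cap V_\gamma$. So $\pi$ is the sum of the finite-dimensional orthogonal projections $V_\gamma \to Z\cap V_\gamma$, and its kernel is exactly $Z^\perp$.
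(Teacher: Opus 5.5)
Your proof is correct and is essentially the paper's argument. The paper expands $v=\sum_i u_i z_i$ with $u_i\in\lie{Un}^-$, $z_i\in Z$, and uses contravariance to get $\langle v,u_iz_i\rangle=\langle \omega_0(u_i)v,z_i\rangle=0$ (since $\omega_0(u_i)v\in\mathbb{C}v$ and $v\in Z^\perp$); this is exactly your computation, phrased as $\langle v,v\rangle=0$ rather than $v\perp V$, and your remark that $\pi$ is defined weight space by weight space with kernel $Z^\perp$ supplies a detail the paper leaves implicit.
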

\begin{proof}
We need to establish that $V^+ \cap Z^\perp = 0$. Let $v \in V^+ \cap Z^\perp$. Since $\lie{Ug}\,Z = V$, we can write $v = \sum_{i=1}^k u_i z_i$ for some $u_i \in \lie{Ug}$, $z_i \in Z$. Since $Z$ is $\lie{Ub}$-invariant and $\lie{Ug} = \lie{Un}^- \otimes \lie{Ub}$, we can assume without loss of generality that $u_i \in \lie{Un}^-$. We compute:
\[ \langle v, v \rangle = \sum_{i=1}^k \langle v, \,u_i z_i \rangle = \sum_{i=1}^k \langle \omega_0(u_i) \,v, \,z_i \rangle.\]
Now $\omega_0(u_i) \,v \in \mathbb{C}v$ since $\omega_0(u_i) \in \lie{Un}^+$ and $v \in V^+$. Since $v \in Z^\perp$, we conclude $\langle \omega_0(u_i)\, v, \,z_i \rangle =0$ for all $i$. Thus $\langle v, v \rangle =0$.  
\end{proof}

\begin{corollary}\label{cor:hwtVZ}
    With notation as in Lemma~\ref{lem:gould-injectivity}, let $\gamma\in P^+$ such that $[V:V(\gamma)] >0$. Then $\gamma$ is a weight of $Z$. 
\end{corollary}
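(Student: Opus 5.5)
Since $[V:V(\gamma)]>0$, the module $V$ contains a nonzero highest weight vector of weight $\gamma$. I will use this vector together with Lemma~\ref{lem:gould-injectivity} and the weight-space orthogonality from Lemma~\ref{lem:shap-props}(1).

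First, because $V\in\catoint$ is a direct sum of copies of irreducibles $V(\lambda)$, the hypothesis $[V:V(\gamma)]>0$ gives a summand isomorphic to $V(\gamma)$. Its highest weight vector $v$ is nonzero, has weight $\gamma$, and satisfies $e_i v=0$ for all $i$. Hence $v\in V^+$, and in particular $V^+_\gamma\neq 0$.

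Next, apply Lemma~\ref{lem:gould-injectivity}: the orthogonal projection $\pi:V\twoheadrightarrow Z$ is injective on $V^+$, so $\pi(v)\neq 0$. It remains to see that $\pi(v)$ lies in the weight space $Z_\gamma$. Note that $Z$ is only a $\lie{Ub}$-submodule, so Lemma~\ref{lem:shap-props}(2) does not apply to it directly, and I instead argue through weights. Since $Z$ is $\lie h$-stable, it is a sum of its weight spaces, $Z=\bigoplus_\beta Z_\beta$ with $Z_\beta\subseteq V_\beta$. By Lemma~\ref{lem:shap-props}(1) the spaces $V_\beta$ are mutually orthogonal, so $Z^\perp$ is also spanned by weight vectors. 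Explicitly, $Z^\perp=\bigoplus_\beta (Z_\beta^\perp\cap V_\beta)$, where the orthogonal complement of $Z_\beta$ is taken inside $V_\beta$; each $V_\beta$ is finite-dimensional, so these complements behave as expected. Consequently $\pi$ preserves weight spaces: writing $v=z+z'$ with $z\in Z_\gamma$ and $z'\in V_\gamma\cap Z_\gamma^\perp$ gives $\pi(v)=z\in Z_\gamma$. Therefore $Z_\gamma\neq 0$, and $\gamma$ is a weight of $Z$.

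The only delicate point is this compatibility of the projection with the weight decomposition. Weight spaces are finite-dimensional and mutually orthogonal, so $\pi$ decomposes as a direct sum of finite-dimensional orthogonal projections $V_\beta\to Z_\beta$, which settles it.
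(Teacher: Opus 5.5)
Your proof is correct and follows the paper's argument: pick a nonzero highest weight vector $v$ of weight $\gamma$, use Lemma~\ref{lem:gould-injectivity} to get $\pi(v)\neq 0$, and observe that $\pi(v)$ lies in $Z_\gamma$. Your explicit check that $\pi$ respects the weight decomposition (via $V_\beta = Z_\beta \oplus (Z_\beta^\perp\cap V_\beta)$ in each finite-dimensional weight space) is a step the paper leaves implicit. It also shows that the orthogonal projection onto the merely $\lie{Ub}$-stable subspace $Z$ is well defined in the first place.
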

\begin{proof}
    The hypothesis implies that there exists a nonzero $v \in V^+$ of weight $\gamma$. The image $\pi(v)$ under the projection is thus a nonzero vector of weight $\gamma$ in $Z$.
    \end{proof}
\subsection{}
In general, there are many choices for Shapovalov forms on a module $V \in \catoint$. If $V = V(\lambda_1) \otimes V(\lambda_2)$ for some $\lambda_1, \lambda_2 \in P^+$, then we will always take the following "canonical" choice - the product of the Shapovalov forms on $V(\lambda_1)$ and $V(\lambda_2)$:
\[\langle v_1\otimes v_2, v_1'\otimes v_2'\rangle = \langle v_1,v_1' \rangle \langle v_2,v_2' \rangle\]
for $v_i, v_i' \in V(\lambda_i)$ for $i=1, 2$.

We will also need the following lemma, which is a simple fact from the representation theory of $\mathfrak{sl}_2$.
\begin{lemma}\label{lem:sl2string}
Let $\mathfrak{g}$ be a symmetrizable Kac--Moody algebra, $V \in \catoint$ be a $\mathfrak{g}$-module and $\alpha \in \posreal$. Let $v \in V$ be a weight vector of weight $\gamma$ and let $k \geq 0$. Then $e_\alpha^k(v) =0$ if and only if $f_\alpha^{\langle \gamma, \alpha^\vee \rangle + k}\, v =0$.
\end{lemma}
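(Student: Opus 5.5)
The plan is to reduce the statement to the representation theory of the copy $\mathfrak{s}_\alpha = \mathrm{span}\{e_\alpha, f_\alpha, \alpha^\vee\}\cong\mathfrak{sl}_2$ inside $\lie g$. Normalize $e_\alpha, f_\alpha$ so that $[e_\alpha,f_\alpha]=\alpha^\vee$; this is possible because $\alpha$ is real, hence $\alpha = u\alpha_i$ for some $u\in W$ and simple root $\alpha_i$, and the root spaces $\lie g_{\pm\alpha}$ are one-dimensional. Since $V\in\catoint$, both $e_\alpha$ and $f_\alpha$ act locally nilpotently on $V$. (Real root vectors are $W$-conjugates of Chevalley generators, and the Weyl group acts on integrable modules via the exponentials $\exp(e_i)\exp(-f_i)\exp(e_i)$.) Together with the diagonalizability of $\alpha^\vee$, this shows that $V$ restricted to $\mathfrak{s}_\alpha$ is a direct sum of finite-dimensional irreducible $\mathfrak{sl}_2$-modules. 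The $\alpha^\vee$-eigenvalue of a weight vector of weight $\gamma$ is $m:=\langle\gamma,\alpha^\vee\rangle$.

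Next I decompose $v = \sum_j v_j$, where each $v_j$ lies in an irreducible $\mathfrak{s}_\alpha$-summand $L(n_j)$ of dimension $n_j+1$ and has $\alpha^\vee$-weight $m$. This is possible since $v$ is an $\alpha^\vee$-eigenvector and the isotypic decomposition is compatible with weights. The operators $e_\alpha$ and $f_\alpha$ preserve each summand, so $e_\alpha^k v=0$ iff $e_\alpha^k v_j = 0$ for all $j$, and similarly for $f_\alpha$. It therefore suffices to treat $v$ a nonzero weight vector in $L(n)$ of weight $m$, written $m = n-2p$ with $0\le p\le n$. In the standard basis, $e_\alpha^k v = 0$ iff $k > p$, since $v$ is $p$ steps below the highest weight vector and $e_\alpha$ is injective on non-highest weight vectors. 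Likewise, $f_\alpha^{k'}v = 0$ iff $k' > n-p$. With $k' = m+k = n-2p+k$, the condition $k' > n-p$ becomes $k > p$. Hence the two conditions coincide summand by summand.

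One point needs care: if $m+k<0$, the power $f_\alpha^{m+k}$ is meaningless. In that case $k < -m$ forces $m<0$, and on each summand $p \ge -m > k$, so $e_\alpha^k v_j \neq 0$ for $v_j \neq 0$. So for nonzero $v$ neither side holds, and the statement is to be read with this understanding (or with the convention that a negative power is not zero). Also, $k=0$ gives $v=0 \iff f_\alpha^{m}v=0$, which is consistent since $k' = m > n-p$ would force $p>n$.

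The only step I expect to need genuine justification is the first: local nilpotence of $e_\alpha, f_\alpha$ for non-simple real roots. I would obtain it by transporting along $W$, taking $\tilde u$ the lift of $u$ acting on $V$, so that $\tilde u e_i \tilde u^{-1}$ is a nonzero multiple of $e_\alpha$. After that, everything is elementary $\mathfrak{sl}_2$ theory.
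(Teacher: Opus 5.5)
Your proposal is correct and follows the same route as the paper: restrict to the $\mathfrak{sl}_2$-triple $\{e_\alpha,f_\alpha,\alpha^\vee\}$, use integrability to decompose $V$ into finite-dimensional irreducibles, and compare the two vanishing conditions inside each $L(n)$. You also supply details the paper leaves to ``standard facts'': local nilpotence of $e_\alpha,f_\alpha$ for non-simple real roots via Weyl group lifts, the componentwise reduction, and the convention needed when $\langle\gamma,\alpha^\vee\rangle+k<0$. One trivial slip, which does not affect the conclusion: in your $k=0$ remark, $m>n-p$ would force $p<0$, not $p>n$.
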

\begin{proof}
We consider the copy of $\mathfrak{sl}_2$ spanned by $e_\alpha, f_\alpha, \alpha^\vee$.
Since $V \in \catoint$, it decomposes into a direct sum of irreducible $\mathfrak{sl}_2$-modules. In particular, the direct sum of the weight spaces of $V$ with weights in the "$\alpha$-string" $\gamma + \mathbb{Z}\alpha$ is $\mathfrak{sl}_2$-invariant. The lemma now follows from standard facts about irreducible $\mathfrak{sl}_2$-modules \cite{Hum1}.
\end{proof}

The following result follows from Proposition~\ref{prop:kashi-glob} and Lemma~\ref{lem:gould-injectivity}. The arguments closely follow that of \cite[Theorem 2]{GE}.
\begin{proposition}\label{prop:pi0-image}
    Let $\mathfrak{g}$ be a symmetrizable Kac--Moody algebra and $\lambda, \mu, \nu \in P^+$. 
    Let $v_\lambda$ denote a nonzero highest weight vector of $V(\lambda)$. Let $\pi_0: V(\lambda)\otimes V(\mu)\twoheadrightarrow v_\lambda\otimes V(\mu)$ be the orthogonal projection with respect to the product Shapovalov form. Let $\left(V(\lambda)\otimes V(\mu)\right)_{\nu}^+$  be the set of highest weight vectors of weight $\nu$ in the tensor product. Then, $\pi_0(\left(V(\lambda)\otimes V(\mu)\right)_\nu^+)= v_{\lambda}\otimes \mcd.$
\end{proposition}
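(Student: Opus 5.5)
We show the inclusion $\pi_0\bigl((V(\lambda)\otimes V(\mu))^+_\nu\bigr)\subseteq v_\lambda\otimes\mcd$, and then get equality from injectivity and a dimension count.

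\emph{Setup.} Set $V=V(\lambda)\otimes V(\mu)$ and $Z=v_\lambda\otimes V(\mu)$. First, $Z$ is a $\lie{Ub}$-submodule: $\lie h$ and each $e_i$ act on $v_\lambda\otimes u$ by the coproduct, and $e_iv_\lambda=0$. Second, $Z$ generates $V$ as a $\lie{Ug}$-module. This is a special case of Proposition~\ref{prop:kkprops}(1), or it can be checked directly by induction on the depth of the first factor, using $f_i(x\otimes u)=f_ix\otimes u+x\otimes f_iu$. Hence Lemma~\ref{lem:gould-injectivity} applies, and $\pi_0$ is injective on $V^+$, in particular on $V^+_\nu$.

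\emph{Description of $\pi_0$.} Choose an orthonormal weight basis of $V(\lambda)$ containing $v_\lambda/\|v_\lambda\|$. With respect to the product form, $Z^\perp=(v_\lambda)^\perp\otimes V(\mu)$. So if $x=v_\lambda\otimes u+\sum_j x_j\otimes u_j$ with $x_j$ of weight strictly below $\lambda$, then $\pi_0(x)=v_\lambda\otimes u$. For $x$ of weight $\nu$, the vector $u$ has weight $\nu-\lambda$.

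\emph{The inclusion, which is the main step.} Let $x\in V^+_\nu$ and write $\pi_0(x)=v_\lambda\otimes u$. We must show $e_i^{k+1}u=0$, where $k=\langle\lambda,\alpha_i^\vee\rangle$.

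Take any $y\in V(\mu)$ of weight $\nu-\lambda+(k+1)\alpha_i$. Then
\[\langle v_\lambda\otimes e_i^{k+1}u,\;v_\lambda\otimes y\rangle=\langle v_\lambda\otimes u,\;v_\lambda\otimes f_i^{k+1}y\rangle=\langle x,\;v_\lambda\otimes f_i^{k+1}y\rangle,\]
where the last equality holds because $v_\lambda\otimes f_i^{k+1}y\in Z$.

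Next, expand $f_i^{k+1}(v_\lambda\otimes y)=\sum_{r}\binom{k+1}{r}f_i^rv_\lambda\otimes f_i^{k+1-r}y$. The $r=k+1$ term vanishes, because $f_i^{k+1}v_\lambda=0$ by Lemma~\ref{lem:sl2string} with $k'=1$, using $e_iv_\lambda=0$. Therefore $v_\lambda\otimes f_i^{k+1}y=f_i^{k+1}(v_\lambda\otimes y)-\sum_{1\le r\le k}\binom{k+1}{r}f_i^rv_\lambda\otimes f_i^{k+1-r}y$.

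Pairing with $x$ handles each piece:
\begin{itemize}
\item The first term gives $\langle e_i^{k+1}x,\,v_\lambda\otimes y\rangle=0$, since $x\in V^+$.
\item For the terms with $1\le r\le k$, one cannot simply discard them, since their first factor $f_i^rv_\lambda$ is not orthogonal to all of $x$.
\end{itemize}

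I expect these middle terms to be the main obstacle. My plan is to run an induction, as in Gould--Edwards. Write $x=\sum_r f_i^rv_\lambda\otimes u_r+(\text{terms with other first factors})$, where $u_0$ is a multiple of $u$ and the omitted terms are orthogonal to everything above. The condition $e_ix=0$, restricted to components whose first factor lies in the $\alpha_i$-string through $v_\lambda$, gives recursions of the form $c_r\,u_{r-1}+e_iu_r=0$ with explicit nonzero scalars $c_r$. Iterating yields $u_r\propto e_i^r u$ for $r\le k$, and at the top, since $f_i^{k+1}v_\lambda=0$, it forces $e_i^{k+1}u=0$. Only the $\lie{sl}_2$-string of $v_\lambda$ is involved, so this part is rank-one bookkeeping.

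\emph{Equality.} Injectivity from the first step gives $\dim\pi_0(V^+_\nu)=\dim V^+_\nu=c^\nu_{\lambda\mu}$. By Proposition~\ref{prop:kashi-glob}, $\dim\mcd=c^\nu_{\lambda\mu}$ as well. The inclusion of spaces of equal finite dimension is therefore an equality, which proves the proposition.
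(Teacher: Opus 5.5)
Your proof is correct, but the key inclusion uses a different mechanism from the paper's. The write-up also contains a detour you should drop. The pairing computation with $f_i^{k+1}(v_\lambda\otimes y)$ never closes, and the recursion in your last step proves $e_i^{k+1}u=0$ on its own.

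To make that step precise, use $V(\lambda)_{\lambda-r\alpha_i}=\mathbb{C}f_i^rv_\lambda$ for $0\le r\le k$, and $=0$ for $r>k$. Write $x=\sum_{r=0}^{k}f_i^rv_\lambda\otimes u_r+x'$, where $u_0=u$ and the first factors in $x'$ have weights outside $\lambda-\mathbb{Z}_{\ge0}\alpha_i$. For weight reasons, $x'$ contributes nothing to the components of $e_ix$ along $f_i^{r-1}v_\lambda\otimes V(\mu)$. Since $e_if_i^rv_\lambda=r(k-r+1)f_i^{r-1}v_\lambda$, these components of $e_ix=0$ read
\[
r(k-r+1)\,u_r+e_iu_{r-1}=0\quad(1\le r\le k),\qquad e_iu_k=0 .
\]
Your stated form $c_ru_{r-1}+e_iu_r=0$ has the indices reversed, although your conclusion $u_r\propto e_i^ru$ is what the correct version gives. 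Hence $u_k=c\,e_i^ku$ with $c\neq0$, and $e_iu_k=0$ yields $e_i^{k+1}u=0$.

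\textbf{Comparison with the paper.} The paper argues from a lowering relation instead. It takes $N=f_i^{\langle\nu,\alpha_i^\vee\rangle+1}$, which kills $x$. Because $Z$ is $\lie{Ub}$-stable, $Z^\perp$ is $\lie{Un}^-$-stable, so $\pi_0N=\pi_0N\pi_0$ and $0=\pi_0Nx=v_\lambda\otimes Nu$. Lemma~\ref{lem:sl2string}, applied to $u$ of weight $\nu-\lambda$, then converts this into $e_i^{k+1}u=0$.

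In your pairing language, this is exactly the repair for the obstacle you hit. Put the lowering power on $x$ rather than on $v_\lambda\otimes y$. Then the $Z$-side carries only raising operators, which preserve $Z$ because $e_iv_\lambda=0$. With $M=\langle\nu,\alpha_i^\vee\rangle+1$ you get $\langle v_\lambda\otimes f_i^Mu,\,v_\lambda\otimes y\rangle=\langle x,\,e_i^M(v_\lambda\otimes y)\rangle=\langle f_i^Mx,\,v_\lambda\otimes y\rangle=0$, with no cross terms.

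What each route buys:
\begin{itemize}
\item The paper's argument is short and coordinate-free. It reuses the contravariance mechanism that also drives Theorem~\ref{thm:shap-proj-w}.
\item Yours is the classical PRV-style computation. It uses $e_ix=0$ directly and needs the form only to identify $\pi_0x$. It also shows that the $\alpha_i$-string components of $x$ are determined by $u$. The cost is that it depends on the explicit structure of the $\alpha_i$-string through $v_\lambda$.
\end{itemize}
The injectivity and dimension-count steps are the same as in the paper.

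One minor point: Proposition~\ref{prop:kkprops} gives $\lie{Ug}\,Z=V$ directly only in finite type, via $w_0$ and parts (1) and (5). In general, rely on your induction argument, which is fine.
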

\begin{proof}
Let $Z = v_\lambda \otimes V(\mu)$ and $V = V(\lambda) \otimes V(\mu)$. It is easy to see that $Z$ is $\lie{Ub}$-invariant and that $\lie{Ug} \,Z = V$. Lemma~\ref{lem:gould-injectivity} implies that $\pi_0$ is injective on $V_\nu^+$. We claim that $\pi_0(V^+_\nu) \subset v_\lambda \otimes \mcd$. To prove the claim, we observe that $Z$ is $\lie{Ub}$-invariant, and hence that $Z^\perp$ is $\lie{Un}^-$-invariant by the $\omega_0$-contravariance of the Shapovalov form. Thus $\pi_0 N (1-\pi_0)=0$ for all $N \in \lie{Un}^-$. Fix a simple root $\alpha_i$ and let  $N = f_i^{\langle \nu, \alpha_i^\vee\rangle +1} \in \lie{Un}^-$. If $\xi \in V^+_\nu$, then $N(\xi) =0$ . Letting $\pi_0(\xi) = v_\lambda \otimes u$, we compute 
\[ 0 = \pi_0 N (\xi) = \pi_0 N \pi_0(\xi) = v_\lambda \otimes N(u). \]
Thus $N(u) = 0$, i.e., $f_i^{\langle \nu, \alpha_i^\vee\rangle +1} u =0$. Since the weight of $u$ is $\nu-\lambda$, Lemma~\ref{lem:sl2string} implies that $e_i^{\langle \lambda, \alpha_i^\vee\rangle +1} u =0$. Since this holds for all simple roots, this establishes our claim that $\pi_0(V^+_\nu) \subset v_\lambda \otimes \mcd$.

Now 
$\dim V_\nu^+$ is the multiplicity of $V(\nu)$ in $V$, i.e., $\dim V_\nu^+ = c_{\lambda\mu}^\nu$, and this in turn coincides with $\dim \mcd$ by Proposition~\ref{prop:kashi-glob}. Hence $\pi_0$ maps $V^+_\nu$ isomorphically to $v_\lambda \otimes \mcd$. 
\end{proof}

\subsection{}
 Let $\mathfrak{g}$ be a symmetrizable Kac--Moody algebra and let $\zeta \in P^+$. Consider the irreducible representation $V(\zeta)$. Given $\sigma \in W$, let $v_{\sigma \zeta}$ denote the unique (upto scaling) vector of weight $\sigma\zeta$ in $V(\zeta)$. 
Now, suppose $\lambda, \mu \in P^+$, $w \in W$. Consider the Kostant--Kumar module \[K(\lambda, w, \mu)= \mathfrak{Ug}(v_\lambda\otimes v_{w\mu}) \subseteq V(\lambda) \otimes V(\mu).\] 
We recall from Proposition~\ref{prop:kkprops} that $K(\lambda,w,\mu)$ may equivalently be defined as $\mathfrak{Ug}(v_{\sigma\lambda} \otimes v_{\sigma w\mu})$ for each fixed $\sigma \in W$. 
Given $\nu \in P^+$, let $\lrw :=[K(\lambda,w,\mu): V(\nu)]$ be the multiplicity of $V(\nu)$ in $K(\lambda, w, \mu)$:
 \begin{equation}\label{eq:kkdec}
      K(\lambda,w,\mu) = \bigoplus_{\nu \in P^+} V(\nu)^{\oplus \lrw}.
 \end{equation}
Given a weight $\gamma$ in the Tits cone, we recall that $\overline{\gamma}$ denotes the unique dominant element in $W\gamma$ (its $W$-orbit). We now record the following simple fact:
\begin{lemma}\label{lem:supplrw}
With notation as above, if $\lrw >0$, then $\lambda+\mu \geq \nu \geq \overline{\lambda+w\mu}$. Further, when $\nu = \lambda+\mu$ or $\nu = \overline{\lambda+w\mu}$, we have $\lrw = 1$.
\end{lemma}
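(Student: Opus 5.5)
Two things have to be shown: the weight bounds $\lambda+\mu\ge\nu\ge\overline{\lambda+w\mu}$ whenever $\lrw>0$, and $\lrw=1$ at the two extreme values of $\nu$.

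\textbf{Upper bound.} $K(\lambda,w,\mu)\subseteq V(\lambda)\otimes V(\mu)$, and every weight of the tensor product is $\le\lambda+\mu$. If $\lrw>0$, then $\nu$ is a weight of $K(\lambda,w,\mu)$, so $\nu\le\lambda+\mu$.

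\textbf{Lower bound.} I would apply Corollary~\ref{cor:hwtVZ} with $V=K(\lambda,w,\mu)$ and $Z=v_\lambda\otimes V_w(\mu)$.
\begin{enumerate}
\item $Z$ is $\lie{Ub}$-stable, since $v_\lambda$ is $\lie n^+$-invariant and an $\lie h$-eigenvector. Indeed, for $x\in\lie b$ we have $x(v_\lambda\otimes u)=v_\lambda\otimes(x+\langle\lambda,x\rangle)u$ (the second term read as zero for $x\in\lie n^+$).
\item $Z$ generates $V$, by Proposition~\ref{prop:kkprops}(1).
\end{enumerate}
So if $\lrw>0$, then $\nu$ is a weight of $Z$, i.e.\ $\nu-\lambda$ is a weight of $V_w(\mu)$. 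Every weight of the Demazure module $V_w(\mu)=\lie{Ub}\,v_{w\mu}$ has the form $w\mu+\beta$ with $\beta\in Q^+$. Hence $\nu\ge\lambda+w\mu$.

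It remains to pass from $\nu\ge\lambda+w\mu$ to $\nu\ge\overline{\lambda+w\mu}$. Choose $\sigma\in W$ with $\sigma(\lambda+w\mu)=\overline{\lambda+w\mu}$. By Proposition~\ref{prop:kkprops}(2), $K(\lambda,w,\mu)=\lie{Ug}(v_{\sigma\lambda}\otimes v_{\sigma w\mu})$. The cleanest route is to use $W$-invariance of the weight multiset of $V(\nu)$: every weight of $V(\nu)$ is $\le\nu$. The integrable module $K(\lambda,w,\mu)$ has $\overline{\lambda+w\mu}$ as a weight, because its generator has weight $\lambda+w\mu$ and weights of integrable modules are $W$-stable. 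This, however, shows only that some $V(\nu)$ contains that weight, not the bound for every $\nu$.

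So the $W$-twist is essential: I would replay the lower-bound argument with $Z_\sigma=\lie{Ub}(v_{\sigma\lambda}\otimes v_{\sigma w\mu})$. This is $\lie{Ub}$-stable and generates $K(\lambda,w,\mu)$. Its weights are $\ge\sigma\lambda+\sigma w\mu=\overline{\lambda+w\mu}$, since $\lie{Ub}$ applied to a weight vector only raises weights. Corollary~\ref{cor:hwtVZ} then gives $\nu\ge\overline{\lambda+w\mu}$ directly. This removes the need for the $Z$ above; the key observation is just that any cyclic $\lie{Ub}$-module generated by a weight vector of weight $\gamma$ has weights in $\gamma+Q^+$.

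\textbf{Multiplicity one.} For $\nu=\overline{\lambda+w\mu}$, this is Theorem~\ref{thm:kprv}. For $\nu=\lambda+\mu$:
\begin{itemize}
\item The $(\lambda+\mu)$-weight space of $V(\lambda)\otimes V(\mu)$ is one-dimensional, so $\lrw\le c^{\lambda+\mu}_{\lambda\mu}=1$.
\item For positivity, take $Z=v_\lambda\otimes V_w(\mu)$. It contains $v_\lambda\otimes v_\mu$, because $v_\mu\in V_w(\mu)$: $\mu$ is the highest weight and the Demazure module is $\lie n^+$-stable and reaches $v_\mu$. Thus $v_\lambda\otimes v_\mu\in K(\lambda,w,\mu)$. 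It is a highest weight vector of weight $\lambda+\mu$, and so generates a copy of $V(\lambda+\mu)$.
\end{itemize}

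The main obstacle is justifying $v_\mu\in V_w(\mu)$. This is standard: repeated application of the $e_i$ along a reduced word of $w$ carries $v_{w\mu}$ back to a nonzero multiple of $v_\mu$, by $\mathfrak{sl}_2$-string arguments (Lemma~\ref{lem:sl2string}).
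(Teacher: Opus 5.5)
Your proof is correct and is essentially the paper's argument. The lower bound comes from applying Corollary~\ref{cor:hwtVZ} to $\lie{Ub}(v_{\sigma\lambda}\otimes v_{\sigma w\mu})$ with $\sigma(\lambda+w\mu)=\overline{\lambda+w\mu}$, and the upper bound from $K(\lambda,w,\mu)\subseteq V(\lambda)\otimes V(\mu)$. Your untwisted first attempt and the $W$-invariance aside can simply be deleted. The paper cites only Theorem~\ref{thm:kprv} for the multiplicity-one claims, and your explicit handling of $\nu=\lambda+\mu$ is also valid: you use $c^{\lambda+\mu}_{\lambda\mu}=1$ and $v_\lambda\otimes v_\mu\in K(\lambda,w,\mu)$, which is equivalent to $V(\lambda+\mu)\cong K(\lambda,1,\mu)\subseteq K(\lambda,w,\mu)$.
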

\begin{proof}
Let $\sigma \in W$ and define $Z:= \lie{Ub}(v_{\sigma\lambda} \otimes v_{\sigma w\mu}) \subset K(\lambda,w,\mu)=:K$. Note that $Z$ generates $K$ as a $\lie{Ug}$-module  and the weights of $Z$ are all $\geq \sigma(\lambda + w\mu)$. Applying Corollary~\ref{cor:hwtVZ}, we conclude that $\nu \geq \sigma(\lambda + w\mu)$ for all $\sigma \in W$. This implies that $\nu \geq \overline{\lambda+w\mu}$. The other inequality is straightforward. The "extreme" cases follow from Theorem~\ref{thm:kprv}.
\end{proof}
In particular, Lemma~\ref{lem:supplrw} implies that even when $\lie g$ is infinite-dimensional, there are only finitely many direct summands in Equation~\eqref{eq:kkdec}.
We now seek to obtain a description of $\lrw$ that parallels Proposition~\ref{prop:kashi-glob} for $\lr$, i.e., to construct a natural space $\mcdw$ such that \[\lrw = \dim \mcdw.\]

We note that the corresponding description on the combinatorial side is well-known. 
Let $\crys(\mu)$ denote the crystal corresponding to $V(\mu)$, with highest weight element $\hwcrys[\mu] \in \crys(\mu)$. We let $\tilde{e}_i, \tilde{f}_i$ denote the crystal raising and lowering operators corresponding to the simple root $\alpha_i$. Given $w \in W$ with a reduced word $w = s_{i_1} s_{i_2} \cdots s_{i_k}$, let 
\[ \crys_w(\mu) = \left\{ \tilde{f}_{i_1}^{n_1} \tilde{f}_{i_2}^{n_2} \cdots \tilde{f}_{i_k}^{n_k} \hwcrys[\mu]: n_i \geq 0 \text{ for all } i\right\} \subset \crys(\mu)\]
denote the Demazure crystal corresponding to $w$. We then have \cite{J2,KRV}:
\[\lr = \# \crys_{\lambda\mu}^\nu \text{ and } \lrw = \# \left( \crys_{\lambda\mu}^\nu \cap \crys_w(\mu) \right), \text{ where }\]
\begin{equation}\label{eq:crysmult}
\crys_{\lambda\mu}^\nu=\{\hwcrys \in \crys(\mu)_{\nu-\lambda}:\, \tilde{e}_i^{\langle \lambda, \,\alpha_i^{\vee}\rangle +1}\,\hwcrys=0 \;\text{ for } i=1,2, \ldots, n\}.
\end{equation}
Note that  $\dim V_w(\mu) = \# \crys_w(\mu)$, and by Proposition~\ref{prop:kashi-glob} we have $\dim \mcd = \#\crys_{\lambda\mu}^\nu$. 
This close analogy prompts one to consider the space $\mcd\cap V_w(\mu)$ as a natural 
candidate for $\mcdw$. However, it turns out that the dimension of this space does not give the correct multiplicity $\lrw$.
\begin{example}\label{Dintproj}
Let $\mathfrak{g}=\mathfrak{sl}_3$ with fundamental weights denoted $\Lambda_1, \Lambda_2$. Let $\lambda=\Lambda_1$, $ \mu=\Lambda_1+\Lambda_2$, $ w=s_1s_2,$ and $ \nu=\Lambda_1.$ Here 
$\dim (\mcd\cap V_w(\mu)) =0$ while $\lrw=1$. 
\end{example}

\subsection{}\label{sec:mainthmshap}
It turns out that $\mcdw$ is more naturally realized as a quotient rather than as a subspace of $\mcd$. The following is one of the main results of this paper. 
\begin{theorem}\label{thm:shap-proj-w}
    Let $\mathfrak{g}$ be a symmetrizable Kac--Moody Lie algebra. Let $\lambda, \mu, \nu \in P^+$ and $w \in W$.
    Let $\pi_w: V(\mu)\twoheadrightarrow V_w(\mu)$ be the orthogonal projection onto the Demazure module $V_w(\mu)$ with respect to the Shapovalov form. Then, $\mcdw:= \pi_w\left(\mcd\right)$ satisfies $\dim \mcdw=c_{\lambda\mu}^{\nu}(w)$.
    \end{theorem}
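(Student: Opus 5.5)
The plan is to run the argument of Proposition~\ref{prop:pi0-image} twice: once inside $V=V(\lambda)\otimes V(\mu)$ and once inside the submodule $K=K(\lambda,w,\mu)$. Put $Z_0=v_\lambda\otimes V(\mu)$ and $Z_w=v_\lambda\otimes V_w(\mu)$. Equip $V$ with the product Shapovalov form and let $K$ inherit its restriction. With respect to the product form, the orthogonal projection $V\to Z_w$ is the composite of $\pi_0\colon V\to Z_0$ followed by $1\otimes\pi_w$ on $Z_0$. This holds because $Z_w\subseteq Z_0$ and $v_\lambda\otimes V_w(\mu)^\perp$ is orthogonal to $Z_w$. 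Denote this composite by $\Pi_w$.

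\textbf{Step 1.} By Proposition~\ref{prop:kkprops}(1), $Z_w$ is a $\lie{Ub}$-submodule of $K$ that generates $K$. Let $p\colon V\to K$ be the orthogonal projection. Then $\Pi_w|_K$ is the orthogonal projection of $K$ onto $Z_w$, and $\Pi_w=\Pi_w\circ p$ because $Z_w\subseteq K$. Lemma~\ref{lem:gould-injectivity}, applied to $K$ and $Z_w$, shows that $\Pi_w$ is injective on $K^+_\nu$. Hence $\dim \Pi_w(K^+_\nu)=\lrw$.

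\textbf{Step 2.} By Lemma~\ref{lem:shap-props}(3), applied to the submodule $K\subseteq V$, we have $p(V^+_\nu)=K^+_\nu$. Combining this with $\Pi_w=\Pi_w\circ p$ gives $\Pi_w(V^+_\nu)=\Pi_w(K^+_\nu)$.

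\textbf{Step 3.} By Proposition~\ref{prop:pi0-image}, $\pi_0(V^+_\nu)=v_\lambda\otimes\mcd$. Therefore
\[
\Pi_w(V^+_\nu)=(1\otimes\pi_w)(v_\lambda\otimes\mcd)=v_\lambda\otimes\pi_w(\mcd)=v_\lambda\otimes\mcdw .
\]
Putting the three steps together yields $\dim\mcdw=\dim\Pi_w(K^+_\nu)=\lrw$.

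The main point to verify carefully is the factorization $\Pi_w=(1\otimes\pi_w)\circ\pi_0$. Here one must check that the product form restricted to $Z_0\cong V(\mu)$ is a positive multiple of the Shapovalov form on $V(\mu)$, so that orthogonal projections computed in $Z_0$ agree with $\pi_w$. This follows from the formula $\langle v_\lambda\otimes u,v_\lambda\otimes u'\rangle=\langle v_\lambda,v_\lambda\rangle\langle u,u'\rangle$.

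A second point is that Lemma~\ref{lem:shap-props}(3) requires the form on $V$ to be a Shapovalov form, which the product form is. We also need the restriction of this form to $K$ to serve as the Shapovalov form in Lemma~\ref{lem:gould-injectivity}. That lemma only uses positive definiteness and $\omega_0$-contravariance, and both survive restriction to a submodule, so this is fine.
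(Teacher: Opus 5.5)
Your proposal is correct and follows essentially the same route as the paper. The paper's commuting square $\pi_3\pi_2=\pi_1\pi_0$ is your identity $\Pi_w=\Pi_w\circ p=(1\otimes\pi_w)\circ\pi_0$, and it is combined with Proposition~\ref{prop:pi0-image}, Lemma~\ref{lem:shap-props}(3), and Lemma~\ref{lem:gould-injectivity} (applied to $K$ and $v_\lambda\otimes V_w(\mu)$) in exactly the way you describe.
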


\begin{proof}
Consider the following diagram, where all indicated maps are the orthogonal projections with respect to the tensor Shapovalov form on $V(\lambda) \otimes V(\mu)$. Since each $\pi_i$ is a map from a space to its subspace, it is clear that the diagram commutes. We note that $\pi_1 = 1 \otimes \pi_w$.

\[ 
\begin{tikzcd}
V(\lambda) \otimes V(\mu) \arrow[r, two heads, "\pi_0"] \arrow[d, two heads, "\pi_2"] & v_\lambda \otimes V(\mu) \arrow[d, two heads, "\pi_1"] \\
K(\lambda,w,\mu) \arrow[r, two heads, "\pi_3"] & v_\lambda \otimes V_w(\mu)
\end{tikzcd}
\]
For convenience, we set $V = V(\lambda) \otimes V(\mu)$ and $K = K(\lambda,w,\mu)$. For $\nu \in P^+$, let $V^+_\nu$ and $K^+_\nu$ denote the spaces of highest weight vectors of weight $\nu$ in $V$ and $K$ respectively. 

By Proposition~\ref{prop:pi0-image}, $\pi_0(V_\nu^+)= v_{\lambda}\otimes \mcd$. Since $\mcdw = \pi_w(\mcd)$ by definition, we have 
\[\pi_1\pi_0(V^+_\nu) = \pi_1 (v_\lambda \otimes \mcd) = v_\lambda \otimes \mcdw.\] 
It follows from Lemma~\ref{lem:shap-props} that $\pi_2(V^+_\nu) = K^+_\nu$; since $\pi_3\pi_2 (V^+_\nu)= \pi_1\pi_0(V^+_\nu)$, we conclude that $\pi_3(K^+_\nu) = v_\lambda \otimes \mcdw$. But $\pi_3$ is injective on $K^+_\nu$ by Lemma~\ref{lem:gould-injectivity} since $v_\lambda \otimes V_w(\mu)$ is a $\lie{Ub}$-submodule of $K(\lambda,w,\mu)$ that generates it over $\lie{Ug}$.  Thus $\pi_3$ maps $K^+_\nu$ isomorphically onto $v_\lambda \otimes \mcdw$. Since $\lrw = \dim K^+_\nu$, this completes the proof of Theorem~\ref{thm:shap-proj-w}.
\end{proof}

\subsection{}
\begin{example}
Consider $\mathfrak{g}=\mathfrak{sl}_3$ with simple roots $\alpha_1, \alpha_2$. Let $\lambda = \theta, \mu = 2\theta$ where $\theta=\alpha_1+\alpha_2$ is the highest root and let $w=s_1s_2$ and $\nu=\theta$. Then, $c_{\theta,2\theta}^{\theta}(s_1s_2)=0$ as can be verified from \eqref{eq:crysmult}. Further, $\mcd=\ker (e_1^2)\cap \ker (e_2^2)$ in the set of $0$-weight vectors of $V(2\theta)$. In this case, we can compute the Shapovalov form and obtain that 
$ \mcd\perp V_{s_1s_2}(2\theta)$, as Theorem~\ref{thm:shap-proj-w} asserts.
\end{example}

\subsection{}
We have the following corollary that provides vanishing and stability criteria for the $\lrw$. This follows readily from Theorem~\ref{thm:shap-proj-w}.
\begin{corollary}
\begin{enumerate}
    \item 
\begin{equation}\label{eq:comult}
    c_{\lambda\mu}^\nu(w)= \dim \left(\frac{\mcd}{\mcd \cap V_w(\mu)^\perp}\right).
    \end{equation}
\item $\lrw =0 \iff \mcd \perp V_w(\mu).$
\item $\lrw = \lr \iff \mcd \cap V_w(\mu)^\perp = (0)$. 
\end{enumerate}
\end{corollary}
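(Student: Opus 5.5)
The corollary is a formal consequence of Theorem~\ref{thm:shap-proj-w}, which gives $\lrw=\dim \pi_w(\mcd)$. The plan is to express $\pi_w(\mcd)$ as a quotient of $\mcd$ and then read off all three statements.

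The key observation is that the kernel of the orthogonal projection $\pi_w\colon V(\mu)\twoheadrightarrow V_w(\mu)$ is exactly $V_w(\mu)^\perp$. This uses the positive definiteness of the Shapovalov form, which gives $V(\mu)=V_w(\mu)\oplus V_w(\mu)^\perp$. To avoid any subtlety with infinite-dimensional $V(\mu)$, I would work inside the single weight space $V(\mu)_{\nu-\lambda}$. This space is finite-dimensional and contains $\mcd$. By Lemma~\ref{lem:shap-props}(1), and because $V_w(\mu)$ is a sum of weight spaces, $\pi_w$ restricts to the orthogonal projection of $V(\mu)_{\nu-\lambda}$ onto $V_w(\mu)_{\nu-\lambda}$. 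Its kernel there is $V(\mu)_{\nu-\lambda}\cap V_w(\mu)^\perp$.

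With this in hand, restricting $\pi_w$ to $\mcd$ gives a surjection $\mcd\twoheadrightarrow \mcdw$ with kernel $\mcd\cap V_w(\mu)^\perp$. Rank--nullity, or the first isomorphism theorem, then yields
\[ \mcdw \cong \mcd/(\mcd\cap V_w(\mu)^\perp). \]
Combined with Theorem~\ref{thm:shap-proj-w}, this proves (1).

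For (2), $\lrw=0$ holds iff the quotient in (1) vanishes. That happens iff $\mcd\subseteq V_w(\mu)^\perp$, i.e.\ iff $\mcd\perp V_w(\mu)$.

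For (3), Proposition~\ref{prop:kashi-glob} gives $\dim\mcd=\lr$, and this dimension is finite. So by (1), $\lrw=\lr$ iff $\dim(\mcd\cap V_w(\mu)^\perp)=0$.

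There is no genuine obstacle here. The only point needing care is the identification $\ker\pi_w=V_w(\mu)^\perp$, and this is immediate once one works weight space by weight space.
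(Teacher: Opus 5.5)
Your proposal is correct and follows essentially the paper's route: the paper just says the corollary ``follows readily'' from Theorem~\ref{thm:shap-proj-w}, and your argument fills this in. The kernel of $\pi_w$ restricted to $\mcd$ is $\mcd\cap V_w(\mu)^\perp$, so $\mcdw\cong\mcd/(\mcd\cap V_w(\mu)^\perp)$, and parts (2) and (3) then follow from $\dim\mcd=\lr$ (Proposition~\ref{prop:kashi-glob}).
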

Note that the second assertion above gives an elegant criterion for the vanishing of $\lrw$. This happens precisely when the KPRV multiplicity space $\mcd$ and the Demazure module $V_w(\mu)$ are orthogonal relative to the Shapovalov form.

It turns out that the space in Equation~\eqref{eq:comult} has properties that are better aligned to those of the Kostant--Kumar module $K(\lambda,w,\mu)$.  We have for instance ({\em cf.} Proposition~\ref{prop:kkprops}):
\begin{proposition}
    The space $\frac{\mcd}{\mcd \cap V_w(\mu)^\perp}$ is invariant with respect to the choice of representative $w$ in the double coset $W_\lambda wW_\mu$ where $W_\lambda$ and $W_\mu$ are stabilizers  in $W$ of $\lambda$ and $\mu$ respectively. 
\end{proposition}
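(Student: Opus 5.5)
The plan is to show that the subspace $\mcd \cap V_w(\mu)^\perp$ of the fixed space $\mcd$ depends only on the double coset $W_\lambda w W_\mu$. The quotient $\mcd/(\mcd\cap V_w(\mu)^\perp)$ is then literally the same space for every representative. I would treat right multiplication by $W_\mu$ and left multiplication by $W_\lambda$ separately.

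\emph{Right $W_\mu$-invariance.} For $u \in W_\mu$ we have $wu\mu = w\mu$. So $v_{wu\mu}$ and $v_{w\mu}$ are both vectors of the extremal weight $w\mu$, whose weight space in $V(\mu)$ is one-dimensional. Hence $V_{wu}(\mu) = \lie{Ub}\,v_{w\mu} = V_w(\mu)$, the orthogonal complements agree, and there is nothing more to prove.

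\emph{Left $W_\lambda$-invariance.} Since $\lambda$ is dominant, $W_\lambda$ is generated by the simple reflections $s_i$ with $\langle \lambda,\alpha_i^\vee\rangle = 0$. It therefore suffices to compare $w$ and $s_iw$ for such an $i$, and by symmetry we may assume $s_iw > w$. I would argue in two steps.

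First, $V_w(\mu)\subseteq V_{s_iw}(\mu)$. Since $s_iw>w$, the $\alpha_i$-string through $s_iw\mu$ shows that $v_{w\mu}$ is a nonzero multiple of $e_i^m v_{s_iw\mu}$ for suitable $m\ge 0$; this is the standard Demazure containment. Taking orthogonal complements gives
\[\mcd\cap V_{s_iw}(\mu)^\perp \subseteq \mcd\cap V_w(\mu)^\perp .\]

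Second, the two quotients have the same dimension. By Theorem~\ref{thm:shap-proj-w}, equivalently Equation~\eqref{eq:comult}, the quotients by these subspaces have dimensions $c^\nu_{\lambda\mu}(s_iw)$ and $c^\nu_{\lambda\mu}(w)$. By Proposition~\ref{prop:kkprops}(3) we have $K(\lambda,s_iw,\mu)=K(\lambda,w,\mu)$, so these multiplicities coincide. The two subspaces therefore have equal codimension in the finite-dimensional space $\mcd$, and together with the inclusion this forces equality.

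As a cross-check, and as an alternative route independent of the theorem, one can prove the reverse inclusion directly. Take $v\in\mcd$ with $\langle \lambda,\alpha_i^\vee\rangle=0$. Then $e_i v = 0$, so contravariance gives $\langle v, f_i^k x\rangle = \langle e_i^k v, x\rangle = 0$ for every $k\ge1$ and every $x \in V(\mu)$. This would finish the argument provided one knows $V_{s_iw}(\mu)=\sum_k f_i^k V_w(\mu)$.

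I expect that last identity to be the only genuinely delicate point, since it requires the $\mathfrak p_i$-stability of Demazure modules. That is why I would use the dimension-counting argument above as the main proof: it needs only the easy containment $V_w(\mu)\subseteq V_{s_iw}(\mu)$.
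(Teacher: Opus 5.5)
Your proof is correct. The paper, however, proves the key step differently: the reverse inclusion $\mcd\cap V_w(\mu)^\perp\subseteq\mcd\cap V_{s_iw}(\mu)^\perp$ (for $s_i\in W_\lambda$, $s_iw>w$).

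\textbf{The paper's route.} It proves this inclusion directly, by exactly the argument you label a cross-check. It writes elements of $V_{s_iw}(\mu)$ as (sums of) $f_i^kx$ with $x\in V_w(\mu)$. Then it uses contravariance together with $e_iv=0$, which holds because $\langle\lambda,\alpha_i^\vee\rangle=0$.

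\textbf{Your route.} You get equality from the easy containment $V_w(\mu)\subseteq V_{s_iw}(\mu)$ plus an equal-codimension count in the finite-dimensional space $\mcd$. The count comes from Theorem~\ref{thm:shap-proj-w} together with $K(\lambda,s_iw,\mu)=K(\lambda,w,\mu)$ from Proposition~\ref{prop:kkprops}(3). This is legitimate, since the statement comes after the theorem.

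\textbf{What each buys.} Your argument derives the invariance of the quotient from the invariance of the Kostant--Kumar module itself. It also depends on the full machinery behind the theorem (Proposition~\ref{prop:kashi-glob}, Lemma~\ref{lem:gould-injectivity}). The paper's argument is intrinsic and elementary. Combined with Theorem~\ref{thm:shap-proj-w}, it even reproves that $\lrw$ depends only on $W_\lambda wW_\mu$. That is the point of the subsequent remark calling this quotient a canonical model for $\lrw$.

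\textbf{The step you flag as delicate is not.} The reverse inclusion only needs the containment $V_{s_iw}(\mu)\subseteq\sum_{k\ge0}f_i^kV_w(\mu)$. Set $\mathfrak{p}_i=\mathfrak{b}\oplus\mathbb{C}f_i$. By PBW, $\mathfrak{U}(\mathfrak{p}_i)=\mathbb{C}[f_i]\,\mathfrak{Ub}$, so the right-hand side equals $\mathfrak{U}(\mathfrak{p}_i)\,V_w(\mu)$ and is therefore $\mathfrak{b}$-stable. It also contains $v_{s_iw\mu}$, which is a nonzero multiple of $f_i^{m}v_{w\mu}$ with $m=\langle w\mu,\alpha_i^\vee\rangle\ge 0$. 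Hence it contains $\mathfrak{Ub}\,v_{s_iw\mu}=V_{s_iw}(\mu)$. So the direct route costs nothing beyond what you already use.
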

\begin{proof}
Let $\mcmw := \mcd \cap V_w(\mu)^\perp$. Since (i) $V_{w\tau}(\mu) = V_w(\mu)$ for all $\tau \in W_\mu$ and (ii) $W_\lambda$ is generated by the simple reflections it contains, it suffices to prove that $\mcmw[s_iw] = \mcmw$ for all $s_i \in W_\lambda$. Replacing $w$ by $s_i w$ if necessary, we may assume without loss of generality that $s_iw>w$ in the Bruhat order on $W$. In this case, 
$V_{s_iw}(\mu)=\mathrm{span} \, \{f_i^n v: v \in V_w(\mu), n \geq 0\} \supset V_w(\mu)$. Consequently, $\mcmw[s_iw] \subset \mcmw$.

 Conversely, given $v\in \mcmw$, we need to show that $v \perp V_{s_iw}(\mu)$. Note that since $s_i \in W_\lambda$, we have $\langle \lambda, \alpha_i^\vee \rangle =0$. A typical element $y \in V_{s_iw}(\mu)$ can be written $y = f_i^kx$ for some $x \in V_w(\mu)$ and $k\geq 0$. If $k=0,$ $y \in V_w(\mu)$ and $v \perp y$.  If $k>0,$ $\langle v,f_i^kx \rangle= \langle e_iv,f_i^{k-1}x\rangle$. Now, $v\in \mcd$ ensures that $e_i^{\langle\lambda,\alpha_i^\vee\rangle+1}v= e_i v = 0$. Thus $v \perp y$ in this case as well. 
 \end{proof}

\begin{rem}
We may thus view $\frac{\mcd}{\mcd \cap V_w(\mu)^\perp}$ as a canonical "model" for $\lrw$, possessing invariance under choice of double-coset representative in $W_\lambda w W_\mu$. In contrast, the space $\pi_w(\mcd)$ of Theorem~\ref{thm:shap-proj-w} is not invariant; only its dimension remains the same. For instance, we consider the modules of example~\ref{Dintproj}, with $\lambda = \Lambda_1$ and $\mu = \Lambda_1 + \Lambda_2$ for $\lie g = \lie{sl}_3$. Since $s_2$ is in the stabilizer of $\lambda,$ $K(\lambda,s_2s_1s_2,\mu)=K(\lambda,s_1s_2,\mu).$ However, $\pi_{s_1s_2s_1}(\mcd)\neq \pi_{s_1s_2}(\mcd)$; the former is all of $\mcd$ while the latter cannot equal $\mcd$ as Example~\ref{Dintproj} shows.  
\end{rem}

\section{A presentation for Kostant--Kumar modules}\label{sec:genrelkk}
\subsection{Demazure modules}
Let  $\mathfrak{g}$ be a symmetrizable Kac--Moody algebra. Given $\gamma \in P$ and $\alpha \in \posreal$, define 
\begin{align}
    p_\alpha(\gamma) &:= \max (0, -\langle \gamma, \alpha^\vee\rangle) \label{eq:palpha}\\
    q_\alpha(\gamma) &:= \max (0, \langle \gamma, \alpha^\vee\rangle) \label{eq:qalpha}
\end{align}

The following presentation for Demazure modules was proved by Joseph \cite{J1} (characteristic zero) and Polo \cite{P1} (characteristic free) in the finite-dimensional case. Mathieu \cite[Lemma 26]{M1} showed that Polo's argument can be easily adapted to establish the corresponding result for all symmetrizable Kac--Moody Lie algebras. 

\begin{proposition}\label{prop:dempres}
    Let $\mathfrak{g}$ be a symmetrizable Kac--Moody algebra and let $\mu\in P^+, w\in W$.  Consider the Demazure module $V_w(\mu)=\mathfrak{Ub} \,v_{w\mu}$. 
    Let $I$ be the left ideal of $\mathfrak{Ub}$ generated by the elements \begin{enumerate}
        \item $e_\alpha^{\,p_\alpha (w\mu)+1}, \;\; \alpha \in \posreal$.
        \smallskip\item ${\lie g}_\beta, \;\; \beta \in \posimag$.
        \smallskip\item $ h - \langle w\mu, h \rangle 1, \;\; h \in \mathfrak{h}$.
    \end{enumerate}
    Then $I=\ann_{\,\mathfrak{Ub}}(v_{w\mu})$, and $V_w(\mu) \cong \mathfrak{Ub}/I$.
\end{proposition}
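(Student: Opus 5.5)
The inclusion $I \subseteq \ann_{\lie{Ub}}(v_{w\mu})$ is the easy half, and I would dispose of it first. The Cartan relations hold because $v_{w\mu}$ has weight $w\mu$. For $\beta\in\posimag$, the weight $w\mu+\beta$ is not a weight of $V(\mu)$: since $w^{-1}\beta$ is again a positive imaginary root, we would get $\mu + w^{-1}\beta \le \mu$, which is impossible. So ${\lie g}_\beta v_{w\mu}=0$.

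For $\alpha\in\posreal$, I would apply Lemma~\ref{lem:sl2string} to the $\mathfrak{sl}_2$ attached to $\alpha$. The vector $v_{w\mu}$ is extremal, so its $\alpha$-string is as short as possible: $e_\alpha^{p+1}v_{w\mu}=0$ with $p=\max(0,-\langle w\mu,\alpha^\vee\rangle)$. Indeed, $w\mu+k\alpha$ is a weight of $V(\mu)$ only for $k$ between $0$ and $-\langle w\mu,\alpha^\vee\rangle$, since $\dim V(\mu)_{w\mu}=1$ and the $W$-invariance of weights forces the string through $w\mu$ to be the full string. It follows that $\mathfrak{Ub}/I \twoheadrightarrow V_w(\mu)$.

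The substance is the reverse inequality, showing $\mathfrak{Ub}/I$ is no bigger than $V_w(\mu)$. Following Polo and Mathieu, I would induct on $\ell(w)$. The base case $w=1$ is immediate: the relations $e_\alpha$ for all real $\alpha$, together with ${\lie g}_\beta$ for imaginary $\beta$, kill all of $\lie n^+$, so $\mathfrak{Ub}/I=\mathbb{C}v_\mu$.

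For the inductive step, write $w=s_iw'$ with $w'<w$, and set $M_w:=\mathfrak{Ub}/I_w$. I would build the minimal parabolic $\lie p_i=\lie b\oplus\mathbb{C}f_i$ and use the Demazure character formula. Concretely, I would show that $M_w$ is a quotient of the induced/Joseph functor $D_i(M_{w'})$, namely the largest $\lie p_i$-integrable quotient of $\mathfrak{Up}_i\otimes_{\mathfrak{Ub}}M_{w'}$, restricted to $\lie b$. To do this:
\begin{enumerate}
\item I would produce a vector of weight $w'\mu$ in $M_w$, obtained from the generator by applying a power of $e_i$.
\item I would check that this vector satisfies the relations $I_{w'}$. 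The key point is that $s_i$ permutes $\posreal\setminus\{\alpha_i\}$ and $\posimag$, and this is what transforms $I_w$ into $I_{w'}$.
\item I would check that $M_w$ is generated over $\mathfrak{Ub}$ by $\lie{U}(\mathbb{C}e_i)$ applied to that image.
\end{enumerate}
Joseph's/Mathieu's exactness of $D_i$ on Demazure modules then gives $\operatorname{ch} M_w\le D_i(\operatorname{ch}V_{w'}(\mu))=\operatorname{ch}V_w(\mu)$, which combined with the surjection completes the proof.

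The main obstacle is the second step, together with making $M_w$ a $\lie p_i$-module at all. Since $f_i\notin\lie b$, one cannot act on $M_w$ directly. I would instead work in the $\lie p_i$-module $\mathfrak{Up}_i\otimes_{\mathfrak{Ub}}M_{w'}$ and compare annihilators there, using the relation $e_{\alpha_i}^{p_{\alpha_i}(w\mu)+1}$ to get $\mathfrak{sl}_2(\alpha_i)$-local finiteness. Invoking the Kac–Moody version via Mathieu's Lemma~26, as the paper itself does, is the fallback.
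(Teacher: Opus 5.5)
Your proof of the easy inclusion $I\subseteq\ann_{\mathfrak{Ub}}(v_{w\mu})$ is fine. For real $\alpha$, a cleaner reason is that $w\mu+\alpha$ (if $w^{-1}\alpha>0$), resp.\ $w\mu-\alpha$ (if $w^{-1}\alpha<0$), is not a weight of $V(\mu)$, and then Lemma~\ref{lem:sl2string} applies; $\dim V(\mu)_{w\mu}=1$ plays no role. The paper does not prove the reverse inclusion. It quotes it from Joseph and Polo, and from Mathieu's Lemma~26 for the Kac--Moody case. So your fallback is exactly what the paper does. However, the inductive argument you offer in its place has a genuine gap.

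\textbf{The missing step.} Write $m_w$ for the generator of $M_w=\mathfrak{Ub}/I_w$ and set $n=\langle w'\mu,\alpha_i^\vee\rangle$. To get $M_w$ as a quotient of $D_i(M_{w'})$ from the universal property of $D_i$, you need $M_w$ itself to be an $\mathfrak{sl}_2^{(i)}$-integrable $\mathfrak{p}_i$-module with $f_i m_w=0$. Steps 2 and 3 silently presuppose this.
\begin{itemize}
\item In step 2, the fact $p_{s_i\alpha}(w'\mu)=p_\alpha(w\mu)$ only shows that $I_{w'}$ kills $e_i^n m_w$ if an operator $\tilde{s}_i=\exp(e_i)\exp(-f_i)\exp(e_i)$ acts on $M_w$.
\item Step 3 is vacuous as written: $e_i\in\mathfrak b$, and for $n>0$ the module $\mathfrak{Ub}\,e_i^n m_w$ has all weights $\ge w'\mu$, so it misses $m_w$. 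With $f_i$ in place of $e_i$ it again needs the $\mathfrak p_i$-action.
\end{itemize}

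\textbf{Your workaround runs in the wrong direction.} Take $Q=D_i(M_{w'})=\mathfrak{Up}_i/\mathfrak{Up}_i(I_{w'}+\mathbb{C}f_i^{n+1})$. Twisting by $\tilde{s}_i$, which does act on $Q$, shows that $y=f_i^n\bar{1}$ satisfies the relations $I_w$ and that $Q=\mathfrak{Ub}\,y$. That yields $M_w\twoheadrightarrow Q\cong V_w(\mu)$, which is only the lower bound you already had.

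Twisting back gives $\ann_{\mathfrak{Ub}}(y)=\mathfrak{Ub}\cap\mathfrak{Up}_i(I_w+\mathbb{C}f_i)$. So what you actually need is the identity $\mathfrak{Ub}\cap\mathfrak{Up}_i(I_w+\mathbb{C}f_i)=I_w$. Equivalently, $I_w$, viewed inside $\mathfrak{Up}_i/\mathfrak{Up}_i f_i\cong\mathfrak{Ub}$, must be stable under $f_i$.

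Given the induction hypothesis and $D_i(V_{w'}(\mu))\cong V_w(\mu)$, this identity is equivalent to the proposition itself. Note that this isomorphism is the Demazure character formula input (Joseph, Kumar, Mathieu), not mere exactness of $D_i$. Proving the identity means showing that $[f_i,e_\alpha^{\,p_\alpha(w\mu)+1}]$ for $\alpha\neq\alpha_i$, and $[f_i,\mathfrak g_\beta]$ for $\beta\in\posimag$, reduce into $I_w$ modulo $\mathfrak{Up}_i f_i$. In the second case $\beta-\alpha_i$ can even be a real root, as $\delta-\alpha_1=\alpha_0$ in affine $\mathfrak{sl}_2$. This is the substantive step, and your outline does not supply it.
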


\subsection{Generators and relations for Kostant--Kumar modules}\label{sec:mainthmgenrel}
The following result, which gives a presentation for Kostant--Kumar modules is one of the main results of this paper.

\begin{theorem}\label{thm:gen-rel}
    Let $\mathfrak{g}$ be a finite dimensional semisimple Lie algebra or a symmetric Kac--Moody Lie algebra. Consider the Kostant--Kumar module $K(\lambda,w,\mu)=\mathfrak{Ug} (v_\lambda\otimes v_{w\mu})$. Let $J$ be the left ideal of $U\mathfrak g$ generated by the elements:  
    \begin{enumerate}
        \item $e_\alpha^{\,p_\alpha (w\mu)+1}, \;\; \alpha \in \posreal$.
        \smallskip\item $f_\alpha^{\,\langle \lambda, \alpha^\vee \rangle + q_\alpha (w\mu)+1}, \;\; \alpha \in \posreal$.
        \smallskip\item ${\lie g}_\beta, \;\; \beta \in \posimag$.
        \smallskip\item $ h - \langle \lambda + w\mu, h \rangle 1, \;\; h \in \mathfrak{h}$.
    \end{enumerate}
    Then $J=\ann_{\,\mathfrak{Ug}} (v_\lambda \otimes v_{w\mu})$, and $K(\lambda, w, \mu) \cong \mathfrak{Ug}/J$.
\end{theorem}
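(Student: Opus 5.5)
The plan is to show $J\subseteq\ann_{\lie{Ug}}(v_\lambda\otimes v_{w\mu})$ directly, which gives a surjection $M:=\lie{Ug}/J\twoheadrightarrow K(\lambda,w,\mu)$. I would then prove that this surjection is an isomorphism by comparing $\Hom_{\lie{Ug}}(-,V(\nu))$ for every $\nu\in P^+$. The comparison uses the restriction isomorphism $\res$ of Theorem~\ref{thm:UbUg}, stated in the introduction, which is the key input.

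\textbf{Step 1: $J$ annihilates the generator.} Put $v=v_\lambda\otimes v_{w\mu}$.
\begin{itemize}
\item Relations (1), (3) and (4) kill $v$. Indeed, $\lie n^+$ kills $v_\lambda$, so the $\lie{Ub}$-action on $v_\lambda\otimes V_w(\mu)$ is the action on $V_w(\mu)$ twisted by the character $\lambda$ of $\lie h$. Hence these relations kill $v$ by Proposition~\ref{prop:dempres}.
\item Relation (2) kills $v$. The weight of $v$ is $\gamma=\lambda+w\mu$, and $e_\alpha^{p_\alpha(w\mu)+1}v=0$. By Lemma~\ref{lem:sl2string}, $f_\alpha^{\langle\gamma,\alpha^\vee\rangle+p_\alpha(w\mu)+1}v=0$. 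Since $\langle w\mu,\alpha^\vee\rangle+p_\alpha(w\mu)=q_\alpha(w\mu)$, this exponent is exactly $\langle\lambda,\alpha^\vee\rangle+q_\alpha(w\mu)+1$.
\end{itemize}

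\textbf{Step 2: $M$ lies in $\catoint$.} Let $\bar 1$ be the image of $1$ in $M$.
\begin{itemize}
\item The left ideal $J\cap\lie{Ub}$ contains the $\lambda$-shift of the ideal $I$ of Proposition~\ref{prop:dempres}. So $\lie{Ub}\,\bar1$ is a quotient of $\mathbb C_\lambda\otimes V_w(\mu)$, and in particular it is finite-dimensional.
\item Since $M=\lie{Un}^-\lie{Ub}\,\bar1$, the weights of $M$ are bounded above by finitely many weights, and $\lie h$ acts semisimply.
\item Each $e_i$ and $f_i$ acts locally nilpotently on $M$. The reason is that $\bar1$ is killed by a power of each, by relations (1) and (2), and $\operatorname{ad}e_i$, $\operatorname{ad}f_i$ are locally nilpotent on $\lie{Ug}$; the standard argument then applies.
\end{itemize}
Hence $M\in\catoint$, and $M$ is a direct sum of modules $V(\nu)$.

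\textbf{Step 3: comparing multiplicities.} A $\lie{Ug}$-map $M\to V(\nu)$ is determined by the image $x$ of $\bar1$. Such an $x$ satisfies relations (1), (3) and (4). By Proposition~\ref{prop:dempres}, $x$ therefore defines a $\lie{Ub}$-map $v_\lambda\otimes V_w(\mu)\to V(\nu)$ sending $v_\lambda\otimes v_{w\mu}\mapsto x$. This gives an injection
\[
\Hom_{\lie{Ug}}(M,V(\nu))\hookrightarrow\Hom_{\lie{Ub}}\bigl(v_\lambda\otimes V_w(\mu),V(\nu)\bigr)\cong\Hom_{\lie{Ug}}\bigl(K(\lambda,w,\mu),V(\nu)\bigr),
\]
where the last isomorphism is $\res$. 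Thus $[M:V(\nu)]\le\lrw$. The surjection $M\twoheadrightarrow K(\lambda,w,\mu)$ gives the reverse inequality. By complete reducibility the kernel has no irreducible constituents, so it vanishes, and $J=\ann_{\lie{Ug}}(v)$.

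\textbf{Main obstacle.} The real content is Theorem~\ref{thm:UbUg}, and specifically surjectivity of $\res$: every $\lie{Ub}$-map $v_\lambda\otimes V_w(\mu)\to V(\nu)$ must extend to $K(\lambda,w,\mu)$. This is exactly where the hypothesis that $\lie g$ is finite-dimensional or symmetric enters.
\begin{itemize}
\item In finite type this is Kumar's theorem.
\item Otherwise, I would first try to dualize, interpreting $\Hom_{\lie{Ub}}(\mathbb C_\lambda\otimes V_w(\mu),V(\nu))$ as a $\lie b$-cohomology or Ext computation.
\item I would then use the Polo--Mathieu--Joseph excellent filtration of $V(\nu)^*\otimes\mathbb C_\lambda$ (or of $V_w(\mu)\otimes$ a Demazure module) by "opposite" Demazure modules. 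The aim is to get vanishing of the relevant $\operatorname{Ext}^1$, which forces the dimension count to match $\lrw$ as computed in Theorem~\ref{thm:shap-proj-w}.
\end{itemize}
Injectivity of $\res$ is easy, since $v_\lambda\otimes V_w(\mu)$ generates $K(\lambda,w,\mu)$.
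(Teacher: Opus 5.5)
Your Steps 1--3 are correct and reproduce the paper's reduction. Step 1 is Proposition~\ref{prop:ugj-ubjprime}(2). Step 2 is Proposition~\ref{prop:catoint}; your observation that $\lie{Ub}\,\bar 1$ is a quotient of $\lie{Ub}/J'\cong\mathbb{C}_\lambda\otimes V_w(\mu)$ is a clean substitute for the paper's $\mathfrak{n}(w)$/PBW argument. You should also state that the weight spaces of $M$ are finite-dimensional, which follows the same way. Step 3 is the argument of Section~\ref{sec:proofends}, and there you only need the injection $\Hom_{\lie{Ug}}(M,V(\nu))\hookrightarrow\Hom_{\lie{Ub}}(\mathbb{C}_\lambda\otimes V_w(\mu),V(\nu))$ rather than the full isomorphism $\Res$. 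So in finite type, where Theorem~\ref{thm:UbUg} is Kumar's theorem, your proof is complete.

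The gap is in the symmetric Kac--Moody case. There Theorem~\ref{thm:UbUg} is not an available input: it is new in this paper, its proof is part of the proof of the present theorem, and it is exactly where the symmetric hypothesis enters. Your sketch for it is not yet an argument. You do not say which $\operatorname{Ext}^1$ should vanish or why. More seriously, nothing in the sketch produces the number $\lrw$. Theorem~\ref{thm:shap-proj-w} identifies $\lrw$ with $\dim\pi_w(\mcd)$, and that gives no control over $\Hom_{\lie{Ub}}(v_\lambda\otimes V_w(\mu),V(\nu))$.

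The missing observation is that no Ext vanishing is needed. Since $\res$ is automatically injective, only the upper bound $\dim\Hom_{\lie{Ub}}(v_\lambda\otimes V_w(\mu),V(\nu))\le\lrw$ is required. The paper obtains it as follows.
\begin{enumerate}
\item Filter $v_\lambda\otimes V_w(\mu)$ itself, not $V(\nu)^*$, by Proposition~\ref{prop:excellent-joseph}. The subquotients are Demazure modules $V_{\sigma_i}(\gamma_i)$, and, crucially, the same $\gamma_i$ satisfy $K(\lambda,w,\mu)\cong\bigoplus_i V(\gamma_i)$.
\item Lemma~\ref{lem:demhom} gives $\dim\Hom_{\lie{Ub}}(V_{\sigma_i}(\gamma_i),V(\nu))=1$ if $\gamma_i=\nu$ and $0$ otherwise.
\item Left exactness of $\Hom_{\lie{Ub}}(-,V(\nu))$ along the filtration then sums to the bound $\#\{i:\gamma_i=\nu\}=\lrw$.
\end{enumerate}
Without the identification of the $\gamma_i$ with the summands of $K(\lambda,w,\mu)$, no vanishing statement can force the count to equal $\lrw$.
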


Before embarking on the proof of Theorem~\ref{thm:gen-rel}, two well-known special cases are worth mentioning. These are the subject of the following remarks.
\begin{rem}\label{rem:hwtreprel}
If $\lie g$ is as in Theorem~\ref{thm:gen-rel} (or more generally an arbitrary symmetrizable Kac--Moody algebra) and $w=1$, then $K(\lambda,w,\mu) \cong V(\lambda+\mu)$. As is well-known \cite[Corollary 10.4]{Kac}, the annihilator of $v_\lambda \otimes v_\mu$ in $\lie{Ug}$ is generated by the elements:
\[ h - \langle \lambda+\mu, h \rangle \,1\;\; \text{ and } \;\;e_{\alpha}, \; f_{\alpha}^{\,\langle \lambda +\mu, \alpha^\vee \rangle + 1} \text{ for } \alpha \text{ simple and } h \in \lie h.\]
Since $p_\alpha(\mu)=0$ and $q_\alpha(\mu) = \langle \mu, \alpha^\vee \rangle$, the above relations are a subset of the ones in Theorem~\ref{thm:gen-rel}. It is easy to show that these two sets of relations generate the same left ideal of $\lie{Ug}$.
\end{rem}

\begin{rem}\label{rem:prvrel}
When $\lie g$ is finite-dimensional and $w=w_0$, we have $K(\lambda, w_0, \mu) \cong V(\lambda) \otimes V(\mu)$. In this case, the annihilator $\ann_{\,\mathfrak{Ug}} (v_\lambda \otimes v_{w_0\mu})$ is already generated by the following subset of the elements listed in Theorem~\ref{thm:gen-rel}:
\[ h - \langle \lambda+w_0\mu, h \rangle \,1\;\; \text{ and } \;\;e_{\alpha}^{-\langle w_0\mu, \alpha^\vee \rangle + 1}, \; f_{\alpha}^{\,\langle \lambda, \alpha^\vee \rangle + 1} \text{ for } \alpha \text{ simple and } h \in \lie h.\]
In others words, in relations (1), (2) of Theorem~\ref{thm:gen-rel}, we only take  the Chevalley generators corresponding to the simple roots. 
This result is implicit in \cite{PRV} (see also \cite{Kum2, GE}), and readily follows from Theorem~\ref{thm:gen-rel}. 
\end{rem}

Finally, we remark that when $\mathfrak{g}$ is infinite-dimensional, the restriction that the GCM of $\mathfrak{g}$ be symmetric is inherited from a result on excellent filtrations in this setting (Proposition~\ref{prop:excellent-joseph} below). This latter result is expected to hold without this assumption.

The proof of Theorem~\ref{thm:gen-rel} occupies Sections \ref{sec:oint}, \ref{sec:UbUg} and \ref{sec:zzz}.

\subsection{} \label{sec:oint}
As a first step in the proof of Theorem~\ref{thm:gen-rel}, we establish:

\begin{proposition}\label{prop:catoint}
In the notation of Theorem~\ref{thm:gen-rel}, the $\mathfrak{g}$-module $\mathfrak{Ug}/J$ is in $\catoint$. 
\end{proposition}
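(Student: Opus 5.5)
Write $M=\lie{Ug}/J$ and $\bar 1$ for the image of $1$. By definition, $M$ is in $\catoint$ if it satisfies three conditions: (a) it is $\lie h$-diagonalizable with finite-dimensional weight spaces; (b) its weights lie in finitely many cones $\gamma - Q^+$; (c) every $e_i$ and $f_i$ acts locally nilpotently. The plan is to check these directly from the generators of $J$, using that $M$ is cyclic on the weight vector $\bar 1$.

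\textbf{Weights and local nilpotence.} The Cartan relations (4) make $\bar 1$ a weight vector of weight $\lambda+w\mu$. Hence $M=\lie{Ug}\,\bar 1$ is a weight module whose weights lie in $\lambda+w\mu+Q$. For local nilpotence, recall that a $\lie g$-module generated by a vector on which each $e_i,f_i$ acts nilpotently is integrable. The reason is that $\{x\in M: e_i,f_i \text{ act locally nilpotently on } x\}$ is a $\lie g$-submodule: it is stable under $\mathrm{ad}$-locally-nilpotent elements, by the standard identity $x^N y=\sum\binom{N}{k}(\mathrm{ad}\,x)^k(y)\,x^{N-k}$ \cite[Lemma 3.4]{Kac}. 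Relations (1) and (2) for $\alpha=\alpha_i$ give $e_i^{p_{\alpha_i}(w\mu)+1}\bar 1=0$ and $f_i^{\langle\lambda,\alpha_i^\vee\rangle+q_{\alpha_i}(w\mu)+1}\bar 1=0$. So $M$ is integrable as a module over each $\lie{sl}_2^{(i)}$, and its set of weights is $W$-stable with $\dim M_\gamma=\dim M_{s_i\gamma}$.

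\textbf{Finiteness (the main step).} The obstacle is to show that the weights are bounded above and that weight spaces are finite-dimensional, since $M$ is not a priori a highest weight module. The approach is through the $\lie{Ub}$-submodule $B:=\lie{Ub}\,\bar 1$. Relations (1), (3), (4) lie in the left ideal $I$ of Proposition~\ref{prop:dempres}, and in fact generate $I$ inside $\lie{Ub}$. Hence $B$ is a quotient of $\lie{Ub}/I\cong V_w(\mu)$, shifted by $\lambda$, so $B$ is finite-dimensional. By PBW, $M=\lie{Un}^-\,B$. It follows that all weights of $M$ are $\le$ the finitely many maximal weights of $B$, which gives (b).

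For finite-dimensionality, I would argue as follows. $M$ is an integrable weight module whose weights lie in a finite union of cones $\gamma_j-Q^+$. Pick a finite-dimensional $\lie h$-stable generating subspace of $B$, and filter $M$ by $\lie g$-submodules generated by weight vectors of $B$, taken in decreasing order of weight. Each successive quotient is generated by a single vector $v$ that is annihilated by $\lie n^+$ modulo the previous stage. It is therefore a quotient of a Verma module, and it is integrable, so it is a quotient of the integrable highest weight module $L(\gamma)$. That module is irreducible \cite[Corollary 10.4]{Kac}, so each subquotient is $0$ or $V(\gamma)$. More simply, for finite-dimensional weight spaces it suffices that $M$ has a finite filtration with highest-weight-module subquotients, each having finite-dimensional weight spaces; this follows from $M=\lie{Un}^-B$ with $\dim B<\infty$ together with the upper bound on weights. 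Concretely, $M_\eta\subseteq\sum_{\beta}(\lie{Un}^-)_{-\beta}B_{\eta+\beta}$, a finite sum of finite-dimensional spaces. This gives (a). Finally, since $M$ is integrable and in category $\cato$, complete reducibility \cite[Corollary 10.7]{Kac} places $M$ in $\catoint$.
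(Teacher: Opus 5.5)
Your proof is correct. Its skeleton matches the paper's: $\bar 1$ is an $\lie h$-weight vector, local nilpotence comes from \cite[Lemma 3.4]{Kac}, $B=\lie{Ub}\,\bar 1$ is finite-dimensional, and then $M=\lie{Un}^-B$ together with PBW bounds the weights and the weight spaces.

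The one real difference is how you prove $\dim B<\infty$. The paper does it by hand. Since $e_\alpha\bar 1=0$ whenever $\langle w\mu,\alpha^\vee\rangle\ge 0$ (because then $p_\alpha(w\mu)=0$), and $\lie g_\beta\bar 1=0$ for $\beta\in\posimag$, one gets $B=\mathfrak{Un}(w)\,\bar 1$ with $\mathfrak{n}(w)=\bigoplus_{\alpha\in I(w)}\lie g_\alpha$ finite-dimensional. PBW plus nilpotence of each $e_\alpha$ then finishes. You instead cite the Joseph--Polo--Mathieu presentation (Proposition~\ref{prop:dempres}). You need its hard direction, $\lie{Ub}/I\cong V_w(\mu)$, and not merely $I\subseteq\ann(v_{w\mu})$, since only the former bounds $\lie{Ub}/I$. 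So your route replaces a few lines of elementary argument by a deep theorem. What it buys is the extra fact that $B$ is a quotient of $\mathbb{C}_\lambda\otimes V_w(\mu)$. The paper records this later, as an isomorphism, in Proposition~\ref{prop:ugj-ubjprime}(1), and it is not needed here.

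Three small points. As literally written, relation (4) is not in $I$: relations (1), (3), (4) generate the $\lambda$-twist of $I$ (the ideal the paper calls $J'$), which is what your ``shifted by $\lambda$'' should mean. The detour through filtrations with highest-weight subquotients is superfluous, since your final estimate $M_\eta\subseteq\sum_\beta(\lie{Un}^-)_{-\beta}B_{\eta+\beta}$ already gives finite-dimensional weight spaces. Likewise, complete reducibility is a consequence of membership in $\catoint$, not a requirement for it, since $\catoint$ is by definition the integrable modules in $\cato$.
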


\begin{proof}
Let $v$ denote the coset of $1$ in $M:=\mathfrak{Ug}/J$. Then $M = \lie{Ug} \,v$. 
Recall that the Cartan subalgebra $\lie{h}$ acts diagonalizably on $\lie{Ug}$ by the adjoint action. We also have that $hv = \langle \lambda+w\mu, h\rangle v$ for all $h \in \lie h$ and so $v$ is a common eigenvector for the action of $\lie h$. Together these facts imply that $\lie{h}$ acts diagonalizably on all of $M$.
It is also clear from the explicit generators of $J$ that the $e_\alpha$ and $f_\alpha$ act nilpotently on $v$ for all $\alpha \in \posreal$.
It follows from \cite[Lemma 3.4]{Kac} that the $e_\alpha$ and $f_\alpha$ act locally nilpotently on $M$ for all $\alpha \in \posreal$.

We claim that the space $N:=\lie{Ub}\,v$ is finite-dimensional. To prove this, we observe from the generators of $J$ that: (i) ${\lie g}_\beta \,v=0$ for all $\beta \in \posimag$, (ii) $e_\alpha\, v =0$ for all $\alpha \in \posreal$ such that $\langle w\mu, \alpha^\vee \rangle \geq 0$. Thus, there are only finitely many $\alpha \in \posreal$ such that $e_\alpha v \neq 0$, since any such $\alpha$ necessarily satisfies $\langle w\mu, \alpha^\vee \rangle < 0$, and hence in particular that $w^{-1} \alpha \in \posreal[-]$; recall that the set $I(w)=\{\alpha \in \posreal: w^{-1}\alpha \in \posreal[-]\}$ has cardinality equal to $\ell(w)$, the length of $w$. Following the notation of Mathieu \cite{M1}, we denote $\mathfrak{n}(w) := \oplus_{\alpha \in I(w)} \,\lie{g}_\alpha$. This is a Lie subalgebra of $\lie n^+$ and we  conclude from the above arguments that $N=\lie{Ub}\,v = \mathfrak{Un}(w)\,v$. Since each $e_\alpha$ for $\alpha \in I(w)$ acts locally nilpotently on $N$, it follows from the PBW theorem applied to $\mathfrak{Un}(w)$ that $N$ is finite-dimensional, as claimed.

Now $M = \lie{Ug} \, v = U\lie{n}^- N$. If $\gamma_i, \,i=1, 2, \ldots, k$ denote the finitely many weights of $N$, then clearly the weights of $M$ lie in $\cup_{i=1}^k D(\gamma_i)$ with $D(\gamma) = \{\zeta \in P: \zeta \leq \gamma\}$. Applying the PBW theorem to $\lie{n}^-$ also establishes that $M$ has finite-dimensional weight spaces. This completes the verification of all properties required to establish that $M \in \catoint$.
\end{proof}

\subsection{} \label{sec:UbUg}

Let $\mathfrak{g}$ be a symmetrizable Kac--Moody algebra. Let $\lambda, \mu, \nu \in P^+$ and $w \in W$. The inclusion map $i: v_\lambda \otimes V_w(\mu) \hookrightarrow K(\lambda, w, \mu)$ is $\mathfrak{Ub}$-linear. Since its image generates $K(\lambda, w, \mu)$ as a $\mathfrak{Ug}$-module, the restriction map
$$ \res:  \Hom_{\,\mathfrak{Ug}} (K(\lambda, w, \mu), V(\nu)) \to \Hom_{\,\mathfrak{Ub}} (v_\lambda \otimes V_w(\mu), V(\nu))$$
 is injective. When $\mathfrak g$ is finite-dimensional, it is a theorem of Kumar \cite{Kum1}, \cite{Kum2} that $\res$ is an isomorphism (of vector spaces). The following theorem generalizes this result to all symmetric Kac--Moody algebras. For the simply-laced (i.e., symmetric) finite-dimensional Lie algebras, our proof affords an algebraic alternative to the geometric arguments of Kumar \cite{Kum1}.

\begin{theorem}\label{thm:UbUg}
    Let $\mathfrak{g}$ be a finite-dimensional semisimple Lie algebra or a symmetric Kac--Moody algebra. Let $\lambda, \mu, \nu \in P^+$ and $w \in W$. Then 
\begin{equation} \label{eq:resmap}
     \res:  \Hom_{\,\mathfrak{Ug}} (K(\lambda, w, \mu), V(\nu)) \to \Hom_{\,\mathfrak{Ub}} (v_\lambda \otimes V_w(\mu), V(\nu))
\end{equation}
is an isomorphism of vector spaces.
\end{theorem}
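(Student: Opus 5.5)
Injectivity of $\res$ was already observed, and $\dim \Hom_{\lie{Ug}}(K(\lambda,w,\mu),V(\nu)) = \lrw$ by \eqref{eq:kkdec}. So it suffices to prove the reverse inequality
\[\dim \Hom_{\lie{Ub}}\bigl(v_\lambda\otimes V_w(\mu),V(\nu)\bigr)\le \lrw.\]
I would prove the stronger statement that this dimension equals $\lrw$, computing the right-hand side purely on the Borel side via a filtration of $\mathbb{C}_\lambda\otimes V_w(\mu)\cong v_\lambda\otimes V_w(\mu)$ by Demazure modules.

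The first step is to invoke the excellent filtration results (Polo and Mathieu in finite type, Joseph's result for symmetric GCM, stated later as Proposition~\ref{prop:excellent-joseph}). These say that $\mathbb{C}_\lambda\otimes V_w(\mu)$ admits a finite $\lie{Ub}$-filtration whose successive subquotients are Demazure modules $V_{\tau_j}(\zeta_j)$. On the crystal side this filtration lifts Joseph's decomposition of $b_\lambda\otimes\crys_w(\mu)$ into a disjoint union of Demazure crystals $\crys_{\tau_j}(\zeta_j)$. The highest weight element of $\crys_{\tau_j}(\zeta_j)$ has the form $b_\lambda\otimes b$, where $b\in\crys_{\lambda\mu}^{\zeta_j}\cap\crys_w(\mu)$. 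Hence, by \eqref{eq:crysmult}, the number of indices $j$ with $\zeta_j=\nu$ is exactly $\#(\crys_{\lambda\mu}^\nu\cap\crys_w(\mu))=\lrw$.

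Next I apply $\Hom_{\lie{Ub}}(-,V(\nu))$ along this filtration. Left exactness alone gives
\[\dim \Hom_{\lie{Ub}}\bigl(\mathbb{C}_\lambda\otimes V_w(\mu),V(\nu)\bigr)\le\sum_j \dim\Hom_{\lie{Ub}}\bigl(V_{\tau_j}(\zeta_j),V(\nu)\bigr).\]
Each summand equals $\delta_{\zeta_j\nu}$, which I would check as follows. A $\lie{Ub}$-map $\phi\colon V_\tau(\zeta)\to V(\nu)$ is determined by the image of the generator $v_{\tau\zeta}$. The image of $V_\tau(\zeta)$ is $\lie{Ub}$-stable, so it contains a nonzero $\lie n^+$-invariant whenever $\phi\neq 0$. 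The only such invariants in $V(\nu)$ are multiples of $v_\nu$, so $\nu$ is a weight of $\phi(V_\tau(\zeta))$, which forces $\nu\le\zeta$. On the other hand, $v_\zeta\in V_\tau(\zeta)$ is killed by $\lie n^+$, so $\phi(v_\zeta)$ is a multiple of $v_\nu$ of weight $\zeta$ and hence vanishes unless $\zeta=\nu$. I would then argue that $\phi(v_\zeta)=0$ forces $\phi=0$, using the $\mathfrak{sl}_2$-strings from $v_{\tau\zeta}$ up to $v_\zeta$ together with Lemma~\ref{lem:sl2string}. When $\zeta=\nu$ the unique map up to scalar is the inclusion $V_\tau(\nu)\hookrightarrow V(\nu)$.

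Summing gives $\dim\le\lrw$, which combined with injectivity finishes the proof. I expect the main obstacle to be this last $\mathrm{Hom}$ computation, in particular the claim that $\phi(v_\zeta)=0$ kills $\phi$. If the string argument fails, my fallback is to use the stronger Polo--Mathieu statement that $\mathrm{Ext}^1_{\lie{Ub}}$ from Demazure modules into $V(\nu)$ (or into the relevant dual Demazure modules) vanishes. That vanishing makes the Hom functor exact along the filtration, and the equality then follows by the same count.
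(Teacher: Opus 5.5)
Your proposal is correct and follows essentially the same route as the paper: take the excellent filtration of $v_\lambda\otimes V_w(\mu)$, telescope left exactness of $\Hom_{\lie{Ub}}(-,V(\nu))$ along it, use $\dim\Hom_{\lie{Ub}}(V_\tau(\zeta),V(\nu))=\delta_{\zeta\nu}$ (the paper's Lemma~\ref{lem:demhom}), and close with injectivity of $\res$. The differences are minor: the paper reads off $\#\{j:\zeta_j=\nu\}=\lrw$ directly from $K(\lambda,w,\mu)\cong\bigoplus_i V(\gamma_i)$ in Proposition~\ref{prop:excellent-joseph} rather than via crystals, and your $\mathfrak{sl}_2$-string step does go through (a weight vector $x$ of weight $\gamma$ with $\langle\gamma,\alpha_i^\vee\rangle=-a\le 0$ and $e_i^a x=0$ vanishes by Lemma~\ref{lem:sl2string} with $k=a$, applied inductively along a reduced word), so the $\mathrm{Ext}^1$ fallback is unnecessary.
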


\begin{corollary}
    Under the hypotheses of Theorem~\ref{thm:UbUg}, we have 
    \[\lrw = \dim \{v \in V(\nu)_{\lambda+w\mu}: e_\alpha^{p_\alpha(w\mu)+1} v =0 \text{ for } \alpha \in \posreal \text{ and } {\lie g}_{\beta} v =0 \text{ for } \beta \in \posimag\}.\]
\end{corollary}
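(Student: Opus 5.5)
The corollary should follow from Theorem~\ref{thm:UbUg}, Proposition~\ref{prop:dempres} and complete reducibility. Since $K(\lambda,w,\mu)$ is in $\catoint$ and decomposes as in \eqref{eq:kkdec} with finitely many summands (Lemma~\ref{lem:supplrw}), Schur's lemma for irreducible integrable highest weight modules gives
\[
\dim \Hom_{\mathfrak{Ug}}\bigl(K(\lambda,w,\mu),V(\nu)\bigr)=\lrw .
\]
By Theorem~\ref{thm:UbUg}, $\lrw$ therefore equals $\dim \Hom_{\mathfrak{Ub}}(v_\lambda\otimes V_w(\mu),V(\nu))$. It remains to identify this $\mathfrak{Ub}$-Hom space with the displayed subspace of $V(\nu)_{\lambda+w\mu}$.

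To do this, I will first describe the $\mathfrak{Ub}$-module $v_\lambda\otimes V_w(\mu)$. Since $\mathfrak n^+$ kills $v_\lambda$ and $\mathfrak h$ acts on $v_\lambda$ by $\lambda$, the map $x\mapsto v_\lambda\otimes x$ is a $\mathfrak{Ub}$-isomorphism $\mathbb{C}_\lambda\otimes V_w(\mu)\cong v_\lambda\otimes V_w(\mu)$. Here $\mathbb{C}_\lambda$ is the one-dimensional $\mathfrak{Ub}$-module on which $\mathfrak n^+$ acts by $0$ and $\mathfrak h$ acts by $\lambda$. So $v_\lambda\otimes V_w(\mu)$ is cyclic, generated by $v_\lambda\otimes v_{w\mu}$.

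Next I compute the annihilator of this generator. For $u\in\mathfrak{Un}^+$ one has $u(v_\lambda\otimes v_{w\mu})=v_\lambda\otimes uv_{w\mu}$, while the Cartan part acts through the shifted character $\lambda+w\mu$. Proposition~\ref{prop:dempres} then gives
\[
v_\lambda\otimes V_w(\mu)\cong \mathfrak{Ub}/I',
\]
where $I'$ is the left ideal generated by $e_\alpha^{p_\alpha(w\mu)+1}$ for $\alpha\in\posreal$, by ${\lie g}_\beta$ for $\beta\in\posimag$, and by $h-\langle\lambda+w\mu,h\rangle 1$ for $h\in\lie h$. Concretely, $I'$ is the image of $I$ under the automorphism of $\mathfrak{Ub}$ that fixes $\mathfrak n^+$ and sends $h\mapsto h-\langle\lambda,h\rangle$; this automorphism intertwines $\mathbb{C}_\lambda\otimes V_w(\mu)$ with $V_w(\mu)$.

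Finally, $\Hom_{\mathfrak{Ub}}(\mathfrak{Ub}/I',V(\nu))$ identifies with $\{v\in V(\nu): I'v=0\}$ via $\phi\mapsto\phi(\bar 1)$. The Cartan generators force $v\in V(\nu)_{\lambda+w\mu}$, and the remaining generators give exactly the conditions $e_\alpha^{p_\alpha(w\mu)+1}v=0$ and ${\lie g}_\beta v=0$. This yields the stated formula.

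There is no real obstacle here. The one point needing care is the twist by $\lambda$: one must check that tensoring with $v_\lambda$ changes only the Cartan relations, because $\mathfrak n^+$ acts only on the second factor. All the substance lies in Theorem~\ref{thm:UbUg}.
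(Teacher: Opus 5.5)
Your proof is correct and is essentially the argument the paper intends. The paper gives no separate proof of this corollary, but its ingredients are the same as yours: $\lrw=\dim\Hom_{\mathfrak{Ug}}(K(\lambda,w,\mu),V(\nu))$, then Theorem~\ref{thm:UbUg}, then the identification $v_\lambda\otimes V_w(\mu)\cong\mathfrak{Ub}/J'$ (Proposition~\ref{prop:ugj-ubjprime}(1), obtained from Proposition~\ref{prop:dempres} by twisting the Cartan relations by $\lambda$, as you do), which turns the $\mathfrak{Ub}$-Hom space into the displayed subspace of $V(\nu)_{\lambda+w\mu}$.
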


\subsection{}\label{sec:zzz}
As mentioned previously, Theorem~\ref{thm:UbUg} was proved by Kumar for finite-dimensional semisimple $\mathfrak{g}$. We now consider the symmetric Kac--Moody algebra case. Our proof has two main ingredients. The first is the following result (Proposition~\ref{prop:excellent-joseph}) on the existence of {\em excellent filtrations} (or {\em Demazure flags}).
When $\lie g$ is finite-dimensional, this was first conjectured by Polo and proved by Polo \cite{P1} and Mathieu \cite{M3} (see also van der Kallen \cite{vdkallen-tifr}). For $\lie g$ an infinite-dimensional Kac--Moody algebra with symmetric GCM, this was established by Joseph \cite{J2}.

\begin{proposition} \label{prop:excellent-joseph} (Polo, Mathieu, Joseph)
    Let $\mathfrak{g}$ be finite-dimensional semisimple or a symmetric Kac--Moody Lie algebra. Let $\lambda, \mu \in P^+$ and $w \in W$. The $\mathfrak{Ub}$-module $v_\lambda\otimes V_w(\mu)$ admits a filtration by $\mathfrak{Ub}$-submodules, whose successive quotients are isomorphic to Demazure modules. That is,
    $$v_\lambda\otimes V_w(\mu)=F_0\supsetneq F_1\supsetneq \cdots \supsetneq F_k = 0$$ such that $$F_{i-1}/F_{i}\cong V_{\sigma_i}(\gamma_i),$$ for some $\sigma_i\in W, \gamma_i\in P^+$ for $1 \leq i \leq k$. 
    Further, these are related to the Kostant--Kumar module by
    $$K(\lambda,w,\mu) \cong \bigoplus_{i=1}^k V(\gamma_i).$$ 
    In particular, for each fixed $\nu \in P^+$, $c_{\lambda\mu}^{\nu}(w) = \#\{1 \leq i \leq k :\, \gamma_i=\nu\}.$
\end{proposition}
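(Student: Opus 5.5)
This result is quoted from the literature (Polo, Mathieu, Joseph), so the plan is to assemble those theorems rather than reprove them from scratch. There are two parts.

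\textbf{Existence of the filtration.} Since $v_\lambda$ spans the one-dimensional $\lie{Ub}$-module $\mathbb{C}_\lambda$, we have $v_\lambda\otimes V_w(\mu)\cong \mathbb{C}_\lambda\otimes V_w(\mu)$. In the finite-dimensional case, this module having an excellent filtration is the theorem of Polo and Mathieu (see also van der Kallen). It follows from the cohomological criterion that a $\lie{Ub}$-module $M$ has a Demazure flag iff $\mathrm{Ext}^1$ of $M$ against all "dual Demazure" (opposite Demazure/van der Kallen) modules vanishes, together with the vanishing of higher cohomology of line bundles on Schubert varieties. In the symmetric Kac--Moody case, Joseph obtains the flag via the positivity and crystal compatibility of Lusztig's canonical basis. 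The key point there is that the subset $b_\lambda\otimes\crys_w(\mu)$ of $\crys(\lambda)\otimes\crys(\mu)$ is a disjoint union of Demazure crystals $\crys_{\sigma_i}(\gamma_i)$, and that this lifts to a filtration of modules because Demazure modules are spanned by subsets of the global basis. I would simply cite these results; reproving them is the main obstacle and lies outside this paper's scope.

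\textbf{Identification of $K(\lambda,w,\mu)$.} Given the filtration, I would compare multiplicities through the restriction map of Theorem~\ref{thm:UbUg}, or more simply through characters. By Proposition~\ref{prop:kkprops}(1), $K=\lie{Ug}(v_\lambda\otimes V_w(\mu))$. The top piece gives the relation $K^+_\nu$ versus $\Hom_{\lie{Ub}}(F_0,V(\nu))$. Each Demazure quotient $V_{\sigma}(\gamma)$ satisfies $\Hom_{\lie{Ub}}(V_\sigma(\gamma),V(\nu))\cong\Hom_{\lie{Ug}}(V(\gamma),V(\nu))$, which is $\delta_{\gamma\nu}$-dimensional. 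This follows from the Joseph--Polo--Mathieu presentation (Proposition~\ref{prop:dempres}): a $\lie{Ub}$-map is determined by the image of $v_{\sigma\gamma}$, which must be an extremal-weight vector killed by the relevant $e_\alpha$-powers, forcing $\nu=\gamma$.

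To go from a filtration to the direct sum decomposition, one needs $\mathrm{Ext}^1_{\lie{Ub}}(V_\sigma(\gamma),V(\nu))=0$. This is again part of the excellent-filtration theory (Polo/Mathieu in finite type, Joseph in the symmetric case). Given it, $\dim\Hom_{\lie{Ub}}(F_0,V(\nu))=\#\{i:\gamma_i=\nu\}$. Combined with the injectivity of $\res$, this gives $\lrw\le \#\{i:\gamma_i=\nu\}$.

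For the reverse inequality I would use Joseph's crystal statement: $\lrw=\#(\crys^\nu_{\lambda\mu}\cap\crys_w(\mu))$, and the highest-weight elements of $b_\lambda\otimes\crys_w(\mu)$ are exactly the heads $b_\lambda\otimes b$ of the Demazure crystals $\crys_{\sigma_i}(\gamma_i)$. This count equals $\#\{i:\gamma_i=\nu\}$. Summing over $\nu$ then yields $K(\lambda,w,\mu)\cong\bigoplus_i V(\gamma_i)$, and the final multiplicity statement is immediate.
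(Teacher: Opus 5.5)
The paper gives no argument for this proposition. It quotes the whole statement, both the existence of the Demazure flag and the identification $K(\lambda,w,\mu)\cong\bigoplus_i V(\gamma_i)$, from Polo and Mathieu (finite type) and Joseph~\cite{J2} (symmetric Kac--Moody). It then uses the proposition as the input for Theorem~\ref{thm:UbUg}.

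You cite the flag in the same way, but you derive the identification from several ingredients: the injectivity of $\res$, the Hom computation of Lemma~\ref{lem:demhom}, the crystal formula \eqref{eq:crysmult}, and Joseph's crystal-level decomposition of $b_\lambda\otimes\crys_w(\mu)$ into Demazure crystals. As an assembly of cited results this is sound. It is not, however, more self-contained than the paper's citation: all the real content of the identification ends up in the cited formula \eqref{eq:crysmult}, which the paper takes from the same sources \cite{J2,KRV}.

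The argument can be streamlined in three ways.
\begin{enumerate}
\item The $\mathrm{Ext}^1$-vanishing is an unnecessary extra input. Left exactness of $\Hom_{\lie{Ub}}(-,V(\nu))$ along the flag, together with Lemma~\ref{lem:demhom}, already gives $\dim\Hom_{\lie{Ub}}(F_0,V(\nu))\le\#\{i:\gamma_i=\nu\}$. That inequality is all you use downstream; it is exactly the telescoping step in the paper's proof of Theorem~\ref{thm:UbUg}.
\item Only the injectivity of $\res$ may be used here. Its surjectivity (Theorem~\ref{thm:UbUg}) is deduced in the paper from this very proposition. You in fact use only injectivity, so you should not frame the step as going ``through Theorem~\ref{thm:UbUg}''.
\item Once you invoke \eqref{eq:crysmult} and the crystal decomposition, you already get the equality $\lrw=\#\{i:\gamma_i=\nu\}$, so the upper-bound half is redundant. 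To make this step complete, say why the $\gamma_i$ of an arbitrary module flag agree with those of the crystal decomposition. One reason is Joseph's construction, which builds the flag from the crystal decomposition. Another is that each $\mathrm{ch}\,V_\sigma(\gamma)$ has unique minimal weight $\sigma\gamma$, occurring with multiplicity one. Hence finitely many distinct Demazure characters are linearly independent, and the multiset $\{\sigma_i\gamma_i\}$ (and so $\{\gamma_i\}$) is determined by $\mathrm{ch}(v_\lambda\otimes V_w(\mu))$.
\end{enumerate}
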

We also need the following lemma concerning homomorphisms of Demazure modules. We will find   it convenient to denote $V_{\infty}(\mu):=V(\mu).$
\begin{lemma}\label{lem:demhom}
 Let $\mathfrak g$ be a symmetrizable Kac--Moody Lie algebra. Let $\lambda, \mu \in P^+$ and $\sigma \in W, \tau \in W\cup\{\infty\}$. Then 
\begin{equation*}
     \dim \Hom_{\,\mathfrak{Ub}}(V_\sigma(\lambda),V_\tau(\mu))=
    \begin{cases}
      1 & \text{ if } \lambda=\mu \text{ and } (\sigma\leq \tau \text{ in } W/W_\lambda, \text{ or } \tau=\infty)\\
      0       & \text{otherwise}
    \end{cases}
  \end{equation*}
\end{lemma}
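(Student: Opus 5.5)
A $\lie{Ub}$-map out of $V_\sigma(\lambda)$ is determined by where it sends the generator $v_{\sigma\lambda}$, so the plan is to pin down that image using the presentation of Proposition~\ref{prop:dempres} together with weight considerations.

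\emph{Reduction.} Since $V_\sigma(\lambda)=\lie{Ub}\,v_{\sigma\lambda}\cong \lie{Ub}/I_\sigma$ by Proposition~\ref{prop:dempres}, we get
\[
\Hom_{\lie{Ub}}(V_\sigma(\lambda),V_\tau(\mu))\cong\{x\in V_\tau(\mu)_{\sigma\lambda}: I_\sigma x=0\}.
\]
So we must show that this space is at most one-dimensional, and that it is nonzero exactly in the stated cases.

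\emph{Existence.} Suppose $\lambda=\mu$ and $\sigma\le\tau$ in $W/W_\lambda$ (or $\tau=\infty$). Then $V_\sigma(\lambda)\subseteq V_\tau(\lambda)\subseteq V(\lambda)$, and the inclusion is a nonzero $\lie{Ub}$-map. Hence the dimension is at least $1$ in these cases.

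\emph{Forcing $\lambda=\mu$.} Let $x\ne0$ be as above. The key step is to show that $x$ generates an irreducible $\lie{g}$-module of highest weight $\lambda$ sitting inside $V(\mu)$; irreducibility of $V(\mu)$ then forces $\lambda=\mu$.
\begin{itemize}
\item The generators of $I_\sigma$ give $e_\alpha^{p_\alpha(\sigma\lambda)+1}x=0$ for every $\alpha\in\posreal$.
\item Apply Lemma~\ref{lem:sl2string} inside $V(\mu)$, an integrable module, for $\alpha$ simple, so that $\alpha=\alpha_i$. Using the reflection $s_i$, move $x$ to a vector of weight $s_i\sigma\lambda$ whenever $s_i\sigma<\sigma$.
\item Inductively along a reduced word of $\sigma^{-1}$, this produces a nonzero vector $y\in V(\mu)$ of weight $\lambda$ annihilated by all $e_i$.
\item Since $V(\mu)$ is irreducible, $y$ must be a highest weight vector, so $\lambda=\mu$.
\end{itemize}
The cleaner route I would try first: the $\lie{sl}_2$ relations show $x$ is an extremal-type vector, so $x$ lies in the $W$-translate of the extremal weight space. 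Then $\sigma\lambda\in W\mu$, and dominance gives $\lambda=\mu$.

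\emph{Uniqueness and the Bruhat condition.} With $\lambda=\mu$, the weight space $V(\lambda)_{\sigma\lambda}$ is one-dimensional, spanned by $v_{\sigma\lambda}$. Hence $x$ is a multiple of $v_{\sigma\lambda}$, and the dimension is at most $1$. For $\tau\in W$, a nonzero map exists iff $v_{\sigma\lambda}\in V_\tau(\lambda)$. By the standard description of extremal weights of Demazure modules, this holds iff $\sigma\lambda$ is a weight of $V_\tau(\lambda)$, i.e.\ iff $\sigma\le\tau$ in $W/W_\lambda$.

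\emph{Main obstacle.} I expect the hard step to be proving that the existence of $x$ forces $\lambda=\mu$ when $\sigma\lambda$ is not obviously extremal in $V(\mu)$. Here I would replace the argument above by a character argument. The annihilation conditions say $x$ spans a one-dimensional $\lie{b}$-module after twisting, and the image of the map is a quotient of $V_\sigma(\lambda)$. Its $\lie{Ug}$-span $\lie{Ug}\,x=V(\mu)$ is then a quotient of $\lie{Ug}\otimes_{\lie{Ub}}V_\sigma(\lambda)$ lying in $\catoint$. Combining this with Lemma~\ref{lem:gould-injectivity} and Corollary~\ref{cor:hwtVZ} applied to $Z=$ the image, we get $\mu\in\operatorname{wt}(Z)\subseteq\operatorname{wt}V_\sigma(\lambda)\le\lambda$. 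Symmetrically, $x$ generates over $\lie{Ug}$ a module whose only possible top is $V(\lambda)$, giving $\lambda=\mu$.
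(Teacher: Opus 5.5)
Your main argument, the first bullet list, is correct, but it reaches $\lambda=\mu$ by a different mechanism than the paper. The paper proves two opposite inequalities:
\begin{itemize}
\item $\lambda\le\mu$, because $\sigma\lambda$, and hence $\lambda$, is a weight of $V(\mu)$;
\item $\mu\le\lambda$, by raising $\phi(v_{\sigma\lambda})$ to $v_\mu$ inside the irreducible $V(\mu)$ with some word $e_{i_1}\cdots e_{i_k}$. The same word applied to $v_{\sigma\lambda}$ then gives a nonzero vector of weight $\mu$ in $V_\sigma(\lambda)$.
\end{itemize}
You instead raise in the \emph{source}, along a reduced word, from $v_{\sigma\lambda}$ to $v_\lambda$. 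You then use $\mathfrak{sl}_2$ theory in the target to see that the image never vanishes, so $\phi(v_\lambda)$ is a nonzero singular vector of $V(\mu)$.

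To make this airtight, spell out two things. First, nonvanishing at each step needs none of the $I_\sigma$ relations. If $\langle\gamma,\alpha_i^\vee\rangle=-m\le 0$, then $e_i^m$ is injective on $V(\mu)_\gamma$; this is Lemma~\ref{lem:sl2string} with $k=m$. Second, the final vector $y$ being killed by every $e_i$ does not come from the $\mathfrak{sl}_2$ moves. It comes from $y=\phi(c\,v_\lambda)$ with $c\neq0$, together with $\lie{Ub}$-linearity of $\phi$.

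The paper's route is shorter and uses only irreducibility and $W$-stability of weights. Yours yields a little more: $\phi(v_\lambda)\ne0$ forces $\phi$ to be injective, since every nonzero $\lie{Ub}$-submodule of $V(\lambda)$ contains $v_\lambda$. Your treatment of existence (the inclusion), of $\dim\le 1$ via $\dim V(\lambda)_{\sigma\lambda}=1$, and of the Bruhat condition is fine. It makes explicit what the paper leaves implicit.

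The ``main obstacle'' paragraph is unnecessary. The raising argument never needs $\sigma\lambda$ to be known in advance to be extremal in $V(\mu)$. Its final sentence, that $x$ generates a module ``whose only possible top is $V(\lambda)$'', is unjustified as written. If you replace it with the trivial inequality $\lambda\le\mu$ from $W$-stability of weights, that paragraph becomes the paper's proof. The only difference is that Corollary~\ref{cor:hwtVZ}, applied to $Z=\phi(V_\sigma(\lambda))$, stands in for the explicit raising word.
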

\begin{proof}
    If there is a non-zero $\mathfrak{Ub}$-homomorphism $\phi: V_\sigma(\lambda) \to V_\tau(\mu),$ then the generator $v_{\sigma\lambda}$ is mapped to a non-zero element $\phi(v_{\sigma\lambda})$ of weight $\sigma\lambda$ in $V_\tau(\mu)$. This implies that $\sigma\lambda$ is a weight of $V_\tau(\mu) \subset V(\mu).$ Therefore, so is $\lambda$ and hence $\lambda\le \mu.$ For the reverse inequality, observe that $v_\mu$ can be obtained from $\phi(v_{\sigma\lambda})$ by applying a suitable sequence $e_{i_1} e_{i_2} \cdots e_{i_k}$ of raising operators. Applying the same sequence of operators to $v_{\sigma\lambda}$ must therefore give a non-zero vector of weight $\mu$ in $V_{\sigma}(\lambda).$ Hence, $\lambda\ge \mu,$ from which we get $\lambda=\mu.$ Now, for a non-zero map from $V_\sigma(\lambda)$ to $V_{\tau}(\lambda),$ $v_{\sigma\lambda}$ must have a non-zero image and thus, occur as a weight in $V_\tau(\lambda).$ Therefore, $\sigma\le \tau$ in $W/W_\lambda$.
\end{proof}

\subsection{}
We now use Proposition~\ref{prop:excellent-joseph} and Lemma~\ref{lem:demhom}
 to prove Theorem~\ref{thm:UbUg}. Since the restriction map of \eqref{eq:resmap}
is injective, it suffices to prove that $$\dim \Hom_{\,\mathfrak{Ub}}(v_\lambda\otimes V_w(\mu), V(\nu)) \le \dim \Hom_{\,\mathfrak{Ug}}(K(\lambda, w,\mu), V(\nu)).$$ 
Note that both the dimensions are finite. We know by Proposition~\ref{prop:excellent-joseph} that there is a $\mathfrak{Ub}$-filtration $$v_\lambda\otimes V_w(\mu)=F_0\supsetneq F_1\supsetneq \cdots \supsetneq F_k = 0$$ such that $$F_{i-1}/F_{i}\cong V_{\sigma_i}(\gamma_i),$$ for some $\sigma_i\in W, \gamma_i\in P^+$ for $1 \leq i \leq k$.
Now, consider the exact sequence $$0\to F_i\to F_{i-1}\to F_{i-1}/F_i\to 0. $$ Since $\Hom$ is a contravariant left-exact functor, we have a left exact sequence $$0\to \Hom_{\,\mathfrak{Ub}}\left(\frac{F_{i-1}}{F_i},\, V(\nu)\right)\to \Hom_{\,\mathfrak{Ub}}(F_{i-1}, V(\nu)) \to \Hom_{\,\mathfrak{Ub}}(F_{i}, V(\nu))$$ Therefore, for $i=1, \ldots, k$, we have
\begin{equation} \label{eq:telescope}
    \dim \Hom_{\,\mathfrak{Ub}}(F_{i-1}, V(\nu)) - \dim \Hom_{\,\mathfrak{Ub}}(F_i, V(\nu)) \leq \dim \Hom_{\,\mathfrak{Ub}}\left(\frac{F_{i-1}}{F_i}, V(\nu)\right).
\end{equation}
    Now by Lemma~\ref{lem:demhom}, $\dim \Hom_{\,\mathfrak{Ub}}(F_{i-1}/F_i, V(\nu))= \dim \Hom_{\,\mathfrak{Ub}}(V_{\sigma_i}(\gamma_i), V(\nu)) = 1 \text{ or } 0,$ depending on whether or not $\gamma_i = \nu$.
Appealing to Proposition~\ref{prop:excellent-joseph} again, $$\sum_{i=1}^k \dim \Hom_{\,\mathfrak{Ub}}\left(\frac{F_{i-1}}{F_i}, V(\nu)\right) = \#\{1 \leq i \leq k :\, \gamma_i=\nu\} = \dim \text{Hom}_{\,\mathfrak{Ug}}(K(\lambda, w,\mu), V(\nu)).$$
Adding the inequalities of \eqref{eq:telescope} for $i=1, \ldots, k$ completes the proof of Theorem~\ref{thm:UbUg}.

\subsection{}
We extract the essential idea of the proof of Theorem~\ref{thm:UbUg} to formulate the following lemma. This applies to more general situations where excellent filtrations with nice properties may exist. This lemma is inspired by Lemma 9.6 of Kac's book \cite{Kac}.
\begin{lemma}\label{lem:UbUg-seed}
    Let $\mathfrak{g}$ be a symmetrizable Kac--Moody algebra. Let $V \in \catoint$ and let $Z$ be a (possibly infinite-dimensional) $\lie{Ub}$-submodule of $V$ such that $Z$ generates $V$ as a $\lie{Ug}$-module. For each fixed $\nu \in P^+$, suppose the following conditions hold:
    \begin{enumerate}
        \item $Z$ admits an {\em excellent} $\nu$-{\em filtration}, i.e.,  
    a finite descending chain of $\mathfrak{Ub}$-submodules:
    $$ Z = F_0\supsetneq F_1\supsetneq \cdots  \supsetneq  F_k =0$$ 
    and a subset $J \subset \{1, 2, \ldots, k\}$ such that: 
    \begin{enumerate}
        \item if $j \in J$, $F_{j-1}/F_{j}\cong V_{\sigma_j}(\gamma_j)$ for some $\sigma_j\in W, \gamma_j\in P^+$, 
        \item if $j \notin J$, $\left(F_{j-1}/F_{j}\right)_\nu =0$. 
    \end{enumerate}
    \item  $\#\{j \in J:\, \gamma_j=\nu\} \leq \dim \Hom_{\,\mathfrak{Ug}} (V, V(\nu))$.
        \end{enumerate} 
    Then the restriction map
     $\res:  \Hom_{\,\mathfrak{Ug}} (V, V(\nu)) \to \Hom_{\,\mathfrak{Ub}} (Z, V(\nu))$ is an isomorphism of vector spaces for each $\nu \in P^+$.
\end{lemma}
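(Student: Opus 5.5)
Fix $\nu \in P^+$. The argument follows the proof of Theorem~\ref{thm:UbUg} step for step. Since $Z$ generates $V$ as a $\lie{Ug}$-module, a $\lie{Ug}$-map $V \to V(\nu)$ is determined by its restriction to $Z$. Hence $\res$ is injective, and it suffices to prove the dimension inequality
\[\dim \Hom_{\,\lie{Ub}}(Z, V(\nu)) \le \dim \Hom_{\,\lie{Ug}}(V, V(\nu)).\]
The right side is finite: $V \in \catoint$ is a direct sum of irreducibles, and the multiplicity of $V(\nu)$ is bounded by $\dim V_\nu$. The left side will be shown to be finite by the bound established below.

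For each $j$, apply the left exact functor $\Hom_{\,\lie{Ub}}(-, V(\nu))$ to $0 \to F_j \to F_{j-1} \to F_{j-1}/F_j \to 0$. This gives
\[\dim \Hom_{\,\lie{Ub}}(F_{j-1}, V(\nu)) \le \dim \Hom_{\,\lie{Ub}}(F_j, V(\nu)) + \dim \Hom_{\,\lie{Ub}}(F_{j-1}/F_j, V(\nu)).\]
Summing the telescoping inequalities from $j=1$ to $k$, and using $F_k = 0$, yields
\[\dim \Hom_{\,\lie{Ub}}(Z, V(\nu)) \le \sum_{j=1}^k \dim \Hom_{\,\lie{Ub}}(F_{j-1}/F_j, V(\nu)).\]
For $j \in J$, Lemma~\ref{lem:demhom} with $\tau = \infty$ shows that the $j$-th summand is $1$ if $\gamma_j = \nu$ and $0$ otherwise. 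The remaining task, and the only genuinely new step compared with Theorem~\ref{thm:UbUg}, is to show that the summand vanishes for $j \notin J$.

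So let $j \notin J$ and put $Q = F_{j-1}/F_j$. Suppose $\phi: Q \to V(\nu)$ is a nonzero $\lie{Ub}$-map. Its image $\phi(Q)$ is a nonzero $\lie{Ub}$-submodule of $V(\nu)$. Take a nonzero weight vector $x$ in $\phi(Q)$; since $V(\nu)$ is a weight module, such an $x$ exists. Because $v_\nu$ spans the unique highest weight line and the weights of $V(\nu)$ are bounded above by $\nu$, applying suitable raising operators to $x$ produces the vector $v_\nu$, up to a scalar. Concretely, pick $x$ of maximal weight in $\phi(Q)$. Then $e_i x = 0$ for all $i$, so $x$ is a highest weight vector in the irreducible module $V(\nu)$, and therefore $x$ lies in $\mathbb{C}v_\nu$.

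The maximal weight exists because $Q$ is a $\lie{Ub}$-module inside a subquotient of $Z$, and all weights of $V$ lie below finitely many maxima. Thus $\nu$ is a weight of $\phi(Q)$. Since $\phi$ is $\lie{h}$-equivariant, it maps $Q_\nu$ onto $\phi(Q)_\nu \ne 0$, so $Q_\nu \ne 0$. This contradicts hypothesis (b). Hence $\Hom_{\,\lie{Ub}}(Q, V(\nu)) = 0$ for $j \notin J$.

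Combining the two cases gives
\[\dim \Hom_{\,\lie{Ub}}(Z, V(\nu)) \le \#\{j \in J : \gamma_j = \nu\} \le \dim \Hom_{\,\lie{Ug}}(V, V(\nu)),\]
where the last inequality is hypothesis (2). Together with the injectivity of $\res$, this proves that $\res$ is an isomorphism. The main obstacle is the vanishing step for $j \notin J$, which rests on the fact that every nonzero $\lie{Ub}$-submodule of $V(\nu)$ contains $v_\nu$. Should the existence of a maximal weight in $\phi(Q)$ cause any difficulty, I would instead use that $\lie{Un}^+$ acts locally nilpotently on the integrable module $V(\nu)$. Then $\lie{Un}^+ x$ is finite-dimensional and contains a nonzero vector annihilated by all $e_i$, which again must be a multiple of $v_\nu$.
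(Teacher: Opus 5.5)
Your proof is correct and follows the route the paper intends: the paper leaves this lemma to the reader as an adaptation of the proof of Theorem~\ref{thm:UbUg}, which consists of injectivity of $\res$, the telescoping left-exactness bound, Lemma~\ref{lem:demhom} for $j\in J$, and then hypothesis (2). Your argument that $\Hom_{\lie{Ub}}(F_{j-1}/F_j, V(\nu))=0$ for $j\notin J$ (take a maximal-weight vector in the image; it must lie in $\mathbb{C}v_\nu$) correctly supplies the one step the paper leaves implicit, though in your fallback remark the finite-dimensionality of $\lie{Un}^+x$ comes from the weights of $V(\nu)$ being bounded by $\nu$ with finite-dimensional weight spaces, not from local nilpotence alone.
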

 We leave the proof to the interested reader - it closely follows that of Theorem~\ref{thm:UbUg}. \qed
 
\subsection{}
We use Lemma~\ref{lem:UbUg-seed} to establish the following "limiting" version of Theorem~\ref{thm:UbUg}. When $\lie g$ is finite dimensional semisimple, Proposition~\ref{prop:invlim} below coincides with Theorem~\ref{thm:UbUg} with $w=w_0$. For  infinite-dimensional $\lie g$, this may be viewed as the "limit" of Theorem~\ref{thm:UbUg} as "$w \to \infty$".

\begin{proposition}\label{prop:invlim}
    Let $\mathfrak{g}$ be a finite dimensional semisimple Lie algebra or a symmetric Kac--Moody algebra and let $\lambda, \mu, \nu \in P^+$. Then the restriction map $$\res: \Hom_{\,\mathfrak{Ug}}(V(\lambda)\otimes V(\mu), V(\nu))\cong  \Hom_{\,\mathfrak{Ub}}(v_\lambda\otimes V(\mu), V(\nu))$$ is an isomorphism of vector spaces.
\end{proposition}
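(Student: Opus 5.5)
We apply Lemma~\ref{lem:UbUg-seed} with $V = V(\lambda)\otimes V(\mu)$ and $Z = v_\lambda\otimes V(\mu)$. As noted in the proof of Proposition~\ref{prop:pi0-image}, $Z$ is $\lie{Ub}$-stable and generates $V$ over $\lie{Ug}$. So the task is to produce, for each fixed $\nu\in P^+$, an excellent $\nu$-filtration of $Z$ that satisfies condition (2). Injectivity of $\res$ is automatic because $Z$ generates $V$. In the finite-dimensional case $V(\mu)=V_{w_0}(\mu)$ and $V = K(\lambda,w_0,\mu)$, so the statement is Theorem~\ref{thm:UbUg} with $w=w_0$; the real work is the infinite-dimensional symmetric case.

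\textbf{Step 1: truncate to a finite-dimensional piece containing the $\nu$-weight space.} Fix $\nu$. The weight space $V(\mu)_{\nu-\lambda}$ is finite-dimensional, and $V(\mu)=\bigcup_{w} V_w(\mu)$ is an increasing union. Hence there is $w\in W$ with $V_w(\mu)_{\nu-\lambda} = V(\mu)_{\nu-\lambda}$, and the same then holds for every $w'\ge w$. We use the filtration
\[ Z \supseteq v_\lambda\otimes V_w(\mu) \supseteq 0. \]
The bottom piece $v_\lambda\otimes V_w(\mu)$ is refined by the excellent filtration of Proposition~\ref{prop:excellent-joseph}, all of whose steps lie in $J$. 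The top quotient $Z/(v_\lambda\otimes V_w(\mu)) \cong v_\lambda\otimes\bigl(V(\mu)/V_w(\mu)\bigr)$ has zero $\nu$-weight space by the choice of $w$, so it can serve as the single step not in $J$. Lemma~\ref{lem:UbUg-seed} asks for a finite chain of $\lie{Ub}$-submodules with possibly infinite-dimensional quotients, and this chain qualifies.

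\textbf{Step 2: verify condition (2).} By Proposition~\ref{prop:excellent-joseph}, $\#\{j\in J:\gamma_j=\nu\} = c^\nu_{\lambda\mu}(w)$. Since $K(\lambda,w,\mu)\subseteq V$ and $V$ is completely reducible in $\catoint$, we have $c^\nu_{\lambda\mu}(w)\le c^\nu_{\lambda\mu} = \dim\Hom_{\lie{Ug}}(V,V(\nu))$. Lemma~\ref{lem:UbUg-seed} then yields the isomorphism.

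\textbf{Main obstacle.} The delicate point is checking that the unstated proof of Lemma~\ref{lem:UbUg-seed} really handles the non-$J$ quotient. The key observation is that any $\lie{Ub}$-map from a module $M$ with $M_\nu = 0$ into $V(\nu)$ vanishes. Indeed, $\lie{n}^+$ acts locally nilpotently on $M$ here, since $M$ is a quotient of the integrable module $v_\lambda\otimes V(\mu)$. So the image of any $\lie{Ub}$-map $M\to V(\nu)$ is a $\lie{Ub}$-submodule of $V(\nu)$ on which $\lie{n}^+$ acts locally nilpotently, and every nonzero $\lie{Ub}$-submodule of $V(\nu)$ contains the highest weight line $v_\nu$. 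As $M_\nu=0$, the image has no $\nu$-weight vector, so the map is zero and this term contributes $0$ to the telescoping inequality \eqref{eq:telescope}.

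A second point to check is the finiteness of $\dim\Hom_{\lie{Ub}}(Z,V(\nu))$. It follows because a $\lie{Ub}$-map from the generated-by-$Z$ setting is determined by the image of $Z_\nu$-relevant data. More simply, it follows from the telescoping bound itself, which yields $\dim\Hom_{\lie{Ub}}(Z,V(\nu))\le c^\nu_{\lambda\mu}(w) \le c^\nu_{\lambda\mu}$, and combining with injectivity forces equality throughout.
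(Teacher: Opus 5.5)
Your proposal is correct and follows the paper's proof essentially verbatim: choose $w$ with $V_w(\mu)_{\nu-\lambda}=V(\mu)_{\nu-\lambda}$, prepend the step $Z\supseteq v_\lambda\otimes V_w(\mu)$ (whose quotient has no $\nu$-weight) to the excellent filtration of Proposition~\ref{prop:excellent-joseph}, and use $\#\{j\in J:\gamma_j=\nu\}=c^\nu_{\lambda\mu}(w)\le c^\nu_{\lambda\mu}$ to invoke Lemma~\ref{lem:UbUg-seed}. Your check that the non-$J$ step contributes nothing is right, although local nilpotency is not needed: irreducibility of $V(\nu)$ alone forces every nonzero $\lie{Ub}$-submodule to contain $v_\nu$. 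Finiteness of the Hom spaces does follow by induction up the filtration from $F_k=0$, as you say in your second justification.
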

\begin{proof}
    We may assume $\dim \lie g = \infty$. Since $V_\sigma(\mu) \subset V_\tau(\mu)$ if $\sigma \leq \tau$ and $V(\mu)= \cup_{\sigma \in W} V_\sigma(\mu)$, it follows that there exists $w \in W$ such that $V(\mu)_{\nu-\lambda} = (V_w(\mu))_{\nu-\lambda}$. 

    Consider $F_1 := v_\lambda \otimes V_w(\mu)$. By Proposition~\ref{prop:excellent-joseph},  it admits an excellent filtration $F_1 \supset F_2 \supset \cdots \supset F_k=0$ with $F_{i-1}/F_i \cong V_{\sigma_i}(\gamma_i)$ for $i \geq 2$, where $\sigma_i \in W$ and $\gamma_i \in P^+$. Let $Z :=v_\lambda \otimes V(\mu)$ and $V = V(\lambda) \otimes V(\mu)$; clearly $Z$ generates $V$ as a $\lie{Ug}$-module. Now $Z/F_1 \cong v_\lambda \otimes (V(\mu)/V_w(\mu))$. By our choice of $w$, $Z/F_1$ does not have $\nu$ as a weight. Defining $F_0 = Z$, we thus obtain the filtration
    \[ Z = F_0 \supset F_1 \supset \cdots \supset F_k=0\]
which satisfies all the properties of a $\nu$-excellent filtration as stipulated in Lemma~\ref{lem:UbUg-seed} with $J=\{2, 3, \cdots, k\}$. We further have from Proposition~\ref{prop:excellent-joseph} that $\#\{j \in J: \gamma_j = \nu\} = \lrw$. Since $\lrw \leq \lr = \dim \Hom_{\,\mathfrak{Ug}} (V, V(\nu))$, the final condition of Lemma~\ref{lem:UbUg-seed} is also satisfied, and this completes the argument.
\end{proof}

\subsection{}
Recall the definition of the left ideal $J$ of $\mathfrak{Ug}$ from Theorem~\ref{thm:gen-rel}. We consider the corresponding left ideal $J'$ of $\mathfrak{Ub}$ generated by:
\begin{enumerate}
    \item $e_\alpha^{\,p_\alpha (w\mu)+1}, \;\; \alpha \in \posreal$.
        \item ${\lie g}_\beta, \;\; \beta \in \posimag$.
        \item $ h - \langle \lambda + w\mu, h \rangle 1, \;\; h \in \mathfrak{h}$.
    \end{enumerate}
We now have the following relationship between the $\mathfrak{Ug}$-module $\mathfrak{Ug}/J$ and the $\mathfrak{Ub}$-module $\mathfrak{Ub}/J'$:
\begin{proposition}\label{prop:ugj-ubjprime}
    \begin{enumerate}
        \item $\mathfrak{Ub}/J'$ is isomorphic to $v_\lambda \otimes V_w(\mu)$ as $\mathfrak{Ub}$-modules.
        \item There is a surjective $\mathfrak{Ug}$-module map $\pi: \mathfrak{Ug}/J \to K(\lambda,w,\mu)$.
        \item The natural map $\phi: \mathfrak{Ub}/J' \to \mathfrak{Ug}/J$ is an injection and thus $J' = J \cap \mathfrak{Ub}$.
        \item Let $\nu \in P^+$. The restriction map
        \begin{equation}\label{eq:resmapcap}
\Res: \Hom_{\,\mathfrak{Ug}} (\mathfrak{Ug}/J, V(\nu)) \to \Hom_{\,\mathfrak{Ub}} (\mathfrak{Ub}/J', V(\nu))            
        \end{equation}
        is an isomorphism (of vector spaces).
    \end{enumerate}
\end{proposition}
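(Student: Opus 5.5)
We prove the four assertions in order, each using the previous ones.

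For (1), observe that $v_\lambda$ is annihilated by $\lie n^+$ and $h$ acts on it by $\langle\lambda,h\rangle$. Hence the map $u\mapsto u\cdot(v_\lambda\otimes v_{w\mu})$ from $\lie{Ub}$ to $v_\lambda\otimes V_w(\mu)$ is surjective. Indeed, for $u\in\lie{Un}^+$ the coproduct gives $u(v_\lambda\otimes x)=v_\lambda\otimes ux$, so the image is $v_\lambda\otimes \lie{Ub}\,v_{w\mu}$. Moreover, $v_\lambda\otimes V_w(\mu)\cong \mathbb{C}_\lambda\otimes V_w(\mu)$ as $\lie{Ub}$-modules. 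Tensoring with the character $\mathbb{C}_\lambda$ shifts the Cartan relations by $\lambda$ and leaves the $\lie n^+$-relations untouched. So the annihilator is obtained from the ideal $I$ of Proposition~\ref{prop:dempres} by replacing $h-\langle w\mu,h\rangle$ with $h-\langle\lambda+w\mu,h\rangle$. This ideal is exactly $J'$.

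For (2), it suffices to check that every generator of $J$ kills $v_\lambda\otimes v_{w\mu}$.
\begin{itemize}
\item The raising generators: $e_\alpha$ acts as $1\otimes e_\alpha$ on these vectors, since $\lie n^+ v_\lambda=0$. So this reduces to Proposition~\ref{prop:dempres} (equivalently, Lemma~\ref{lem:sl2string}, since $p_\alpha(w\mu)$ is the length of the $e_\alpha$-string through $v_{w\mu}$).
\item The imaginary root spaces: these annihilate $v_{w\mu}$ because $w\mu+\beta$ is not a weight, and they annihilate $v_\lambda$.
\item The lowering generators: for $f_\alpha^{N}$ with $N=\langle\lambda,\alpha^\vee\rangle+q_\alpha(w\mu)+1$, expand $f_\alpha^N(v_\lambda\otimes v_{w\mu})=\sum\binom{N}{a}f_\alpha^a v_\lambda\otimes f_\alpha^{N-a}v_{w\mu}$. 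Here $f_\alpha^a v_\lambda=0$ for $a>\langle\lambda,\alpha^\vee\rangle$, because $e_\alpha v_\lambda=0$ (Lemma~\ref{lem:sl2string} with $k=1$). Also $f_\alpha^{b}v_{w\mu}=0$ for $b>q_\alpha(w\mu)$, because $w\mu$ is an extremal weight and so $v_{w\mu}$ sits at an end of its $\alpha$-string. Since $a+b=N$ forces one of the two inequalities, every term vanishes.
\item The Cartan generators hold by weight considerations.
\end{itemize}
The surjection then sends $1+J$ to $v_\lambda\otimes v_{w\mu}$.

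For (3), $J'\subseteq J\cap\lie{Ub}$ is clear, so $\phi$ is well defined. Compose $\phi$ with $\pi$: the composite $\lie{Ub}/J'\to K(\lambda,w,\mu)$ sends $1$ to $v_\lambda\otimes v_{w\mu}$. By (1) it is an isomorphism onto $v_\lambda\otimes V_w(\mu)$, hence injective. Therefore $\phi$ is injective and $J\cap\lie{Ub}=J'$.

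For (4), injectivity of $\Res$ holds because $\lie{Ub}/J'$ contains the cyclic generator, so it generates $\lie{Ug}/J$. For surjectivity, use the commutative square formed by $\pi$ and $\phi$ together with the identification in (1) of $\lie{Ub}/J'$ with $v_\lambda\otimes V_w(\mu)$. Then $\res\circ\pi^*$ equals $\Res$, where $\pi^*$ is pullback along $\pi$ and $\res$ is the map of Theorem~\ref{thm:UbUg}. By Theorem~\ref{thm:UbUg}, every $\lie{Ub}$-map $v_\lambda\otimes V_w(\mu)\to V(\nu)$ extends to a $\lie{Ug}$-map on $K(\lambda,w,\mu)$. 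Pulling back along $\pi$ gives the desired preimage under $\Res$. This step requires $\lie g$ to satisfy the hypotheses of Theorem~\ref{thm:gen-rel}. The main subtlety is the binomial argument in (2) for non-simple real roots: one must confirm that $v_\lambda$ is $e_\alpha$-killed for every positive real $\alpha$ (true, since $\lambda$ is the highest weight) and that $v_{w\mu}$ is extremal along every real root string.
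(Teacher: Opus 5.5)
Your proof is correct, and parts (1)--(3) match the paper's argument. For the lowering generators in (2), the paper simply applies Lemma~\ref{lem:sl2string} to $v_\lambda\otimes v_{w\mu}$ itself, using $\langle w\mu,\alpha^\vee\rangle+p_\alpha(w\mu)=q_\alpha(w\mu)$. Your binomial expansion is valid but not needed, and non-simple roots pose no extra difficulty.

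The genuine difference is the surjectivity in (4). You derive it from Theorem~\ref{thm:UbUg}. This is not circular, since that theorem is proved earlier from excellent filtrations, but it ties (4) to the finite/symmetric hypothesis. The paper argues directly. For $f\in\Hom_{\lie{Ub}}(\lie{Ub}/J',V(\nu))$, the vector $f(\overline{1})$ has weight $\lambda+w\mu$ and is killed by the $e_\alpha^{p_\alpha(w\mu)+1}$ and by ${\lie g}_\beta$. By Lemma~\ref{lem:sl2string}, using $\langle\lambda+w\mu,\alpha^\vee\rangle+p_\alpha(w\mu)=\langle\lambda,\alpha^\vee\rangle+q_\alpha(w\mu)$, it is then killed by every generator of $J$. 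Hence $\overline{1}\mapsto f(\overline{1})$ defines the required $\lie{Ug}$-map.

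This is the same $\mathfrak{sl}_2$ argument as in (2). Your binomial form of it only works inside a tensor product, which hides the fact that it transfers verbatim to $V(\nu)$.

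What the direct route buys is generality. It makes all of Proposition~\ref{prop:ugj-ubjprime} valid for every symmetrizable Kac--Moody algebra. Together with Proposition~\ref{prop:catoint}, the presentation Theorem~\ref{thm:gen-rel} is then equivalent to Theorem~\ref{thm:UbUg} in general, and the symmetric hypothesis enters only through the excellent filtrations. In your version, by contrast, the surjectivity of $\Res$ is a corollary of Theorem~\ref{thm:UbUg}. It also contributes nothing to Section~\ref{sec:proofends}, where injectivity of $\Res$ together with bijectivity of $\res$ already suffices.
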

\begin{proof}
Consider the following commutative diagram, where all indicated maps are the natural homomorphisms between the pairs of modules; while $\pi$ is $\mathfrak{Ug}$-linear, the other three are $\mathfrak{Ub}$-linear.
$$ \begin{tikzcd}
\mathfrak{Ub}/J' \arrow[r, "\phi"] \arrow[d, "\psi" ]
& \mathfrak{Ug}/J \arrow[d, two heads, "\pi"] \\
v_\lambda \otimes V_w(\mu) \arrow[r, hook, "\eta"]
&  K(\lambda,w,\mu)
\end{tikzcd}
$$   
The one-dimensional module $\mathbb{C} v_\lambda$ is acted upon trivially by $\mathfrak{n}^+$ and by $h - \langle \lambda, h \rangle 1$ for all $h \in \mathfrak{h}$. This, together with the presentation for the Demazure module $V_w(\mu)$ (Proposition~\ref{prop:dempres}) readily implies that $\psi$ is an isomorphism. 

The cyclic generator $v_\lambda \otimes v_{w\mu}$ of $K(\lambda,w,\mu)$ is annihilated by all the generators of the left ideal $J$. This is clear for the generators of types 1, 3, 4 (in the statement of Theorem~\ref{thm:gen-rel}). This fact, together with Lemma~\ref{lem:sl2string}, implies that the generators of type 2 also annihilate. Thus we obtain that the map $\pi: \mathfrak{Ug}/J \to K(\lambda,w,\mu)$ which sends $1 \to v_\lambda \otimes v_{w\mu}$ is a surjection. 
Finally, the map $\eta$ is the natural inclusion, and thus injective. 

The commutativity of the diagram and the properties of $\eta, \psi$ imply that $\phi$ is injective. This only leaves the last assertion of Proposition~\ref{prop:ugj-ubjprime} requiring proof.
An element $f \in \Hom_{\,\mathfrak{Ub}} (\mathfrak{Ub}/J', V(\nu))$ is uniquely determined by the image $f(\overline{1}) \in V(\nu)$ of the cyclic generator of $\mathfrak{Ub}/J'$. This element is annihilated by the generators of $J'$, and therefore by Lemma~\ref{lem:sl2string}, also by the generators of $J$. Thus $\overline{1} \mapsto f(1)$ also defines a homomorphism in $\Hom_{\,\mathfrak{Ug}} (\mathfrak{Ug}/J, V(\nu))$. This proves surjectivity of $\Res$. The injectivity of $\Res$ follows from the fact that the image of $\phi$ generates $\mathfrak{Ug}/J$. 
Thus $\Res$ is an isomorphism.
\end{proof}

\subsection{}\label{sec:proofends}
We are now ready to put together the ingredients developed so far to prove Theorem~\ref{thm:gen-rel}. We freely use the notation of the preceding subsections. It is clear from the proof of Proposition~\ref{prop:ugj-ubjprime} that we have a commutative diagram (of vector spaces): 
 $$ \begin{tikzcd}
\Hom_{\,\mathfrak{Ub}}(\mathfrak{Ub}/J', V(\nu)) \arrow[r, leftarrow, "\phi^*"] \arrow[d, leftarrow, "\psi^*" ]
& \Hom_{\,\mathfrak{Ug}}(\mathfrak{Ug}/J, V(\nu)) \arrow[d, leftarrow, "\pi^*"] \\
\Hom_{\,\mathfrak{Ub}}(v_\lambda \otimes V_w(\mu), V(\nu)) \arrow[r, leftarrow, "\eta^*"]
&  \Hom_{\,\mathfrak{Ug}}(K(\lambda,w,\mu), V(\nu))
\end{tikzcd}
$$ 
Since $\psi$ was an isomorphism, so is $\psi^*$. Observe also that the map $\phi^*$ is exactly the map {\em Res} of \eqref{eq:resmapcap} and the map $\eta^*$ is exactly {\em res} of \eqref{eq:resmap}. By Proposition~\ref{prop:ugj-ubjprime} and Theorem~\ref{thm:UbUg}, both {\em Res} and {\em res} are isomorphisms. We conclude that $\pi^*$ must also be an isomorphism. In particular,
\begin{equation}\label{eq:dimhomeq}
\dim \Hom_{\,\mathfrak{Ug}}(K(\lambda,w,\mu), V(\nu)) = \dim \Hom_{\,\mathfrak{Ug}}(\mathfrak{Ug}/J, V(\nu))
\end{equation}
for all $\nu \in P^+$. By Proposition~\ref{prop:catoint}, $\mathfrak{Ug}/J$ is in $\catoint$ and therefore a direct sum of irreducible highest weight $\mathfrak{g}$-modules. Equation~\eqref{eq:dimhomeq} implies that each irreducible module $V(\nu)$ occurs with the same multiplicity in $\mathfrak{Ug}/J$ and $K(\lambda,w,\mu)$. Since $\mathfrak{Ug}/J$ surjects onto $K(\lambda,w,\mu)$, they must in fact be isomorphic $\mathfrak{Ug}$-modules. This completes the proof of Theorem~\ref{thm:gen-rel}. \qed

\section{The tensor envelope of Kostant--Kumar modules}\label{sec:tensenvelope}
In this section, we restrict attention to the case when $\mathfrak{g}$ is a finite-dimensional semisimple Lie algebra.
\subsection{}
We recall the definition of the left ideal $J$ from Theorem~\ref{thm:gen-rel}.
We let $J_s \subset J$ denote the left ideal of $\mathfrak{Ug}$ generated by
\begin{enumerate}
        \item $e_\alpha^{\,p_\alpha (w\mu)+1}, \;\; \alpha$ is a {\em simple root}.
        \item $f_\alpha^{\,\langle \lambda, \alpha^\vee \rangle + q_\alpha (w\mu)+1}, \;\; \alpha$ is a {\em simple root}. 
        \item $ h - \langle \lambda + w\mu, h \rangle 1, \;\; h \in \mathfrak{h}$.
\end{enumerate}
In other words, we only take those generators of $J$ corresponding to the simple roots. Note that  since $\mathfrak{g}$ is finite-dimensional, there are no imaginary roots.

\begin{defn} Let $\mathfrak{g}$ be finite-dimensional semisimple. Let $\lambda, \mu \in P^+$ and $w \in W$.
The {\em tensor envelope}  $T(\lambda, w, \mu)$ of the Kostant--Kumar module $K(\lambda,w,\mu)$ is defined to be the $\mathfrak{Ug}$-module $$T(\lambda,w,\mu) := \mathfrak{Ug}/J_s.$$
\end{defn}

In order to describe the properties of the tensor envelope, we introduce the following notation. Let $\Lambda_i, \, i=1, \ldots, n$ denote the fundamental weights of $\lie g$. Given $\gamma = \sum_{i=1}^n c_i \Lambda_i \in P$, let its positive and negative parts $\gamma^{\pm} \in P^+$ be defined by:
\begin{equation}\label{eq:gammapm}
     \gamma^+ :=\sum_{i: \, c_i >0} c_i \Lambda_i \;\;\;\text{ and }\;\;\;
\gamma^- = -\sum_{i: \, c_i <0} c_i \Lambda_i.
\end{equation}

\begin{lemma}\label{lem:gammapmlem} Let $\gamma \in P$. Then
\begin{enumerate}
    \item $\gamma = \gamma^+ - \gamma^-$.
    \item $q_\alpha(\gamma) \leq \langle \gamma^+, \alpha^\vee \rangle  $ and $p_\alpha(\gamma) \leq \langle \gamma^-, \alpha^\vee \rangle$ for all positive roots $\alpha$.
    \item If $\alpha$ is a {\em simple root}, we have equality: $q_\alpha(\gamma) = \langle \gamma^+, \alpha^\vee \rangle  $ and $p_\alpha(\gamma) = \langle \gamma^-, \alpha^\vee \rangle$.
\end{enumerate}    
\end{lemma}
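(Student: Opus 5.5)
The plan is to reduce everything to elementary facts about the pairing between fundamental weights and coroots. Write $\gamma=\sum_i c_i\Lambda_i$, so $c_i=\langle\gamma,\alpha_i^\vee\rangle$.

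For part (1), we compare coefficients directly. By \eqref{eq:gammapm}, $\gamma^+$ keeps the positive $c_i$ and $\gamma^-$ keeps the negatives of the negative $c_i$. Hence $\gamma^+-\gamma^-=\sum_i c_i\Lambda_i=\gamma$. Both $\gamma^\pm$ have nonnegative coefficients, so they lie in $P^+$.

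For part (2), fix a positive root $\alpha$ and write its coroot as $\alpha^\vee=\sum_i k_i\alpha_i^\vee$ with all $k_i\in\mathbb{Z}_{\ge 0}$. This holds because coroots of positive roots are nonnegative combinations of simple coroots; for real roots, $\alpha^\vee$ is a positive coroot in the dual root system. Then
\[
\langle\gamma^+,\alpha^\vee\rangle=\sum_{i:\,c_i>0}k_ic_i\ \ge\ \sum_i k_ic_i=\langle\gamma,\alpha^\vee\rangle,
\]
and also $\langle\gamma^+,\alpha^\vee\rangle\ge 0$. Taking the maximum of these two lower bounds gives $q_\alpha(\gamma)=\max(0,\langle\gamma,\alpha^\vee\rangle)\le\langle\gamma^+,\alpha^\vee\rangle$.

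The bound for $p_\alpha$ follows the same way. We have $\langle\gamma^-,\alpha^\vee\rangle=-\sum_{i:\,c_i<0}k_ic_i$. This is nonnegative and at least $-\sum_i k_ic_i=-\langle\gamma,\alpha^\vee\rangle$. Alternatively, one can apply the $q$-statement to $-\gamma$, using $(-\gamma)^+=\gamma^-$ and $p_\alpha(\gamma)=q_\alpha(-\gamma)$.

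For part (3), take $\alpha=\alpha_j$ simple, so $\alpha^\vee=\alpha_j^\vee$ and $\langle\Lambda_i,\alpha_j^\vee\rangle=\delta_{ij}$. Then $\langle\gamma^+,\alpha_j^\vee\rangle=\max(c_j,0)=q_{\alpha_j}(\gamma)$ and $\langle\gamma^-,\alpha_j^\vee\rangle=\max(-c_j,0)=p_{\alpha_j}(\gamma)$.

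The only point needing care is the nonnegativity of the expansion of $\alpha^\vee$ in simple coroots. For finite-dimensional $\mathfrak g$, which is the standing assumption of this section, this is standard: $\alpha^\vee=2\alpha/(\alpha,\alpha)$ is a positive root of the dual root system. Everything else is bookkeeping.
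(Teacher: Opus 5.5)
Your proof is correct and follows essentially the same approach as the paper. The paper's proof just says the lemma follows from \eqref{eq:gammapm} and the definitions of $p_\alpha, q_\alpha$; you have written out that bookkeeping, including the one implicit ingredient, namely that $\alpha^\vee$ is a nonnegative combination of the simple coroots whenever $\alpha$ is a positive (real) root.
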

\begin{proof}
This follows from \eqref{eq:gammapm} and the definitions of $p_\alpha, q_\alpha$ in Equations~\eqref{eq:palpha}, \eqref{eq:qalpha}.
\end{proof}

Given $\zeta \in P^+$, we also find it convenient to let $\zeta^* = -w_0(\zeta)$ denote the highest weight of the dual representation $V(\zeta)^*$.
\begin{proposition}\label{prop:tensenv}
Let $\lie g$ be a finite-dimensional semisimple Lie algebra. Fix $\lambda, \mu \in P^+$ and let $w \in W$.
    \begin{enumerate}
        \item There is a surjective $\mathfrak{Ug}$-linear map $\phi_w: T(\lambda,w,\mu) \twoheadrightarrow K(\lambda,w,\mu)$. The map $\phi_w$ is an isomorphism for the two extreme cases $w=1$ and $w=w_0$.
        \item $T(\lambda,w,\mu) \cong V(\lambda + (w\mu)^+) \otimes V((w\mu)^-)^*$.
        \item $c_{\lambda\mu}^\nu(w) \leq c_{\lambda + (w\mu)^+, \, ((w\mu)^-)^*\;}^\nu\;$ for all $\nu \in P^+$.
    \end{enumerate}
\end{proposition}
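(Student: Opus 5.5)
Write $\gamma := w\mu$ and set $A := \lambda + \gamma^+$ and $B := (\gamma^-)^*$, both dominant. The plan is to identify $T(\lambda,w,\mu)$ with the PRV-type presentation of a tensor product and then read off the three assertions.

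\textbf{Part (2).} Choose the tensor product $V(A)\otimes V(\gamma^-)^*$ and the vector $u := v_A \otimes v^*_{-\gamma^-}$. Here $v^*_{-\gamma^-}$ is the lowest weight vector of $V(\gamma^-)^*$, which has weight $-\gamma^-$. Note that $V(\gamma^-)^* \cong V(B)$, and its lowest weight vector has weight $w_0 B$. So the module is $V(A)\otimes V(B)$ and $u = v_A\otimes v_{w_0 B}$. Its weight is $A - \gamma^- = \lambda + \gamma$ by Lemma~\ref{lem:gammapmlem}(1). By Remark~\ref{rem:prvrel} (which follows from Theorem~\ref{thm:gen-rel} with $w=w_0$), the annihilator of $u$ in $\mathfrak{Ug}$ is the left ideal generated by three families of elements:
\begin{enumerate}
\item $h-\langle \lambda+\gamma,h\rangle$;
\item $e_i^{-\langle w_0B,\alpha_i^\vee\rangle+1} = e_i^{\langle \gamma^-,\alpha_i^\vee\rangle+1}$;
\item $f_i^{\langle A,\alpha_i^\vee\rangle+1} = f_i^{\langle\lambda,\alpha_i^\vee\rangle+\langle\gamma^+,\alpha_i^\vee\rangle+1}$.
\end{enumerate}
By Lemma~\ref{lem:gammapmlem}(3), $\langle\gamma^-,\alpha_i^\vee\rangle = p_{\alpha_i}(\gamma)$ and $\langle\gamma^+,\alpha_i^\vee\rangle = q_{\alpha_i}(\gamma)$. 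So these are exactly the generators of $J_s$. Hence $T(\lambda,w,\mu) = \mathfrak{Ug}/J_s \cong K(A,w_0,B) = V(A)\otimes V(B)$, using Proposition~\ref{prop:kkprops}(5). The main bookkeeping to check here is that the lowest weight of $V(\gamma^-)^*$ is indeed $-\gamma^-$, and that Remark~\ref{rem:prvrel} is legitimately available for all finite types. Theorem~\ref{thm:gen-rel} covers all finite-dimensional semisimple $\mathfrak g$, so it is.

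\textbf{Part (1).} Since $J_s\subseteq J$, we get a natural surjection $\mathfrak{Ug}/J_s \twoheadrightarrow \mathfrak{Ug}/J \cong K(\lambda,w,\mu)$ by Theorem~\ref{thm:gen-rel}; call this $\phi_w$.

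For $w=1$, $\gamma = \mu$ is dominant, so $\gamma^+ = \mu$ and $\gamma^- = 0$. Then $T \cong V(\lambda+\mu) = K(\lambda,1,\mu)$, consistent with Remark~\ref{rem:hwtreprel}.

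For $w=w_0$, $\gamma = w_0\mu$ is antidominant, so $\gamma^+ = 0$ and $\gamma^- = \mu^*$. Then $(\gamma^-)^* = \mu$ and $T \cong V(\lambda)\otimes V(\mu) = K(\lambda,w_0,\mu)$.

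In both cases $\phi_w$ is a surjection between finite-dimensional modules of equal dimension, hence an isomorphism. Alternatively, Remarks~\ref{rem:hwtreprel} and~\ref{rem:prvrel} already say that $J_s = J$ in these two cases.

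\textbf{Part (3).} Complete reducibility turns the surjection $\phi_w$ into multiplicity inequalities $[K(\lambda,w,\mu):V(\nu)] \le [T(\lambda,w,\mu):V(\nu)]$. By part (2), the right-hand side is $c^\nu_{A,B}$, which is the stated bound.
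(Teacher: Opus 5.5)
Your proposal is correct and follows the paper's proof essentially step for step. The paper also sets $\gamma_1=\lambda+(w\mu)^+$ and $\gamma_2=-w_0((w\mu)^-)$, and uses Lemma~\ref{lem:gammapmlem}(3) and $\gamma_1+w_0\gamma_2=\lambda+w\mu$ to match the simple-root relations of Remark~\ref{rem:prvrel} for $v_{\gamma_1}\otimes v_{w_0\gamma_2}$ with the generators of $J_s$. It then obtains $\phi_w$ from $J_s\subseteq J$ and handles $w=1,w_0$ via $J_s=J$ from the two Remarks; your alternative dimension count for those extreme cases is also valid, though unlike the paper's argument it depends on part (2).
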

\begin{proof}
Since $J_s \subset J$, there is a surjective $\mathfrak{Ug}$-linear map $\lie{Ug}/J_s \twoheadrightarrow \lie{Ug}/J \cong K(\lambda,w,\mu)$. We let $\phi_w$ denote this map; clearly $\phi_w(1 + J_s) = v_\lambda \otimes v_{w\mu}$. It follows from Remarks \ref{rem:hwtreprel} and \ref{rem:prvrel} that $J_s = J$ when $w=1$ or $w=w_0$. Hence $\phi_w$ is an isomorphism for $w=1, w_0$. This establishes the first part.

Let $\gamma_1 = \lambda + (w\mu)^+$ and $\gamma_2 = -w_0((w\mu)^-))$, i.e., $V(\gamma_2) = V((w\mu)^-)^*$. Let $\alpha$ be a {\em simple root}. Then Lemma~\ref{lem:gammapmlem} implies that: (i) $\langle \gamma_1, \alpha^\vee \rangle = \langle \lambda, \alpha^\vee \rangle + q_\alpha(w\mu)$ and 
\begin{equation}\label{eq:partwo}
    \mathrm{(ii)}\; p_\alpha(w_0\gamma_2) = - \langle w_0\gamma_2, \alpha^\vee \rangle = \langle (w\mu)^-, \alpha^\vee \rangle = p_\alpha(w\mu).
\end{equation}
We also have using \eqref{eq:gammapm} that (iii) $\gamma_1 + w_0 \gamma_2 = \lambda + w\mu$.

Observations (i)-(iii) above together with Remark~\ref{rem:prvrel} imply that 
\[\lie{Ug}/J_s \cong K(\gamma_1, w_0, \gamma_2) \cong V(\lambda + (w\mu)^+) \otimes V((w\mu)^-)^*,\]
establishing the second part. We note here that the image of the coset $1+J_s \in \lie{Ug}/J_s$ under this isomorphism is exactly $v_{\gamma_1} \otimes v_{w_0 \gamma_2}$, i.e., 
\begin{equation}\label{eq:tensenv-canmor}
    \phi_w: V(\gamma_1) \otimes V(\gamma_2) \twoheadrightarrow K(\lambda,w,\mu)  \text{ maps }  v_{\gamma_1} \otimes v_{w_0 \gamma_2} \mapsto v_{\lambda} \otimes v_{w\mu}. 
\end{equation}
Finally, the third part of the Proposition is a simple consequence of the second.
\end{proof}

\begin{rem}
Since $K(\lambda,w,\mu) \subseteq V(\lambda) \otimes V(\mu)$, we have the naive upper bound $\lrw \leq \lr$ for all $w \in W$. The third part of Proposition~\ref{prop:tensenv} gives a $w$-dependent upper bound in terms of tensor product multiplicities, which is often tighter in practice. For instance, when $\lie g = \lie{sl}_3$, 
$\lambda =  \nu = \Lambda_1 + 2\Lambda_2, \,\mu = \Lambda_1 + \Lambda_2$ and $w=s_2s_1$, we have $\lrw = 1$ and $\lr = 2$. The tensor envelope is $V(2\Lambda_1 + 2\Lambda_2) \otimes V(2\Lambda_2)^*$ and it can be easily verified that the multiplicity of $V(\nu)$ in this module is $1$. 
\end{rem}

\begin{rem}
The tensor envelope $T$ of a Kostant--Kumar module $K$ has an interesting universal property - among all tensor products that admit surjections to $K$ (of a special kind), $T$ is the "smallest". This will be discussed in Section \ref{sec:tensenv-univprop} below.
\end{rem}

\begin{rem}
    Gould and Edwards \cite[Theorem 4]{GE} have a more general construction of tensor envelopes (though not named thus) for all cyclic submodules of finite-dimensional $\lie{g}$-modules. Our construction may be viewed as a special case of theirs.
\end{rem}
\section{Schur positivity and canonical morphisms}\label{sec:canmors}
\subsection{}
Let $\lie g$ be a symmetrizable Kac--Moody algebra. If $V$ and $V'$ are modules in $\catoint$, then we say that $V$ is {\em Schur positive} relative to $V'$ (or that $V \ominus V'$ is Schur positive) if there exists a $\mathfrak{Ug}$-linear surjection $V \twoheadrightarrow V'$, or equivalently a $\lie{Ug}$-linear injection $V' \hookrightarrow V$. In finite type $A$, this coincides with the usual notion - namely that the difference of characters of $V$ and $V'$ is a nonnegative integral linear combination of Schur functions.

In type $A$, the Schur positivity of tensor products $V(\lambda_1)\otimes V(\mu_1) \twoheadrightarrow V(\lambda_2)\otimes V(\mu_2)$ was extensively studied in \cite{FFLP, LPP, DP, CFS} under various hypotheses on the highest weights involved.

\subsection{}
Since Kostant--Kumar modules are generalizations of tensor products, Schur positivity is a natural question in the setting of Kostant--Kumar modules as well. 
The following proposition gives a necessary condition for Schur positivity.

\begin{proposition}\label{prop:schurpos-kk-nec}
Let $\lie g$ be a symmetrizable Kac--Moody algebra.    If  $\; K(\lambda_1, w_1, \mu_1) \ominus K(\lambda_2, w_2, \mu_2)\;$ is Schur positive, then \[\lambda_1 + \mu_1 \,\geq\, \lambda_2 + \mu_2 \,\geq\,\overline{\lambda_2 + w_2\mu_2} \,\geq\, \overline{\lambda_1 + w_1\mu_1}.\] 
\end{proposition}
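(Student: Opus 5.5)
The plan is to use Lemma~\ref{lem:supplrw} together with the two extreme cases of Theorem~\ref{thm:kprv}: the Cartan component $V(\lambda_i+\mu_i)$ and the PRV component $V(\overline{\lambda_i+w_i\mu_i})$. Schur positivity means there is a $\lie{Ug}$-injection $K_2 := K(\lambda_2,w_2,\mu_2)\hookrightarrow K_1 := K(\lambda_1,w_1,\mu_1)$. Since both modules lie in $\catoint$ and are finite direct sums of irreducibles, this gives
\[
[K_2:V(\nu)]\le [K_1:V(\nu)] \quad \text{for all } \nu\in P^+.
\]
Consequently, every $\nu$ occurring in $K_2$ also occurs in $K_1$.

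\textbf{Step 1 (outer inequality).} Take $\nu=\lambda_2+\mu_2$. By Lemma~\ref{lem:supplrw} it occurs in $K_2$ with multiplicity one, so it occurs in $K_1$. Lemma~\ref{lem:supplrw} applied to $K_1$ then gives $\lambda_1+\mu_1\ge \lambda_2+\mu_2$.

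\textbf{Step 2 (inner inequality).} Take $\nu=\overline{\lambda_2+w_2\mu_2}$. By Theorem~\ref{thm:kprv} it occurs in $K_2$, hence in $K_1$. Lemma~\ref{lem:supplrw} applied to $K_1$ gives $\overline{\lambda_2+w_2\mu_2}\ge\overline{\lambda_1+w_1\mu_1}$.

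\textbf{Step 3 (middle inequality).} This is Lemma~\ref{lem:supplrw} applied to $K_2$ itself. Since $c^{\nu}_{\lambda_2\mu_2}(w_2)>0$ for $\nu=\lambda_2+\mu_2$ (or for the PRV component), the lemma yields $\lambda_2+\mu_2\ge\overline{\lambda_2+w_2\mu_2}$.

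No step is a genuine obstacle. The only point to check is that an injection of $\lie{Ug}$-modules in $\catoint$ respects multiplicities. This follows from complete reducibility in $\catoint$, or equivalently from the fact that it restricts to an injection $(K_2)^+_\nu\hookrightarrow (K_1)^+_\nu$ on highest weight vectors of each weight $\nu$.
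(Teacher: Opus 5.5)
Your proof is correct and follows essentially the same route as the paper: the injection $K(\lambda_2,w_2,\mu_2)\hookrightarrow K(\lambda_1,w_1,\mu_1)$ carries the Cartan component $V(\lambda_2+\mu_2)$ and the PRV component $V(\overline{\lambda_2+w_2\mu_2})$ into $K(\lambda_1,w_1,\mu_1)$, and Lemma~\ref{lem:supplrw} applied there gives the outer inequalities. The only cosmetic difference is the middle inequality: the paper reads it off from the fact that $\overline{\lambda_2+w_2\mu_2}$ is a weight of $K(\lambda_2,w_2,\mu_2)\subseteq V(\lambda_2)\otimes V(\mu_2)$, while you apply Lemma~\ref{lem:supplrw} to $K(\lambda_2,w_2,\mu_2)$, which amounts to the same thing.
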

\begin{proof}
Let $K_1:=K(\lambda_1, w_1, \mu_1)$ and $K_2:=K(\lambda_2, w_2, \mu_2)$. Since $K_2$ is a submodule of $V(\lambda_2) \otimes V(\mu_2)$, all weights of $K_2$ are $\leq \lambda_2 + \mu_2$. Thus $\lambda_2 + \mu_2 \geq \overline{\lambda_2 + w_2\mu_2}$ since the latter is a weight of $K_2$. It remains to establish the other inequalities.
The Schur positivity hypothesis implies that there is a $\lie{Ug}$-linear injection $K_2:=K(\lambda_2, w_2, \mu_2) \hookrightarrow K(\lambda_1, w_1, \mu_1)=:K_1$. 
The second assertion of Lemma~\ref{lem:supplrw} implies that $V(\lambda_2+\mu_2)$ and $V(\overline{\lambda_2 + w_2\mu_2})$ occur as direct summands of $K_2$; hence they also occur as direct summands of $K_1$. Another application of Lemma~\ref{lem:supplrw} (this time to $K_1$) leads to the desired conclusion. 
\end{proof}

\subsection{}
To find sufficient conditions under which Schur positivity holds for $K(\lambda_1, w_1, \mu_1) \ominus K(\lambda_2, w_2, \mu_2)$, we look for the existence of a $\lie{Ug}$-linear surjection $K(\lambda_1, w_1, \mu_1) \twoheadrightarrow K(\lambda_2, w_2, \mu_2)$. We will in fact construct special surjections ("{\em canonical morphisms}") below under appropriate conditions.

Let $\lie g$ be a symmetrizable Kac--Moody algebra.  For $i=1,2$, let $v_{\lambda_i} \otimes v_{w_i \mu_i}$ denote the cyclic generator of the Kostant--Kumar module $K(\lambda_i,w_i,\mu_i)$  and let its annihilator be $J_i = \ann_{\lie{Ug}}(v_{\lambda_i} \otimes v_{w_i\mu_i})$. The map
\begin{align*} 
K(\lambda_1,w_1,\mu_1) &\to K(\lambda_2,w_2,\mu_2) \\
v_{\lambda_1} \otimes v_{w_1\mu_1} &\mapsto v_{\lambda_2} \otimes v_{w_2\mu_2}
\end{align*}
extends to a well-defined $\lie{Ug}$-linear map on all of $K(\lambda_1,w_1,\mu_1)$ provided $J_1 \subseteq J_2$. When this map exists, we call it the {\em canonical morphism} between these Kostant--Kumar modules. It is uniquely defined (when it exists) up to a nonzero scalar, since the cyclic generators above are only well-defined up to scaling.  
A composition of canonical morphisms is clearly a canonical morphism.
We observe that two Kostant--Kumar modules may have several $\lie{Ug}$-linear morphisms between them, but may not admit a canonical morphism.

Since a canonical morphism is clearly surjective, existence of a canonical morphism 
$K(\lambda_1,w_1,\mu_1) \to K(\lambda_2,w_2,\mu_2)$ implies Schur positivity of 
$K(\lambda_1,w_1,\mu_1) \ominus K(\lambda_2,w_2,\mu_2)$.

\subsection{} If a map $\phi: K(\lambda_1,w_1,\mu_1) \to K(\lambda_2,w_2,\mu_2)$ is a canonical morphism, we depict this fact by the notation \[\phi: K(\lambda_1,w_1,\mu_1) \canmor K(\lambda_2,w_2,\mu_2).\]
\begin{proposition}\label{prop:canmor}
    Let $\lie g$ be a finite-dimensional semisimple Lie algebra or a symmetric Kac--Moody algebra. Let $\lambda_i, \mu_i \in P^+$ and $w_i \in W$ for $i=1, 2$. The following are equivalent:
    \begin{enumerate}
        \item[(i)] There exists a canonical morphism $K(\lambda_1,w_1,\mu_1) \canmor K(\lambda_2,w_2,\mu_2)$.
        \item[(ii)] $\lambda_1 + w_1\mu_1 = \lambda_2 + w_2\mu_2$ and $p_\alpha(w_1\mu_1) \geq p_\alpha(w_2\mu_2)$ for all $\alpha \in \posreal$.
        \item[(iii)] $\lambda_1 + w_1\mu_1 = \lambda_2 + w_2\mu_2$ and $\langle \lambda_1 - \lambda_2, \alpha^\vee \rangle \geq 0$ for all $\alpha \in \posreal$ such that \\ $\langle w_2\mu_2, \alpha^\vee\rangle <0$.
    \end{enumerate}
\end{proposition}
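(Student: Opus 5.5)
The plan is to read off the equivalence from the presentation in Theorem~\ref{thm:gen-rel}. By definition a canonical morphism exists exactly when $J_1\subseteq J_2$, where $J_i=\ann_{\lie{Ug}}(x_i)$ and $x_i:=v_{\lambda_i}\otimes v_{w_i\mu_i}$. By Theorem~\ref{thm:gen-rel}, $J_1$ is the left ideal generated by the four families of elements listed there (with $\lambda_1,w_1,\mu_1$). So (i) is equivalent to: every listed generator of $J_1$ annihilates $x_2$. I will prove (i)$\Leftrightarrow$(ii) this way, and then show (ii)$\Leftrightarrow$(iii) by direct comparison of the inequalities.

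One computation is used in both directions. Fix $\alpha\in\posreal$ and $k\ge 0$. Since $e_\alpha v_{\lambda_2}=0$, we have $e_\alpha^k x_2=v_{\lambda_2}\otimes e_\alpha^k v_{w_2\mu_2}$. The vector $v_{w_2\mu_2}$ is extremal, so it lies at an end of its $\alpha$-string: it is a lowest weight vector for the $\mathfrak{sl}_2$ spanned by $e_\alpha,f_\alpha,\alpha^\vee$ when $\langle w_2\mu_2,\alpha^\vee\rangle<0$, and is killed by $e_\alpha$ otherwise. Hence
\[ e_\alpha^k x_2=0 \iff k\ge p_\alpha(w_2\mu_2)+1. \]

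For (i)$\Rightarrow$(ii), first take the Cartan generators $h-\langle\lambda_1+w_1\mu_1,h\rangle 1\in J_1\subseteq J_2$. Since $x_2\neq 0$ has weight $\lambda_2+w_2\mu_2$, these force $\lambda_1+w_1\mu_1=\lambda_2+w_2\mu_2$. Next, $e_\alpha^{p_\alpha(w_1\mu_1)+1}\in J_2$, and the displayed criterion gives $p_\alpha(w_1\mu_1)\ge p_\alpha(w_2\mu_2)$.

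For (ii)$\Rightarrow$(i), we check the four families of generators of $J_1$ against $x_2$.
\begin{enumerate}
\item The Cartan generators kill $x_2$ because the weights agree.
\item The generators $e_\alpha^{p_\alpha(w_1\mu_1)+1}$ kill $x_2$ by the displayed criterion together with the inequality in (ii).
\item For the imaginary generators ${\lie g}_\beta$, $\beta\in\posimag$: these already lie in $J_2$ by Theorem~\ref{thm:gen-rel} applied to $K_2$. Directly, ${\lie g}_\beta$ kills $v_{\lambda_2}$, and $w_2\mu_2+\beta$ is not a weight of $V(\mu_2)$ because $w_2^{-1}\beta$ is again a positive imaginary root.
\item For the lowering generators, set $\gamma=\lambda_2+w_2\mu_2=\lambda_1+w_1\mu_1$. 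Since $e_\alpha^{p_\alpha(w_1\mu_1)+1}x_2=0$ and $x_2$ is a weight vector of weight $\gamma$ in an integrable module, Lemma~\ref{lem:sl2string} gives $f_\alpha^{\langle\gamma,\alpha^\vee\rangle+p_\alpha(w_1\mu_1)+1}x_2=0$. The identity $\langle w_1\mu_1,\alpha^\vee\rangle+p_\alpha(w_1\mu_1)=q_\alpha(w_1\mu_1)$ shows this exponent equals $\langle\lambda_1,\alpha^\vee\rangle+q_\alpha(w_1\mu_1)+1$, which is exactly the type-2 generator of $J_1$.
\end{enumerate}
Hence $J_1\subseteq J_2$, which is (i).

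For (ii)$\Leftrightarrow$(iii), assume the weight equality, which is common to both and gives $\lambda_1-\lambda_2=w_2\mu_2-w_1\mu_1$. If $\langle w_2\mu_2,\alpha^\vee\rangle\ge 0$, then $p_\alpha(w_2\mu_2)=0$ and the inequality in (ii) holds automatically. If $\langle w_2\mu_2,\alpha^\vee\rangle<0$, then $p_\alpha(w_2\mu_2)=-\langle w_2\mu_2,\alpha^\vee\rangle>0$. So $\max(0,-\langle w_1\mu_1,\alpha^\vee\rangle)\ge p_\alpha(w_2\mu_2)$ is equivalent to $-\langle w_1\mu_1,\alpha^\vee\rangle\ge-\langle w_2\mu_2,\alpha^\vee\rangle$, that is, to $\langle\lambda_1-\lambda_2,\alpha^\vee\rangle\ge0$. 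This is exactly condition (iii).

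The only substantive input is the presentation, Theorem~\ref{thm:gen-rel}, and that is where the hypothesis on $\lie g$ enters. The main point needing care is the exact nilpotency order of $e_\alpha$ on the extremal vector $v_{w_2\mu_2}$ for non-simple real roots $\alpha$. I would obtain it by transporting the $\alpha$-string by $w_2^{-1}$ to the string through $\mu_2$ in direction $w_2^{-1}\alpha$, where it is standard.
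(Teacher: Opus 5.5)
Your proposal is correct and follows the paper's route. You get (i)$\Leftrightarrow$(ii) by testing the generators of $J_1$ from Theorem~\ref{thm:gen-rel} on $v_{\lambda_2}\otimes v_{w_2\mu_2}$, using Lemma~\ref{lem:sl2string} for the lowering generators, and (ii)$\Leftrightarrow$(iii) directly from the definition of $p_\alpha$. You also make explicit a step the paper leaves implicit: $e_\alpha$ kills $v_{\lambda_2}\otimes v_{w_2\mu_2}$ exactly from the power $p_\alpha(w_2\mu_2)+1$ onward, which is what both (i)$\Rightarrow$(ii) and the type-(1) check rely on.
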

\begin{proof}
Since a canonical morphism must map $v_{\lambda_1} \otimes v_{w_1\mu_1} \mapsto v_{\lambda_2} \otimes v_{w_2\mu_2}$, it follows from the relations in Theorem~\ref{thm:gen-rel} that (i) $\Rightarrow$ (ii). 

To show (ii) $\Rightarrow$ (i), we consider the map sending $v_{\lambda_1} \otimes v_{w_1\mu_1} \mapsto v_{\lambda_2} \otimes v_{w_2\mu_2}$. To verify this is well-defined, we need to verify that any element $u \in \lie{Ug}$ which annihilates $v_{\lambda_1} \otimes v_{w_1\mu_1}$ also annihilates $v_{\lambda_2} \otimes v_{w_2\mu_2}$. We can assume that $u$ is an element of one of the forms (1)--(4) of Theorem~\ref{thm:gen-rel}. When $u$ is of form (1) or (4), the desired conclusion follows from our hypothesis. When $u$ is an element of the form (2), this follows from Lemma~\ref{lem:sl2string}, while the conclusion is trivial for $u$ of the form (3).

The equivalence of (ii) and  (iii) is a straightforward consequence of the definition of $p_\alpha$ in Equation~\eqref{eq:palpha}.
\end{proof}

\begin{example}
We consider the affine Lie algebra $\mathfrak{g}=\widehat{\lie{sl}_2}$. Let $\alpha_0, \alpha_1$ denote its simple roots and let $\delta=\alpha_0 + \alpha_1$ be its null root. There exists a canonical morphism 
\[ K(\Lambda_0 + \Lambda_1, \,s_0 s_1 s_0, \,\Lambda_0 + 2\delta) \canmor 
K(\Lambda_1, s_0, 2\Lambda_0) \]
as can be easily verified from Proposition~\ref{prop:canmor}.
\end{example}

\subsection{}
We readily recover from Proposition~\ref{prop:canmor} the following Schur positivity result due to Mathieu \cite[Lemma 2.1]{M2} in the finite-dimensional case:
\begin{proposition}\label{lem:mathieu} (Mathieu)
Let $\lie g$ be a finite-dimensional semisimple Lie algebra and $\lambda_1, \lambda_2, \mu_1, \mu_2 \in P^+$ be such that $\lambda_1 - \lambda_2 = w_0(\mu_2 - \mu_1) \in P^+$. Then there is a surjective map of $\lie{Ug}$-modules $V(\lambda_1) \otimes V(\mu_1) \twoheadrightarrow V(\lambda_2) \otimes V(\mu_2)$, i.e., \[[V(\lambda_1) \otimes V(\mu_1)] \ominus [V(\lambda_2) \otimes V(\mu_2)]\] is Schur positive.
\end{proposition}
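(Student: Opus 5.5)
The plan is to realize the desired surjection as a canonical morphism between two Kostant--Kumar modules of the extreme type $w=w_0$, and then to check the criterion of Proposition~\ref{prop:canmor}. By part (5) of Proposition~\ref{prop:kkprops}, since $\lie g$ is finite-dimensional we have $K(\lambda_i,w_0,\mu_i)=V(\lambda_i)\otimes V(\mu_i)$ for $i=1,2$. It therefore suffices to produce a canonical morphism
\[
K(\lambda_1,w_0,\mu_1)\canmor K(\lambda_2,w_0,\mu_2).
\]
A canonical morphism is automatically surjective, because it maps a cyclic generator onto a cyclic generator, and this gives the Schur positivity claimed in the statement.

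Proposition~\ref{prop:canmor} applies, since $\lie g$ is finite-dimensional semisimple, and I will verify its condition (iii). Set $\zeta:=\lambda_1-\lambda_2=w_0(\mu_2-\mu_1)$; by hypothesis $\zeta\in P^+$. For the first part of (iii), apply $w_0$ to both sides of $\lambda_1-\lambda_2=w_0\mu_2-w_0\mu_1$ after rearranging:
\[
\lambda_1+w_0\mu_1=\lambda_2+w_0\mu_2 ,
\]
which is exactly the required weight equality. For the second part, we need $\langle\lambda_1-\lambda_2,\alpha^\vee\rangle\ge 0$ for every $\alpha\in\posreal$ with $\langle w_0\mu_2,\alpha^\vee\rangle<0$. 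In fact $\langle\zeta,\alpha^\vee\rangle\ge0$ holds for \emph{all} positive roots $\alpha$, because $\zeta$ is dominant and $\alpha^\vee$ is a nonnegative combination of simple coroots. So the inequality holds unconditionally.

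Hence condition (iii) of Proposition~\ref{prop:canmor} is satisfied. That proposition yields the canonical morphism $v_{\lambda_1}\otimes v_{w_0\mu_1}\mapsto v_{\lambda_2}\otimes v_{w_0\mu_2}$, which is the required surjection $V(\lambda_1)\otimes V(\mu_1)\twoheadrightarrow V(\lambda_2)\otimes V(\mu_2)$.

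There is no real obstacle here: all the work is contained in Theorem~\ref{thm:gen-rel}, through Proposition~\ref{prop:canmor}. The only points needing care are the sign bookkeeping with $w_0$ in the weight equality, and noting that dominance of $\zeta$ makes the root condition in (iii) hold for all positive roots, not only those with $\langle w_0\mu_2,\alpha^\vee\rangle<0$.
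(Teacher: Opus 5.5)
Your proposal is correct and follows the paper's proof: you identify both tensor products with $K(\lambda_i,w_0,\mu_i)$ and verify condition (iii) of Proposition~\ref{prop:canmor}, using the dominance of $\lambda_1-\lambda_2$ for the root inequality. One small wording slip: the equality $\lambda_1+w_0\mu_1=\lambda_2+w_0\mu_2$ comes from simply rearranging the hypothesis via linearity of $w_0$, not from applying $w_0$ to both sides.
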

\begin{proof}
The hypothesis ensures $\lambda_1 + w_0(\mu_1) = \lambda_2 + w_0(\mu_2)$ and that $\lambda_1 - \lambda_2 \in P^+$. The condition (iii) of Proposition~\ref{prop:canmor} ensures that there is a canonical morphism $K(\lambda_1,w_0,\mu_1) \canmor K(\lambda_2,w_0,\mu_2)$. This is in particular a surjective $\lie{Ug}$-morphism.
\end{proof}

\subsection{}
 Given $w \in W$, we let \[I(w) :=\{ \alpha\in \posreal:  w^{-1}\alpha \in \posreal[-]\}\]
 denote its set of "inversions". We recall that $|I(w)|$ is the length of $w$. Define
\[ P_w^+ :=\{\lambda \in P: \langle \lambda, \alpha^\vee \rangle \geq 0 \text{ for all } \alpha\in I(w)\}.\]
For $\lie g$ finite-dimensional, we observe that $P_{w_0}^+ = P^+$. 
An interesting extension of Mathieu's Schur positivity result (Proposition~\ref{lem:mathieu}) to the general case is the following:
\begin{proposition}
    \label{lem:mathieu-infinite}
Let $\lie g$ be a finite-dimensional semisimple Lie algebra or a symmetric Kac--Moody algebra. Let $\lambda_1, \lambda_2, \mu_1, \mu_2 \in P^+$ and $w \in W$ be such that $\lambda_1 - \lambda_2 = w(\mu_2 - \mu_1) \in P_w^+$. Then there is a canonical morphism  $K(\lambda_1, w, \mu_1) \canmor K(\lambda_2, w, \mu_2)$ (and hence a surjective map of $\lie{Ug}$-modules). In particular, 
$K(\lambda_1,w,\mu_1) \ominus K(\lambda_2, w, \mu_2)$  is Schur positive.
\end{proposition}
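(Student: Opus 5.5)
We will deduce the statement from the criterion of Proposition~\ref{prop:canmor}, specifically the equivalence (i)$\Leftrightarrow$(iii), applied with $w_1 = w_2 = w$. Then the existence of a canonical morphism $K(\lambda_1,w,\mu_1)\canmor K(\lambda_2,w,\mu_2)$ reduces to two checks. First, $\lambda_1 + w\mu_1 = \lambda_2 + w\mu_2$. Second, $\langle \lambda_1-\lambda_2,\alpha^\vee\rangle \geq 0$ for every $\alpha\in\posreal$ with $\langle w\mu_2,\alpha^\vee\rangle<0$.

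The first condition is immediate: the hypothesis $\lambda_1-\lambda_2 = w(\mu_2-\mu_1)$ rearranges to $\lambda_1 + w\mu_1 = \lambda_2 + w\mu_2$.

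For the second condition, fix $\alpha\in\posreal$ with $\langle w\mu_2,\alpha^\vee\rangle<0$. We will show that $\alpha\in I(w)$. We have $\langle w\mu_2,\alpha^\vee\rangle = \langle \mu_2, (w^{-1}\alpha)^\vee\rangle$. Since $\mu_2\in P^+$, this pairing is $\geq 0$ whenever $w^{-1}\alpha$ is a positive (real) root. So negativity forces $w^{-1}\alpha\in\posreal[-]$, i.e.\ $\alpha\in I(w)$. Here we use that $W$ permutes real roots and that $(w^{-1}\alpha)^\vee = w^{-1}\alpha^\vee$. Since $\lambda_1-\lambda_2\in P_w^+$ by hypothesis, the definition of $P_w^+$ gives $\langle\lambda_1-\lambda_2,\alpha^\vee\rangle\geq 0$, as required.

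Proposition~\ref{prop:canmor} then yields the canonical morphism. It is surjective because it maps a cyclic generator to a cyclic generator, which gives the Schur positivity assertion. The only point needing care is the inversion-set argument, and it is routine; no genuine obstacle is expected. The restriction to finite-dimensional or symmetric $\lie g$ enters solely through Proposition~\ref{prop:canmor}, which depends on Theorem~\ref{thm:gen-rel}.
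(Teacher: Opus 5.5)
Your proposal is correct and matches the paper's proof: you show that any $\alpha$ with $\langle w\mu_2,\alpha^\vee\rangle<0$ lies in $I(w)$ via $\langle w\mu_2,\alpha^\vee\rangle=\langle\mu_2,w^{-1}\alpha^\vee\rangle$, then use $\lambda_1-\lambda_2\in P_w^+$ and apply Proposition~\ref{prop:canmor}(iii). The only difference is that you also write out the check $\lambda_1+w\mu_1=\lambda_2+w\mu_2$, which the paper leaves implicit.
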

\begin{proof}
Suppose $\alpha \in \posreal$ such that $\langle w\mu_2, \alpha^\vee\rangle <0$. Since $\langle w\mu_2, \alpha^\vee\rangle = \langle \mu_2, w^{-1}(\alpha^\vee) \rangle$, we conclude that $\alpha \in I(w)$. For such $\alpha$, our hypothesis ensures $\langle \lambda_1 - \lambda_2, \alpha^\vee \rangle \geq 0$. The desired conclusion now follows from Proposition~\ref{prop:canmor} (iii).
\end{proof}

The following result establishes Schur positivity of differences of Kostant--Kumar modules under more general conditions. It can be viewed as a $\sigma$-twisted version of Proposition~\ref{prop:canmor} for $\sigma \in W$, using the generators of the form $v_{\sigma \lambda} \otimes v_{\sigma w\mu}$ of $K(\lambda,w,\mu)$ provided by Proposition~\ref{prop:kkprops}(2). We leave the straightforward details of the proof to the interested reader (see also \cite{manika-thesis}).

\begin{proposition}\label{prop:sigma}
 Fix $\sigma \in W$. Let $\lambda_1, \lambda_2, \mu_1, \mu_2 \in P^+$ and $w_1, w_2 \in W$. There is a surjective $\lie{Ug}$-linear map $K(\lambda_1,w_1,\mu_1) \twoheadrightarrow K(\lambda_2,w_2,\mu_2)$ if all the following conditions hold: 
    \begin{itemize}
        \item $\lambda_1+w_1\mu_1=\sigma\lambda_2+\sigma w_2\mu_2$
        \item  $p_\alpha(w_1\mu_1)\geq p_\alpha(\sigma w_2\mu_2)$ for  $\alpha\in \Delta_{re}^+$ such that $\sigma^{-1}\alpha\in \Delta_{re}^+$.
        \item $p_\alpha(w_1\mu_1) \geq  q_\alpha(\sigma w_2\mu_2) + \langle\sigma \lambda_2,\alpha^\vee\rangle$ for $\alpha\in \Delta_{re}^+$ such that $\sigma^{-1}\alpha\in \Delta_{re}^-$.  \qed
            \end{itemize} 
            This map sends $v_{\lambda_1} \otimes v_{w_1\mu_1} \mapsto v_{\sigma\lambda_2} \otimes v_{\sigma w_2\mu_2}.$
\end{proposition}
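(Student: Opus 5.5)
The plan is to run the argument of Proposition~\ref{prop:canmor}, but with the second generator replaced by its $\sigma$-translate. By Proposition~\ref{prop:kkprops}(2), the vector $v_{\sigma\lambda_2}\otimes v_{\sigma w_2\mu_2}$ also generates $K(\lambda_2,w_2,\mu_2)$ as a $\lie{Ug}$-module. So it suffices to show that every element of $J_1=\ann_{\lie{Ug}}(v_{\lambda_1}\otimes v_{w_1\mu_1})$ kills $v_{\sigma\lambda_2}\otimes v_{\sigma w_2\mu_2}$. Then $u\,(v_{\lambda_1}\otimes v_{w_1\mu_1})\mapsto u\,(v_{\sigma\lambda_2}\otimes v_{\sigma w_2\mu_2})$ is a well-defined $\lie{Ug}$-map. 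Its image contains a generator, so it is surjective. Here I use the hypothesis implicit in the surrounding section that $\lie g$ is finite-dimensional semisimple or symmetric Kac--Moody. Under that hypothesis, Theorem~\ref{thm:gen-rel} says $J_1$ is generated by the four families listed there, with $(\lambda,w,\mu)=(\lambda_1,w_1,\mu_1)$. It therefore suffices to check each generator on $x:=v_{\sigma\lambda_2}\otimes v_{\sigma w_2\mu_2}$.

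The Cartan generators $h-\langle\lambda_1+w_1\mu_1,h\rangle$ kill $x$ because the weight of $x$ is $\sigma\lambda_2+\sigma w_2\mu_2$, which equals $\lambda_1+w_1\mu_1$ by the first bullet. Imaginary root spaces ${\lie g}_\beta$, $\beta\in\posimag$, kill both $v_{\sigma\lambda_2}$ and $v_{\sigma w_2\mu_2}$: if $\zeta$ is an extremal weight, then $\zeta+\beta$ is not a weight, since $\sigma^{-1}\beta$ is again a positive imaginary root. Hence these generators kill $x$ by the Leibniz rule. For the lowering generators (2), I will use Lemma~\ref{lem:sl2string}: once the raising relations (1) are known to kill $x$, the relation $e_\alpha^{p_\alpha(w_1\mu_1)+1}x=0$ is equivalent to $f_\alpha^{\langle\lambda_1+w_1\mu_1,\alpha^\vee\rangle+p_\alpha(w_1\mu_1)+1}x=0$. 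Since $\langle w_1\mu_1,\alpha^\vee\rangle+p_\alpha(w_1\mu_1)=q_\alpha(w_1\mu_1)$, this is exactly generator (2). So everything reduces to showing $e_\alpha^{p_\alpha(w_1\mu_1)+1}x=0$ for all $\alpha\in\posreal$.

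This raising check is the main step, and it splits according to the sign of $\sigma^{-1}\alpha$. In the $\mathfrak{sl}_2$-triple of $\alpha$, extremal weight vectors are extremal for the string. Thus $e_\alpha^{a+1}v_{\sigma\lambda_2}=0$ with $a=p_\alpha(\sigma\lambda_2)$, and $e_\alpha^{b+1}v_{\sigma w_2\mu_2}=0$ with $b=p_\alpha(\sigma w_2\mu_2)$. By the binomial expansion, $e_\alpha^{a+b+1}x=0$.

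\emph{Case $\sigma^{-1}\alpha>0$.} Then $\langle\sigma\lambda_2,\alpha^\vee\rangle=\langle\lambda_2,\sigma^{-1}\alpha^\vee\rangle\ge0$, so $a=0$. The exponent needed is therefore $b+1=p_\alpha(\sigma w_2\mu_2)+1\le p_\alpha(w_1\mu_1)+1$, by the second bullet.

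\emph{Case $\sigma^{-1}\alpha<0$.} Then $\langle\sigma\lambda_2,\alpha^\vee\rangle\le0$ and $a=-\langle\sigma\lambda_2,\alpha^\vee\rangle$. Naively the exponent $a+b+1$ is required. Instead I will argue via the weight of $x$ and Lemma~\ref{lem:sl2string}. The vector $x$ is killed by $e_\alpha^{a+b+1}$, and by symmetry (applying the same extremality to $f_\alpha$) it is killed by $f_\alpha^{c+d+1}$, where $c=q_\alpha(\sigma\lambda_2)=0$ and $d=q_\alpha(\sigma w_2\mu_2)$. Lemma~\ref{lem:sl2string} converts $f_\alpha^{d+1}x=0$ into $e_\alpha^{k}x=0$ with $k=d+1-\langle\sigma\lambda_2+\sigma w_2\mu_2,\alpha^\vee\rangle$. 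One checks that $k=q_\alpha(\sigma w_2\mu_2)+\langle\sigma\lambda_2,\alpha^\vee\rangle$ up to the $\max(0,\cdot)$ bookkeeping: in the relevant subcases, $d-\langle\sigma w_2\mu_2,\alpha^\vee\rangle$ equals $p_\alpha(\sigma w_2\mu_2)$, giving $k=a+b+1$ again. I expect the delicate point to be exactly this bookkeeping. I will try to match it with the third bullet: the inequality $p_\alpha(w_1\mu_1)\ge q_\alpha(\sigma w_2\mu_2)+\langle\sigma\lambda_2,\alpha^\vee\rangle$ should be precisely what forces $e_\alpha^{p_\alpha(w_1\mu_1)+1}x=0$. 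I will verify this on the string decomposition of $x$ inside $V(\lambda_2)\otimes V(\mu_2)$, restricted to the $\mathfrak{sl}_2$ of $\alpha$, treating $v_{\sigma\lambda_2}$ as a lowest weight vector of its $\alpha$-string.
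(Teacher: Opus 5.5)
\paragraph{Verdict: genuine gap.} Your overall route is the intended one: use Theorem~\ref{thm:gen-rel} for the annihilator $J_1$ of $v_{\lambda_1}\otimes v_{w_1\mu_1}$, and check its generators on $x=v_{\sigma\lambda_2}\otimes v_{\sigma w_2\mu_2}$. Several of your steps are correct:
\begin{itemize}
\item the Cartan and imaginary-root generators;
\item the reduction of the lowering generators to the raising ones via Lemma~\ref{lem:sl2string};
\item the binomial bound $e_\alpha^{a+b+1}x=0$;
\item the case $\sigma^{-1}\alpha>0$.
\end{itemize}

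\paragraph{The gap is the case $\sigma^{-1}\alpha<0$, and it is not bookkeeping.} There $v_{\sigma\lambda_2}$ is an $\alpha$-lowest vector of weight $-a$, with $a=-\langle\sigma\lambda_2,\alpha^\vee\rangle\ge0$. Put $m=\langle\sigma w_2\mu_2,\alpha^\vee\rangle$.
\begin{itemize}
\item If $m\ge0$, then $e_\alpha$ kills $v_{\sigma w_2\mu_2}$. So $e_\alpha^N x=e_\alpha^N v_{\sigma\lambda_2}\otimes v_{\sigma w_2\mu_2}$, and $b=0$.
\item If $m<0$, then $x$ is killed by $f_\alpha$ and has $\alpha$-weight $-(a+b)$, so it is a lowest weight vector.
\end{itemize}
Either way $e_\alpha^{a+b}x\neq0$, so your exponent $a+b+1$ is sharp. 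What you need is therefore
\[
p_\alpha(w_1\mu_1)\ \ge\ a+b\;=\;p_\alpha(\sigma w_2\mu_2)-\langle\sigma\lambda_2,\alpha^\vee\rangle .
\]
Your claimed identification of $k$ with $q_\alpha(\sigma w_2\mu_2)+\langle\sigma\lambda_2,\alpha^\vee\rangle$ is false. Your own formula gives $k-1=a+b$, and $(a+b)-\bigl(q_\alpha(\sigma w_2\mu_2)-a\bigr)=2a-m$. This is positive whenever $m<2a$. So the third bullet, as printed, is strictly weaker than what the argument requires.

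\paragraph{The statement as printed is false.} No argument can close this case. Take $\mathfrak g=\mathfrak{sl}_2$, $\sigma=s$, $\lambda_1=\mu_1=0$, $w_1=1$, $\lambda_2=\mu_2=\Lambda$ (the fundamental weight) and $w_2=s$.
\begin{itemize}
\item First bullet: $\sigma\lambda_2+\sigma w_2\mu_2=-\Lambda+\Lambda=0=\lambda_1+w_1\mu_1$.
\item Second bullet: vacuous.
\item Third bullet: reads $0\ge 1+(-1)$, which holds.
\end{itemize}
Yet $K(0,1,0)\cong\mathbb C$ cannot surject onto $K(\Lambda,s,\Lambda)=V(\Lambda)\otimes V(\Lambda)$, which is $4$-dimensional. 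Concretely, $e_\alpha\in J_1$, but $e_\alpha x=e_\alpha v_{-\Lambda}\otimes v_{\Lambda}\neq0$.

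\paragraph{The correct third bullet.} It should read $p_\alpha(w_1\mu_1)\ge p_\alpha(\sigma w_2\mu_2)-\langle\sigma\lambda_2,\alpha^\vee\rangle$. With $\beta=-\sigma^{-1}\alpha\in\Delta_{re}^+$ this is $p_\alpha(w_1\mu_1)\ge\langle\lambda_2,\beta^\vee\rangle+q_\beta(w_2\mu_2)$, the $\sigma$-twist of the target's type (2) relation.
\begin{itemize}
\item Under this hypothesis your binomial bound finishes the proof at once.
\item The two bullets then merge into one condition: $p_\alpha(w_1\mu_1)\ge p_\alpha(\sigma\lambda_2)+p_\alpha(\sigma w_2\mu_2)$ for all $\alpha\in\Delta_{re}^+$.
\item By the sharpness above, this condition is also necessary for a map sending generator to generator.
\item The $\mathfrak{sl}_4$ example after the proposition survives the correction: for $\alpha=\alpha_2$ the condition reads $3\ge 1+2$.
\end{itemize}
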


\begin{example}
Let $\lie g = \lie{sl}_4$ and let $\Lambda_i$ denote its fundamental weights and $s_i \in W$
be the simple reflections for $i=1, 2, 3$. Then there exists a surjective $\lie{Ug}$-linear map $K(3\Lambda_1+2\Lambda_3, s_2s_1,3\Lambda_1) \twoheadrightarrow K(\rho, s_1, \rho)$ where $\rho = \Lambda_1 + \Lambda_2 + \Lambda_3$ is the Weyl vector. This follows from Proposition~\ref{prop:sigma} by choosing $\sigma = s_2$. 
\end{example} 
 
\subsection{}\label{sec:tensenv-univprop}
In this subsection, we assume that $\lie g$ is a finite dimensional semisimple Lie algebra. We recall the discussion of tensor envelopes and the map $\phi_w: T(\lambda,w,\mu) \twoheadrightarrow K(\lambda,w,\mu)$ from Proposition~\ref{prop:tensenv}  It follows from the proof of Proposition~\ref{prop:tensenv} (Equation \eqref{eq:tensenv-canmor}) that $\phi_w$ is in fact a canonical morphism.
The tensor envelope has the following universal property.
\begin{proposition}\label{prop:tensenv-univprop}
  Let $\lie g$ be a finite-dimensional semisimple Lie algebra. Let $\lambda, \mu \in P^+$ and $w \in W$. Let $\lambda', \mu' \in P^+$. If $\phi: V(\lambda') \otimes V(\mu') \canmor K(\lambda,w,\mu)$ is a canonical morphism, then there exists a unique canonical morphism $\widetilde{\phi}: V(\lambda') \otimes V(\mu') \canmor T(\lambda,w,\mu)$ such that the diagram commutes:
\[\begin{tikzcd}
V(\lambda') \otimes V(\mu') \arrow[d, bulletepi, "\widetilde{\phi}"'] \arrow[r, bulletepi, "\phi"] & K(\lambda,w,\mu) \\
T(\lambda,w,\mu)  \arrow[ur, bulletepi, "\phi_w"']
\end{tikzcd}\]
\end{proposition}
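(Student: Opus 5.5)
Since $\phi$ is canonical, $V(\lambda')\otimes V(\mu')$ is regarded as the Kostant--Kumar module $K(\lambda',w_0,\mu')$ with cyclic generator $v_{\lambda'}\otimes v_{w_0\mu'}$, and $\phi$ sends this generator to $v_\lambda\otimes v_{w\mu}$. By Remark~\ref{rem:prvrel}, the annihilator $J'$ of $v_{\lambda'}\otimes v_{w_0\mu'}$ is generated by $h-\langle\lambda'+w_0\mu',h\rangle 1$, $e_i^{-\langle w_0\mu',\alpha_i^\vee\rangle+1}$ and $f_i^{\langle\lambda',\alpha_i^\vee\rangle+1}$ for simple $\alpha_i$. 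Because $T(\lambda,w,\mu)=\lie{Ug}/J_s$ is cyclic on $1+J_s$, and $\phi_w$ sends $1+J_s$ to $v_\lambda\otimes v_{w\mu}$, a canonical $\widetilde\phi$ with $\phi_w\circ\widetilde\phi=\phi$ must send $v_{\lambda'}\otimes v_{w_0\mu'}\mapsto 1+J_s$ (up to the normalising scalar). This gives uniqueness and commutativity immediately. Everything therefore reduces to showing $J'\subseteq J_s$, which by definition of canonical morphism is existence.

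To prove $J'\subseteq J_s$, I would extract numerical conditions from the existence of $\phi$ and compare generators. Since $\phi$ exists, $J'\subseteq J=\ann(v_\lambda\otimes v_{w\mu})$. By Proposition~\ref{prop:canmor} (with $w_1=w_0$, $w_2=w$), this gives $\lambda'+w_0\mu'=\lambda+w\mu$ and $p_\alpha(w_0\mu')\ge p_\alpha(w\mu)$ for all positive roots $\alpha$; in particular this holds for simple $\alpha_i$. Then:
\begin{itemize}
\item The Cartan generators of $J'$ and $J_s$ coincide.
\item The $e$-generator $e_i^{p_{\alpha_i}(w_0\mu')+1}$ is a left multiple of $e_i^{p_{\alpha_i}(w\mu)+1}\in J_s$, because the exponent is at least as large.
\end{itemize}

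For the $f$-generators I need $\langle\lambda',\alpha_i^\vee\rangle\ge\langle\lambda,\alpha_i^\vee\rangle+q_{\alpha_i}(w\mu)$, so that $f_i^{\langle\lambda',\alpha_i^\vee\rangle+1}$ is a multiple of the $J_s$-generator. Pair the weight identity with $\alpha_i^\vee$ and use $\langle w_0\mu',\alpha_i^\vee\rangle=-p_{\alpha_i}(w_0\mu')$ (as $w_0\mu'$ is antidominant). This gives
\[
\langle\lambda',\alpha_i^\vee\rangle=\langle\lambda,\alpha_i^\vee\rangle+\langle w\mu,\alpha_i^\vee\rangle+p_{\alpha_i}(w_0\mu').
\]
Since $\langle w\mu,\alpha_i^\vee\rangle=q_{\alpha_i}(w\mu)-p_{\alpha_i}(w\mu)$, the right side equals $\langle\lambda,\alpha_i^\vee\rangle+q_{\alpha_i}(w\mu)+\bigl(p_{\alpha_i}(w_0\mu')-p_{\alpha_i}(w\mu)\bigr)$. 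The bracketed term is $\ge0$, which gives the desired inequality.

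Hence $J'\subseteq J_s$, and $\widetilde\phi$ exists; it is surjective because it hits the cyclic generator. The only delicate point I expect is bookkeeping. Canonical morphisms are defined for Kostant--Kumar modules, so I would invoke Proposition~\ref{prop:tensenv}(2) and the isomorphism $T(\lambda,w,\mu)\cong K(\gamma_1,w_0,\gamma_2)$, under which $1+J_s\mapsto v_{\gamma_1}\otimes v_{w_0\gamma_2}$. This makes "canonical" meaningful for $\widetilde\phi$ and identifies $\phi_w$ itself as canonical via \eqref{eq:tensenv-canmor}. Alternatively, the inclusion could be rechecked directly through Proposition~\ref{prop:canmor}(ii) applied to $K(\lambda',w_0,\mu')\to K(\gamma_1,w_0,\gamma_2)$ using \eqref{eq:partwo}.
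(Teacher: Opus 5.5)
Your proof is correct and is essentially the paper's. You extract $\lambda'+w_0\mu'=\lambda+w\mu$ and $p_\alpha(w_0\mu')\ge p_\alpha(w\mu)$ from Proposition~\ref{prop:canmor}, deduce existence of $\widetilde\phi$ from an annihilator inclusion, and get uniqueness and commutativity from cyclicity; the only difference is the final check, where you verify $J'\subseteq J_s$ generator by generator via Remark~\ref{rem:prvrel} (including the explicit $f$-exponent computation), whereas the paper takes the alternative you mention at the end, applying Proposition~\ref{prop:canmor}(ii) to $K(\lambda',w_0,\mu')\to K(\gamma_1,w_0,\gamma_2)$, where \eqref{eq:partwo} and linearity for the antidominant weights $w_0\mu'$, $w_0\gamma_2$ leave only the $p$-inequality to verify.
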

\begin{proof}
   We write $\gamma_1 = \lambda + (w\mu)^+$ and $\gamma_2 = -w_0((w\mu)^-)$ as in the proof of Proposition~\ref{prop:tensenv}. Thus $T(\lambda,w,\mu) = K(\gamma_1,w_0,\gamma_2)$ and $\gamma_1 + w_0 \gamma_2 = \lambda+w\mu$.  

 Since  $V(\lambda') \otimes V(\mu') = K(\lambda',w_0,\mu')$, it follows from Proposition~\ref{prop:canmor} that the existence of the canonical morphism $\phi$ ensures $\lambda'+ w_0\mu' = \lambda+w\mu$ and $p_\alpha(w_0\mu') \geq p_\alpha(w\mu)$ for all positive roots $\alpha$. If we now further specialize $\alpha$ to a simple root, we have from \eqref{eq:partwo} that $p_\alpha(w_0\gamma_2) = p_\alpha(w\mu)$. In conclusion, $p_\alpha(w_0\mu') \geq p_\alpha(w_0\gamma_2)$ for all simple roots $\alpha$, i.e., $- \langle w_0\mu', \alpha^\vee \rangle + \langle w_0\gamma_2, \alpha^\vee \rangle \geq 0$. Since this inequality holds for all simple roots $\alpha$, it also holds for all positive roots by linearity. The existence of $\widetilde{\phi}$ now follows from Proposition~\ref{prop:canmor}. Its uniqueness is obvious.
\end{proof}

It readily follows from this proposition that a canonical morphism of Kostant--Kumar modules induces a canonical morphism of their tensor envelopes. We leave the proof of this statement below to the interested reader.
\begin{proposition}\label{prop:tensenv-canmors}
Let $\lie g$ be a finite-dimensional semisimple Lie algebra. Let $\lambda, \mu, \lambda', \mu' \in P^+$ and $w, w' \in W$. If $\psi: K(\lambda',w',\mu') \canmor K(\lambda,w,\mu)$ is a canonical morphism, then there exists a unique canonical morphism $\widetilde{\psi}: T(\lambda',w',\mu') \canmor T(\lambda,w,\mu)$ such that the diagram commutes:
\[\begin{tikzcd}
T(\lambda',w',\mu') \arrow[d, bulletepi, "\phi'_w"'] \arrow[r, bulletepi, "\widetilde{\psi}"] & T(\lambda,w,\mu) \arrow[d, bulletepi, "\phi_w"] \\
K(\lambda',w',\mu')  \arrow[r, bulletepi, "\psi"'] & K(\lambda,w,\mu)
\end{tikzcd}\]\qed
\end{proposition}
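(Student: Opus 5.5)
The plan is to reduce everything to the criterion of Proposition~\ref{prop:canmor}, which is available because $\lie g$ is finite-dimensional. In the proof of Proposition~\ref{prop:tensenv} we identified the tensor envelopes as Kostant--Kumar modules:
\[
T(\lambda,w,\mu)=K(\gamma_1,w_0,\gamma_2), \qquad \gamma_1=\lambda+(w\mu)^+,\quad \gamma_2=-w_0((w\mu)^-),
\]
with cyclic generator $v_{\gamma_1}\otimes v_{w_0\gamma_2}$ and $\gamma_1+w_0\gamma_2=\lambda+w\mu$. Likewise $T(\lambda',w',\mu')=K(\gamma_1',w_0,\gamma_2')$ with $\gamma_1'+w_0\gamma_2'=\lambda'+w'\mu'$. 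Moreover, by \eqref{eq:partwo}, $p_\alpha(w_0\gamma_2)=p_\alpha(w\mu)$ and $p_\alpha(w_0\gamma_2')=p_\alpha(w'\mu')$ for every simple root $\alpha$.

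To construct $\widetilde\psi$, we check condition (ii) of Proposition~\ref{prop:canmor} for the pair $K(\gamma_1',w_0,\gamma_2')$ and $K(\gamma_1,w_0,\gamma_2)$.

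\textbf{Weight equality.} Since $\psi$ is a canonical morphism, Proposition~\ref{prop:canmor} (i)$\Rightarrow$(ii) gives $\lambda'+w'\mu'=\lambda+w\mu$. Combined with the identities above, this yields $\gamma_1'+w_0\gamma_2'=\gamma_1+w_0\gamma_2$.

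\textbf{Inequality for simple roots.} The same application of Proposition~\ref{prop:canmor} gives $p_\alpha(w'\mu')\ge p_\alpha(w\mu)$ for all positive roots $\alpha$. Restricting to simple $\alpha$ and using \eqref{eq:partwo}, we obtain $p_\alpha(w_0\gamma_2')\ge p_\alpha(w_0\gamma_2)$ for all simple roots.

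\textbf{Inequality for all positive roots.} This is the one step needing care, since $p_\alpha$ is not linear in general. However, $w_0\gamma_2$ and $w_0\gamma_2'$ are antidominant, so for every positive root $\alpha$ we have $p_\alpha(w_0\gamma_2)=-\langle w_0\gamma_2,\alpha^\vee\rangle$, and similarly for $\gamma_2'$. Hence $p_\alpha$ is linear in $\alpha^\vee$ on this pair. Since $\alpha^\vee$ is a nonnegative combination of simple coroots, the simple-root inequality propagates to all positive roots, exactly as at the end of the proof of Proposition~\ref{prop:tensenv-univprop}.

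Proposition~\ref{prop:canmor} then gives a canonical morphism $\widetilde\psi\colon T(\lambda',w',\mu')\canmor T(\lambda,w,\mu)$ sending $1+J_s'\mapsto 1+J_s$.

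Commutativity of the square is checked on the cyclic generator $1+J_s'$. Along the top and right, $1+J_s'$ goes to $1+J_s$ and then to $v_\lambda\otimes v_{w\mu}$. Along the left and bottom, $1+J_s'$ goes to $v_{\lambda'}\otimes v_{w'\mu'}$ and then to $v_\lambda\otimes v_{w\mu}$. The two composites are $\lie{Ug}$-linear and agree on a generator, so they agree everywhere, after fixing the scalar normalizations of the canonical morphisms compatibly.

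Uniqueness holds because any canonical morphism, or any morphism making the square commute with prescribed value on the generator, is determined by the image of $1+J_s'$. Alternatively, one can argue via surjectivity of $\phi'_w$ and the fact that $\widetilde\psi$ must send generator to generator. I expect no real obstacle; the only subtle point is the linearity step for antidominant weights.
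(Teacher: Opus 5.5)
Your proof is correct and is essentially the paper's intended argument. The paper indicates the result follows from the universal property (Proposition~\ref{prop:tensenv-univprop}) applied to the composite canonical morphism $\psi\circ\phi'_{w'}\colon T(\lambda',w',\mu')\cong V(\gamma_1')\otimes V(\gamma_2')\canmor K(\lambda,w,\mu)$, and your check of condition (ii) of Proposition~\ref{prop:canmor} via \eqref{eq:partwo} and linearity on antidominant weights is that proof unpacked (for uniqueness, only the generator-to-generator requirement matters; surjectivity of $\phi'_{w'}$ plays no role, since $\phi_w$ need not be injective).
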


\begin{rem}
One can likewise define the notion of canonical isomorphism. It follows from Proposition~\ref{prop:tensenv-canmors} that a canonical isomorphism of Kostant--Kumar modules induces a canonical isomorphism of their tensor envelopes. However the converse fails; consider for $\lie g = \lie{sl}_3$: $K(0,s_2, \Lambda_1 + \Lambda_2) \not\cong K(2\Lambda_1, s_2s_1, \Lambda_1)$ even as $\lie{Ug}$-modules, while their tensor envelopes are both canonically isomorphic to $V(2\Lambda_1) \otimes V(\Lambda_1)$.
\end{rem}

\printbibliography

\end{document}